\documentclass[11pt,reqno]{amsart}
\usepackage{array,listings,xcolor,tocvsec2}
\usepackage{amssymb,amsmath,amsthm,mathrsfs,enumerate,color,multirow}
\usepackage[justification=centering]{caption}
\usepackage[edge-length=0.5 cm]{dynkin-diagrams}
\usepackage[breaklinks=true]{hyperref}
\usepackage[letterpaper]{geometry}
\usepackage{pgf,tikz,amscd}
\usetikzlibrary{arrows}
\definecolor{codegreen}{rgb}{0,0.6,0}
\definecolor{codegray}{rgb}{0.5,0.5,0.5}
\definecolor{codepurple}{rgb}{0.58,0,0.82}
\definecolor{backcolour}{rgb}{0.95,0.95,0.92}

\lstdefinestyle{mystyle}{
    backgroundcolor=\color{backcolour},   
    commentstyle=\color{codegreen},
    keywordstyle=\color{magenta},
    numberstyle=\tiny\color{codegray},
    stringstyle=\color{codepurple},
    basicstyle=\ttfamily\footnotesize,
    breakatwhitespace=false,         
    breaklines=true,                 
    captionpos=b,                    
    keepspaces=true,                 
    numbers=left,                    
    numbersep=5pt,                  
    showspaces=false,                
    showstringspaces=false,
    showtabs=false,                  
    tabsize=2
}

\lstset{style=mystyle}

\geometry{hmargin={1.2in, 1.2in}, vmargin={1.2in, 1.2in}}

\newcommand{\modulenode}[3]{
\draw(#1,#2) node[circle](#3){$A_{#3}$};
}

\newcommand{\modulenodeA}[3]{
\draw(#1,#2) node[circle](#3){$#3$};
}
\newcommand{\moduleedge}[4]{
\draw[->] (#1) -- (#2) node [midway, #4] {#3};}
\newcommand{\bk}{\ensuremath{\Bbbk}}
\newcommand{\scrm}{\ensuremath{\mathscr{M}}}
\newcommand{\m}{\ensuremath{\mathfrak{m}}}

\newcommand{\R}{\ensuremath{\mathcal{R}}}

\newcommand{\Z}{\ensuremath{\mathbb{Z}}}
\newcommand{\tangle}[1]{\ensuremath{\langle #1 \rangle}}
\newcommand{\nilhecke}[1]{\ensuremath{n\mathcal{H}(W, {\bf d}, J_{#1})}}
\newcommand{\commen}[1]{}

\DeclareMathOperator{\Rep}{\ensuremath{Rep}}

\theoremstyle{plain}
\newtheorem{theorem}[equation]{Theorem}
\newtheorem{cor}[equation]{Corollary}
\newtheorem{lemma}[equation]{Lemma}
\newtheorem{prop}[equation]{Proposition}

\newtheorem{utheorem}{\textrm{\textbf{Theorem}}}

\theoremstyle{definition}
\newtheorem{definition}[equation]{Definition}
\newtheorem{remark}[equation]{Remark}

\numberwithin{equation}{section}
\numberwithin{figure}{section}

\begin{document}
\title[The lattice of nil-Hecke algebras over real and complex reflection
groups]{The lattice of nil-Hecke algebras over\\ real and complex
reflection groups}

\author{Sutanay Bhattacharya}
\address[S.~Bhattacharya]{Indian Institute of Science, Bangalore --
560012, India; B.S.(math) Student}
\email{sutanayb@iisc.ac.in}

\author{Apoorva Khare}
\address[A.~Khare]{Department of Mathematics, Indian Institute of
Science, Bangalore, India and Analysis \& Probability Research Group,
Bangalore, India}
\email{khare@iisc.ac.in}

\date{\today}

\subjclass[2010]{20F55 (Primary), 20F05, 20C08 (Secondary)}

\keywords{Generalized Coxeter group, generalized nil-Coxeter algebra,
nil-Hecke algebra, Temperley--Lieb algebra} 

\begin{abstract}
Associated to every complex reflection group, we construct a lattice of
quotients of its braid monoid-algebra, which we term nil-Hecke algebras,
and which are obtained by killing all braid words that are ``sufficiently
long'', as well as some integer power of each generator. These include
usual nil-Coxeter algebras, nil-Temperley--Lieb algebras, and their
variants, and lead to symmetric semigroup module categories which
necessarily cannot be monoidal.

Motivated by classical work of Coxeter (1957) and the
Brou\'e--Malle--Rouquier freeness conjecture [\textit{J.\ reine Angew.\
Math.}\ 1998], and continuing beyond work of the second author
[\textit{Trans.\ Amer.\ Math.\ Soc.}\ 2018], we obtain a complete
classification of the finite-dimensional nil-Hecke algebras for all
complex reflection groups $W$. These comprise the usual nil-Coxeter
algebras for $W$ of finite type, their ``fully commutative'' analogues
for $W$ of FC-finite type, three exceptional algebras (of types $F_4,
H_3, H_4$), and three exceptional series (of types $B_n$ and $A_n$, two
of them novel). In particular, we find the first -- and only two --
finite-dimensional nil-Hecke algebras over discrete complex reflection
groups; this breaks from the nil-Coxeter case (where no braid words are
further killed, and) where Marin [\textit{J.\ Pure Appl.\ Alg.}\ 2014]
and Khare [\textit{Trans.\ Amer.\ Math.\ Soc.}\ 2018] showed that such
algebras do not exist.

In addition to these algebras, and also algebraic connections (to PBW
deformations and non-monoidal tensor categories), we further uncover
combinatorial bases of algebras, both known (fully commutative elements)
and novel ($\overline{12}$-avoiding signed permutations). Our
classification draws from and brings together results of
Popov [\textit{Comm.\ Math.\ Inst.\ Utrecht} 1982],
Stembridge [\textit{J.\ Alg.\ Combin.} 1996, 1998],
Malle [\textit{Transform.\ Groups} 1996],
Postnikov via Gowravaram--Khovanova (2015),
Hart [\textit{J.\ Group Th.}\ 2017], and
Khare [\textit{Trans.\ Amer.\ Math.\ Soc.}\ 2018].
\end{abstract}
\maketitle

\settocdepth{section}
\tableofcontents

\section{Introduction and main results}

\noindent \textit{Throughout this paper, $\bk$ will denote a fixed unital
commutative ground ring, and by $\dim M$ we mean the $\bk$-rank of a free
$\bk$-module $M$.}\medskip

In this paper, we study a class of graded rings over a given Coxeter
group $W$ (finite or infinite). We term such rings \textit{nil-Hecke
algebras}. Most of the rings that we study turn out to be the associated
graded rings for certain quotients of the corresponding (generic)
Iwahori--Hecke algebras -- which include, for instance, Temperley--Lieb
algebras. Thus, the quotients we study include the corresponding
nil-Coxeter as well as nil-Temperley--Lieb algebras.

Our focus in the present work is on exploring the finite-dimensionality
of these graded rings. In this, we are motivated by connections to
Coxeter group combinatorics, semigroup tensor categories, and flatness of
deformations (which in the complex case is related to the
Brou\'e--Malle--Rouquier (BMR) freeness conjecture). We elaborate on
these connections below.

To introduce nil-Hecke algebras, recall that any Coxeter group $W$ is
accompanied by:
\begin{itemize}
\item an associated braid group $\mathcal{B}_W \twoheadrightarrow W$;
\item a finite index set $I$ as well as a set $\mathbf{S} := \{
\mathbf{s}_i : i \in I \}$ that generates $\mathcal{B}_W$ and yields the
simple reflections under the map $\mathcal{B}_W
\twoheadrightarrow W$; and
\item a finite index set $J$ (in fact, $J \subset \binom{I}{2}$) as well
as relations $\R := \{ \mathbf{v}_j - \mathbf{w}_j : j \in J \}$, where
for each $j$, both sides denote words in $\mathbf{S}$ of equal (and
finite) length,
\end{itemize}

\noindent such that $\langle \mathbf{S} | \R \rangle$ is a presentation
of $\mathcal{B}_W$.
(This presentation was extended by Brieskorn~\cite{Bri}.)

We now define our main object of interest: a lattice of algebras
corresponding to $W$.

\begin{definition}\label{D11}
Fix a Coxeter group $W$, with associated data $I, J, {\bf S}, \R$
(so $J \subset \binom{I}{2}$).
\begin{enumerate}
\item Given $J_0 \subset J$ 
 and an integer tuple ${\bf d} = (d_i)_{i \in I}$ with $1 \leqslant d_i
\leqslant \infty\ \forall i$, define the corresponding \textit{nil-Hecke
algebra} $\nilhecke{0}$ to be the $\bk$-algebra generated by ${\bf S}$
with relations:
\[
{\bf s}_i^{d_i} = 0\ \forall i, \qquad
{\bf v}_j = {\bf w}_j\ \forall j \in J_0, \qquad
{\bf v}_j = 0 = {\bf w}_j\ \forall j \in J \setminus J_0.
\]

\item For $k \geqslant 1$, define $J_{< k} := \{ j \in J :
\ell({\bf v}_j) = \ell({\bf w}_j) < k \}$. Notice that $J_{<1} = J_{<2} =
\emptyset$, so that $\nilhecke{<1} = \nilhecke{<2}$. Also define $J_{<
\infty} := J$.
\end{enumerate}
\end{definition}

\begin{remark}
Here we omit the relation ${\bf s}_i^{d_i} = 0$ if $d_i = \infty$. Also
note that all the relations in $\R$ still hold in $\nilhecke{0}$, but the
emphasis is on the additional relations ${\bf v}_j = 0 = {\bf w}_j$, $j
\in J \setminus J_0$.
\end{remark}

In other words, $\nilhecke{0}$ is a quotient of the Artin monoid-algebra
of $W$ by the order relations ${\bf s}_i^{d_i} = 0$ (see \cite{Pa}). Two
special cases of this construction are:
\begin{itemize}
\item ($k=\infty$.)
The \textit{generalized nil-Coxeter algebra} associated to the above data
is $NC_W({\bf d}) := \nilhecke{< \infty}$.
These algebras were defined and studied in previous work \cite{Kh,Kh2}
(see also the extended abstract \cite{Kh-FPSAC}).
In the special case where 
 $d_i = 2\ \forall i$, $NC_W((2,\dots,2))$ is the ``usual'' nil-Coxeter
algebra associated to $W$, with $J = \binom{I}{2}$. These algebras are
well-studied in connection to flag varieties \cite{BGG,KK},
categorification \cite{Kho,KL}, symmetric function theory \cite{BSS}, and
Schubert polynomials \cite{FS,LS}.

\item ($k=3$.)
We define the \textit{generalized nil-Temperley--Lieb algebra}
associated to the above data to be $NTL_W({\bf d}) := \nilhecke{< 3}$. In
the special case where 
 $d_i = 2\ \forall i$, $NTL_W((2,\dots,2))$ is the ``usual''
nil-Temperley--Lieb algebra associated to $W$. These latter are quotients
of nil-Coxeter algebras as well as associated graded algebras of
Temperley--Lieb algebras (see e.g.~\cite{BM,FG}), and have connections to
fermionic particle configurations and to quantum Schubert calculus
\cite{KS,Pos}. 
 They were defined and studied in \cite{Fan1,Fan2,Gr}. In the 
 special case of the Coxeter (multi)graph of $W$ being simply laced, the
algebras $NTL_W((2,\dots,2))$ were rediscovered by Postnikov as
\textit{XYX algebras} \cite{GK}.
\end{itemize}

For a given group $W$ with presentation as above, the set of nil-Hecke
$\bk$-algebras forms a lattice isomorphic to the product of the power set
$2^J$ with $I$ copies of $\Z^{\geqslant 1} \sqcup \{ \infty \}$. Two
algebras in this lattice are comparable if and only if one surjects onto
the other; the extremal points in the lattice are the source
$\mathcal{B}_W$ (for $J_0 = J$ and $d_i = \infty\ \forall i$) and the
rank-$1$ algebra $\bk$ (whenever $d_i = 1\ \forall i$). For each ${\bf d}
\in (\Z^{\geqslant 1})^I$, the corresponding algebras form a sub-lattice
with unique sink $n\mathcal{H}(W,{\bf d},\emptyset)$ and source the
generalized nil-Coxeter algebra $NC_W({\bf d})$.

We now come to the results in this paper, which classify all nil-Hecke
$\bk$-algebras $\nilhecke{<k}$ for $1 \leqslant k < \infty$ that are
finite-dimensional (for $\bk$ a field, or more precisely, those of finite
$\bk$-rank) of the form $\nilhecke{<k}$ with $1 \leqslant k \leqslant
\infty$. In this, we are motivated by classical work of Coxeter
\cite{Co}, in which he considered quotients of the type $A$ braid group
by the relations $s_i^p = 1\ \forall i \geqslant 1$, and classified the
pairs $(n,p)$ for which the resulting ``generalized Coxeter group'' is
finite (see also \cite{As}). Our work studies the ``Iwahori--Hecke''
analogue of Coxeter's problem -- and in the complex groups case, it
can be thought of as related to the ``Temperley--Lieb'' analogue of the
Brou\'e--Malle--Rouquier freeness conjecture \cite{BMR1,BMR2}.

At the same time, our work also complements (by proving results for all
finite $k$) the work of the second author \cite{Kh2}, who studied the $k
= \infty$ case. Furthermore, we unify under one umbrella several
classical and recent works, including (\cite{Kh2}, as well as) results by
Stembridge \cite{St1,St3,St2}, Gowravaram--Khovanova (who attribute their
construction to Postnikov) \cite{GK}, and Hart \cite{Ha}, and show how
they fit into this picture.

\subsection{Classification of finite-dimensional cases}

We begin with the recent work \cite{Kh2}, which classified all
finite-dimensional objects among the generalized nil-Coxeter algebras
(i.e., corresponding to $k = \infty$). Interestingly, this yields a
``non-usual'' type $A$ family of such algebras -- and it turns out to be
the only one:

\begin{theorem}[\cite{Kh2}]\label{Tinfty}
Fix a Coxeter group $W$ with related
data $I,J,{\bf S},\R$, and integers $d_i \geqslant 2\ \forall i$. The
corresponding nil-Hecke (or generalized nil-Coxeter) algebra
$\nilhecke{<\infty}$ is a finitely generated $\bk$-module if and only if
exactly one of the following occurs:
\begin{enumerate}
\item $W$ is a finite Coxeter group and ${\bf d} = (2,\dots,2)$ (the
``usual'' nil-Coxeter algebras); or
\item $W$ is of type $A$,
and ${\bf d} = (d,2,\dots,2)$ or $(2,\dots,2,d)$ for some $d>2$.
We denote this nil-Hecke algebra by $NC_A(n,d) :=
n\mathcal{H}(S_{n+1}, (d,2,\dots,2), J_{<\infty})$.
\end{enumerate}
\end{theorem}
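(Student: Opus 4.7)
The plan is to split the theorem into two directions: the sufficiency of cases (1) and (2), proved by explicit basis constructions, and the necessity, proved by reducing to rank-two (dihedral) parabolic subalgebras and then to diagram-specific combinatorial rigidity arguments.

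\textbf{Sufficiency.} In case (1) with $W$ a finite Coxeter group and $\mathbf{d} = (2,\dots,2)$, the algebra $\nilhecke{<\infty}$ is the classical nil-Coxeter algebra of $W$, and I would invoke the standard fact (via Matsumoto's theorem and reduced-word exchange) that it is free of $\bk$-rank $|W|$. For case (2), I would build a normal-form spanning set for $NC_A(n,d)$ by exploiting that the only generator with extra nilpotency is $\mathbf{s}_1$, which satisfies the ordinary braid relation $\mathbf{s}_1 \mathbf{s}_2 \mathbf{s}_1 = \mathbf{s}_2 \mathbf{s}_1 \mathbf{s}_2$ and commutes with $\mathbf{s}_j$ for $j \geqslant 3$. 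Linear independence would then follow from constructing a faithful representation of $NC_A(n,d)$ on a polynomial-type $\bk$-module modified to carry $\mathbf{s}_1$-nilpotency of order $d$.

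\textbf{Necessity.} Assume $\nilhecke{<\infty}$ has finite $\bk$-rank. First, further killing $\mathbf{s}_i^2$ for each $i$ produces a surjection onto the classical nil-Coxeter algebra $NC_W((2,\dots,2))$ of $\bk$-rank $|W|$; hence $W$ must be finite. The central reduction is then to the rank-two parabolic subalgebras: for each pair $i \neq j$ with $m_{ij} = m$, the subalgebra generated by $\mathbf{s}_i, \mathbf{s}_j$ is a quotient of $n\mathcal{H}(I_2(m), (d_i,d_j), J_{<\infty})$, and a reduced-word retraction from the parabolic subgroup $W_{ij}$ shows that this dihedral piece itself must be finite-dimensional. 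A direct dihedral calculation -- expanding alternating products and applying the braid relation together with the two nilpotency relations -- then isolates the finite-dimensional rank-two cases as exactly $d_i = d_j = 2$ (any $m \geqslant 2$), or $m = 3$ with exactly one of $d_i, d_j$ equal to $2$.

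From this rank-two list, any vertex $i$ with $d_i > 2$ forces every diagram-neighbor $j$ of $i$ to satisfy $m_{ij} = 3$ and $d_j = 2$; a further rank-three argument rules out two distinct vertices both having $d > 2$, even when non-adjacent. What remains is to eliminate every finite-type Coxeter diagram other than type $A$ with the distinguished vertex at an end, and this last step is the main obstacle. For each of $B_n, D_n, E_6, E_7, E_8, F_4, H_3, H_4$ with $d > 2$ placed at an admissible end, one must exhibit an explicit infinite family of linearly independent words in $\nilhecke{<\infty}$. The cleanest strategy is to isolate an ``affine trap'' near the special vertex -- a sub-pattern of reduced words reminiscent of $\widetilde{A}_2$ or $\widetilde{D}_4$ combinatorics -- iterate it, and apply a Matsumoto-style reduced-word calculus adapted to the extra $\mathbf{s}_1$-nilpotency to show no further collapse is possible. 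The only surviving diagram is then type $A_n$ with $d > 2$ at an end vertex, which by the $A_n$-symmetry is precisely case (2).
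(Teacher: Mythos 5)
This theorem is not proved in the present paper: it is quoted from \cite{Kh2}, and internally it is recovered by combining the surjections $\nilhecke{<\infty}\twoheadrightarrow\nilhecke{<k}$ with the $k=3,4$ classifications (Theorems~\ref{T3}, \ref{T4}), whose infinite-rank halves are established by exhibiting explicit infinite cyclic modules via the diagrammatic calculus, and whose finite-rank halves rest on the word basis of Theorem~\ref{Tdimplus}. Measured against that, your outline has two serious gaps and one misstatement. First, the decisive step of your necessity argument --- eliminating every finite-type diagram other than $A_n$ with the $d>2$ vertex at an end (for instance $D_n$ or $E_6,E_7,E_8$ with $d>2$ at a pendant vertex, or $B_n$ with $d>2$ at the simply-laced end, all of which pass your rank-two test) --- is only gestured at via an unspecified ``affine trap''. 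This is precisely where essentially all of the work lies; in \cite{Kh2}, and in the analogous steps of Sections 3--4 here, it is carried out case by case by writing down an explicit infinite free $\bk$-module whose action is encoded by a directed cycle carrying a ``$+$''. Without such constructions the necessity direction is not proved. Second, your proposed faithful ``polynomial-type'' representation of $NC_A(n,d)$ is the one approach known to break down: the natural Demazure-type deformation $(\partial_1 f)(x)=\bigl(f(x)-f(\zeta_d x)\bigr)/\bigl((1-\zeta_d)x\bigr)$ fails the braid relation for $d>2$ (one computes $\partial_2\partial_1\partial_2(xyz)\neq 0=\partial_1\partial_2\partial_1(xyz)$). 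The representation that does work is the regular-type action on the free module spanned by equivalence classes of admissible words, i.e.\ exactly Theorem~\ref{Tdimplus}; linear independence for $NC_A(n,d)$ should be routed through that, not through difference operators.

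The misstatement: your list of finite-dimensional rank-two algebras omits $m_{ij}=2$ with arbitrary $d_i,d_j\geqslant 2$, which gives $\bk[\mathbf{s}_i]/(\mathbf{s}_i^{d_i})\otimes\bk[\mathbf{s}_j]/(\mathbf{s}_j^{d_j})$, finite of rank $d_id_j$; your later appeal to a ``rank-three argument'' for two non-adjacent vertices with $d>2$ implicitly concedes this, so the dihedral classification as stated is internally inconsistent (though harmless for adjacent pairs). Relatedly, showing that $I_2(m)$ with $m\geqslant 4$ and some $d_i>2$, or $m=3$ with both $d_i>2$, is infinite-dimensional cannot be done by ``expanding alternating products and applying the relations'': relation-pushing only bounds the rank from above, and you again need a lower bound via an infinite module or a normal form with infinitely many classes. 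On the positive side, your reduction to rank-two parabolics is sound --- killing the remaining generators retracts $\nilhecke{<\infty}$ onto the dihedral algebra, so finiteness passes down --- and it is a clean organizing principle that the paper does not use; but as written the proposal defers every genuinely hard verification.
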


\begin{remark}
Note that setting any $d_i = 1$ yields degeneracies in the original
Definition~\ref{D11}, since the generator ${\bf s}_i$ is rendered
superfluous. Moreover,  if $d_i = \infty$ then $\nilhecke{0}$ contains
the $\bk$-span of ${\bf s_i}^n$ for $n \geqslant 0$. As our goal in this
paper is to classify the finite-rank nil-Hecke algebras, throughout this
paper we assume $2 \leqslant d_i < \infty\ \forall i \in I$.
\end{remark}

Following Theorem~\ref{Tinfty} from \cite{Kh2}, the present work obtains
the corresponding classification results for $J_{<k}$ with $k < \infty$.
Our first result is for $k = 1,2$:

\begin{utheorem}[$k=1,2$]\label{T12}
Fix a Coxeter group $W$ with related data $I,J,{\bf S},\R$, and integers
$d_i \geqslant 2\ \forall i$. The corresponding nil-Hecke $\bk$-algebra
$\nilhecke{<1} = \nilhecke{<2}$ is a finitely generated $\bk$-module if
and only if exactly one of the following occurs:
\begin{enumerate}
\item 
 The Coxeter (multi)graph of $W$ is a tree with exactly one multiple
edge $4 \leqslant m_{i_0 j_0} < \infty$, and $d_i = 2\ \forall i$. In
this case, if disconnecting $i_0, j_0$ yields trees $T, T'$ with $a,b$
nodes respectively, then
\[
\dim \nilhecke{<2} = \begin{cases}
\frac{1}{2} m_{i_0 j_0} |I|^2 + 1 - 2ab, \qquad & \text{if $m_{i_0 j_0}$
is even},\\
\frac{1}{2} (m_{i_0 j_0} - 1) |I|^2 + 1, & \text{if $m_{i_0 j_0}$ is
odd}.
\end{cases}
\]

\item 
 The Coxeter (multi)graph of $W$ is a simply laced tree, and at most one
node $i_0 \in I$ satisfies: $d_{i_0} \geqslant 3$. In this case,
\[
\dim \nilhecke{<2} = 1 + |I|^2 (d_{i_0} - 1).
\]
\end{enumerate}
\end{utheorem}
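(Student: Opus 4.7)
The plan is to exploit that the defining relations of $\nilhecke{<2}$ are all monomial: each $s_i^{d_i}$ and each braid word ${\bf v}_j, {\bf w}_j$ is set to zero. By standard monomial-algebra theory, $\nilhecke{<2}$ has as a $\bk$-basis the set of \emph{reduced} words --- those words in ${\bf S}$ containing none of these forbidden substrings --- and so both directions of the theorem reduce to counting reduced words.

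For the forward direction (finite-dimensionality forces one of the two cases), I would exhibit, in each excluded configuration, an explicit infinite family of reduced words. The configurations to rule out are: a pair $\{i,j\}$ with $m_{ij} = \infty$, where the alternating words $s_i s_j s_i \cdots$ of unbounded length are already reduced; a cycle of length $r \geq 3$ in the Coxeter graph, where the cyclic repetition $(s_{i_1} \cdots s_{i_r})^n$ avoids all braid triples; the Coxeter graph being a tree but with two distinct edges of label $\geq 4$, where one can interleave near-maximal oscillations along each via the connecting tree path; the coexistence of a multiple edge $m_{i_0 j_0} \geq 4$ with some $d_k \geq 3$; and a simply laced tree with two distinct nodes carrying $d \geq 3$. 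In each case one verifies reducedness by inspecting windows of lengths $d_i$ and $m_{ij}$ within the constructed family.

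For the reverse direction and the dimension formulas, I would enumerate reduced words in each case. In Case~(2), after grouping consecutive $s_{i_0}$'s into runs of length $1 \leq k \leq d_{i_0} - 1$, any reduced word corresponds to a walk in the tree that is no-backtracking except that a backtrack $v \to i_0 \to v$ is permitted precisely when the $s_{i_0}$-block has multiplicity $\geq 2$. I would then construct a bijection between non-empty reduced words and triples $(u, v, k) \in I \times I \times \{1, \ldots, d_{i_0} - 1\}$: for $k = 1$, send $(u, v, 1)$ to the word traversing the unique simple tree path from $u$ to $v$; for $k \geq 2$, send $(u, v, k)$ to the unique walk from $u$ to $v$ that passes through $i_0$ with multiplicity $k$ (taking a detour through $i_0$ when $u$ and $v$ lie on the same side of $i_0$). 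This yields $\dim \nilhecke{<2} = 1 + |I|^2 (d_{i_0} - 1)$.

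In Case~(1), let $T, T'$ denote the two subtrees obtained by cutting the multiple edge $(i_0, j_0)$, with $a, b$ vertices respectively. I would parametrize non-empty reduced words by the number $t$ of times they cross the special edge. Walks with $t = 0$ are simple paths entirely in $T$ or $T'$, contributing $a^2 + b^2$. For $t \geq 1$, a walk consists of a first simple path in $T$ or $T'$ ending at $i_0$ or $j_0$ (giving $a$ or $b$ choices), followed by an alternating block along the edge of total length $t + 1$, followed by a final simple path beginning at $i_0$ or $j_0$ (giving $a$ or $b$ choices); the parity of $t$ determines whether the first and last subtrees coincide ($t$ even: $a^2 + b^2$ walks) or differ ($t$ odd: $2ab$ walks). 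The constraint that the edge-alternation have length $\leq m - 1$ forces $t \leq m - 2$. Summing over $t \in \{0, 1, \ldots, m - 2\}$ and adding $1$ for the empty word yields $\frac{m-2}{2} |I|^2 + a^2 + b^2 + 1$ when $m$ is even and $\frac{m-1}{2} |I|^2 + 1$ when $m$ is odd, which expand to the claimed formulas. The main obstacle is this Case~(1) bookkeeping --- particularly the parity analysis and the correct treatment of trivial paths at the boundary vertices; I would sanity-check the calculation against $B_2$ (where the formula predicts $\dim = 7$) and $B_3$ (predicting $\dim = 15$).
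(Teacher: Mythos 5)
Your proposal is correct, and it reaches the theorem by a genuinely more self-contained route than the paper. The common ground is the observation that for $k\leqslant 2$ no braid relation survives, so $\nilhecke{<2}$ is an honest monomial algebra whose basis is the set of words avoiding the factors ${\bf s}_i^{d_i}$ and the alternating words of length $m_{ij}$; this is exactly the specialization of the paper's Theorem~\ref{Tdimplus} to $J_{<2}=\emptyset$ (where the equivalence classes are singletons). Where you diverge: for the negative direction the paper does not exhibit infinite families of nonzero words but instead constructs explicit infinite-rank cyclic modules via the diagrammatic calculus of Figure~\ref{Fig2} (Steps~2--3), and for the case ${\bf d}=(2,\dots,2)$ it outsources both the finiteness classification and the dimension formula of Case~(1) to Hart's count of elements with a unique reduced expression, via Theorem~\ref{Tdim}. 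Your approach replaces the modules by directly displaying unboundedly long reduced words (legitimate here precisely because the algebra is monomial; the module device is what survives to $k\geqslant 3$, where the algebra is no longer monomial), and it rederives Hart's count from scratch by the walk/alternating-block decomposition. I checked your bookkeeping: the single-block structure of a reduced walk in a tree is forced because any return to the special edge or to the high-order vertex would require a backtrack elsewhere in the tree; the parity count $\sum_{t=0}^{m-2}$ with $2ab$ for odd $t$ and $a^2+b^2$ for even $t$ gives $\tfrac{m-2}{2}|I|^2+a^2+b^2+1=\tfrac{m}{2}|I|^2+1-2ab$ for $m$ even and $\tfrac{m-1}{2}|I|^2+1$ for $m$ odd, matching the stated formulas (and your $B_2$, $B_3$ checks of $7$ and $15$ are right); and your triple $(u,v,k)$ bijection in Case~(2) agrees with the paper's spanning set $\{1\}\sqcup\{{\bf s}_{[s,t]}\}\sqcup\{{\bf s}_{[s,i_0]}{\bf s}_{i_0}^{k-2}{\bf s}_{[i_0,t]}\}$. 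What your route buys is independence from \cite{Ha} and from the module constructions; what the paper's route buys is brevity and uniformity with the $k\geqslant 3$ arguments. One small caveat shared with the paper: both arguments implicitly assume the Coxeter graph is connected (the ``tree'' hypothesis), so you should state irreducibility of $W$ up front.
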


\begin{remark}
Here and below, by the statement ``$\dim \nilhecke{0} = m$'' for an
integer $m>0$, we mean that $\nilhecke{0}$ is a free $\bk$-module of rank
$m$.
\end{remark}

Thus for $k=1,2$, there are infinitely many finite-dimensional nil-Hecke
algebras corresponding to ``non-Coxeter'' matrices (i.e., where $d_{i_0}
> 2$). In contrast, our next result shows that for $k \geqslant 3$, this
already gets reduced to precisely one non-Coxeter family, which for $k =
\infty$ corresponds to $NC_A(n,d)$ in Theorem \ref{Tinfty}(2).

\begin{utheorem}[$k=3$]\label{T3}
Fix a Coxeter group $W$ with related
data $I,J,{\bf S},\R$, and integers $d_i \geqslant 2\ \forall i$. The
corresponding nil-Hecke (or generalized nil-Temperley--Lieb)
$\bk$-algebra $\nilhecke{<3} = NTL_W({\bf d})$ is a finitely generated
$\bk$-module if and only if exactly one of the following occurs:
\begin{enumerate}
\item $W$ is a finite Coxeter group and ${\bf d} = (2,\dots,2)$ (these
are ``usual'' nil-Temperley--Lieb algebras);

\item $W$ is a Coxeter group of type $E_n (n \geqslant 9), \ F_n (n
\geqslant 5)$, or $H_n (n \geqslant 5)$ (see Figure \ref{Fig1}), and
${\bf d} = (2,\dots,2)$ (these are also ``usual'' nil-Temperley--Lieb
algebras); or

\item $W$ is of type $A$,
and ${\bf d} = (d,2,\dots,2)$ or $(2,\dots,2,d)$ for some $d>2$.
\end{enumerate}

In the first two cases, $\dim NTL_W({\bf d})$ is precisely the number of
\textit{fully commutative} elements in the corresponding Coxeter group
$W$, i.e., the number of words $w \in W$ for which any reduced word can
be obtained from any other without using the ``non-commutative'' braid
relations ($m_{ij}\geqslant 3$). In the last case of $W = W(A_n)$ and
$d_1 > 2 = d_2 = \cdots = d_n$, we have:
\begin{equation}\label{Exyx}
\dim NTL_W({\bf d}) = (d-1)C_{n+1}-(d-2)C_n+(d-2)\sum_{j=1}^{n-1}jC_{n-j}
\end{equation}
where $C_n$ is the $n^{\text{th}}$ Catalan number.
\end{utheorem}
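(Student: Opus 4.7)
The plan is to split along two axes: whether $\mathbf{d}=(2,\dots,2)$, and whether $W$ is of type $A$. The key structural observation is that $NTL_W(\mathbf{d}) = \nilhecke{<3}$ is a quotient of the generalized nil-Coxeter algebra $NC_W(\mathbf{d}) = \nilhecke{<\infty}$, since the former additionally imposes $\mathbf{v}_j = 0 = \mathbf{w}_j$ for all $j \in J \setminus J_{<3}$. Thus Theorem~\ref{Tinfty} immediately yields finite-dimensionality in cases~(1) and~(3), so only case~(2) and the converse direction (no other groups or tuples work) require genuinely new arguments.

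For $\mathbf{d}=(2,\dots,2)$, I would identify $NTL_W((2,\dots,2))$ with the usual nil--Temperley--Lieb algebra: the relations $\mathbf{s}_i^2=0$ together with $\mathbf{s}_i \mathbf{s}_j\mathbf{s}_i\cdots = 0$ for $m_{ij}\geq 3$ force any nonzero monomial to be a reduced word avoiding braid-type subwords, i.e., to correspond to a fully commutative element. The results of Fan and Graham (cf.~\cite{Fan1,Gr}) establish that these elements give a $\bk$-basis, so $\dim NTL_W$ equals the number of fully commutative elements; Stembridge's classification~\cite{St1,St2} then identifies the Coxeter systems with finitely many FC elements as exactly the finite Coxeter groups plus the three infinite FC-finite families $E_n\,(n\geq 9)$, $F_n\,(n\geq 5)$, $H_n\,(n\geq 5)$ of Figure~\ref{Fig1}. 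This establishes cases~(1) and~(2) together with the stated dimension.

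For $\mathbf{d}\neq (2,\dots,2)$, let $i_0$ satisfy $d_{i_0}>2$; the main task (and main obstacle) is to show that this forces $W$ to be of type $A$ with $i_0$ an endpoint and every other $d_i=2$. My approach is a sequence of local obstructions on the Coxeter (multi)graph $\Gamma$. First, if $i_0$ has two neighbors $j_1,j_2$ in $\Gamma$, I would exhibit an infinite family of linearly independent monomials alternating $\mathbf{s}_{j_1},\mathbf{s}_{j_2}$ interspersed with positive powers of $\mathbf{s}_{i_0}$, exploiting that $j_1,j_2$ commute when $\Gamma$ is a tree (handling non-tree cases by a separate local argument); this forces $i_0$ to be a leaf. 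Second, an analogous construction (with $\mathbf{s}_{i_0}^{d-1}$ playing the role of the distinguished separator) rules out branch vertices and multiple edges elsewhere in $\Gamma$, leaving only the simply-laced path of type $A$. Third, a parallel argument, or a direct invocation of Theorem~\ref{Tinfty} via the quotient map, shows no second index $j\neq i_0$ can have $d_j>2$. The hardest step is the first: verifying linear independence of the constructed monomial families by working in a carefully chosen spanning set modulo the braid-collapsing relations.

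Finally, for the dimension formula~\eqref{Exyx} in case~(3), I would construct an explicit basis for $NTL_A(n,d)$ by interleaving a fully commutative word of $S_{n+1}$ with a power of $\mathbf{s}_1$ in $\{0,1,\dots,d-1\}$ placed in a canonical position (for instance, at the leftmost admissible slot), and then correct for the nilpotence $\mathbf{s}_1^d=0$ and the vanishing $\mathbf{s}_1\mathbf{s}_2\mathbf{s}_1 = 0$. Since $|\mathrm{FC}(S_{n+1})|=C_{n+1}$, Catalan recursions on fully commutative elements according to whether they begin with $\mathbf{s}_1$ or not should produce the correction terms $(d-2)C_n$ and $(d-2)\sum_{j=1}^{n-1} jC_{n-j}$, yielding the stated formula.
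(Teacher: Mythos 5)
Your overall architecture matches the paper's: reduce to $\mathbf{d}=(2,\dots,2)$ and invoke Fan--Graham--Stembridge for cases (1)--(2), rule out everything else by local graph obstructions, and count monomials for the formula; the linear-independence tool you defer to in your ``hardest step'' is exactly Theorem~\ref{Tdimplus}. However, the central obstruction in your non-$(2,\dots,2)$ argument fails as stated. If $j_1,j_2$ are neighbours of $i_0$ and you alternate $\mathbf{s}_{j_1},\mathbf{s}_{j_2}$ with positive powers of $\mathbf{s}_{i_0}$ between them, then the word contains the contiguous substring $\mathbf{s}_{i_0}\mathbf{s}_{j_2}\mathbf{s}_{i_0}$ (last letter of one $i_0$-block, then $j_2$, then the first letter of the next block); since $m_{i_0 j_2}\geqslant 3$ this is a braid word of length $3$, hence zero in $\nilhecke{<3}$, for \emph{every} choice of positive exponents. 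The word that survives is $(\mathbf{s}_{j_1}\mathbf{s}_{j_2}\mathbf{s}_{i_0}^2)^N$: the two neighbours must be placed adjacent to one another (using that they commute, so no braid word forms between them), with $\mathbf{s}_{i_0}$ occurring in blocks of exactly two --- this is precisely what the paper's Figure~\ref{FigS3} with $m=1$ encodes. Separately, your fallback of ``directly invoking Theorem~\ref{Tinfty} via the quotient map'' to exclude a second index with $d_j>2$ is logically invalid: $NTL_W(\mathbf{d})$ is a quotient of $NC_W(\mathbf{d})$, and infinite rank does not descend to quotients; the paper needs a genuinely new module here (the first diagram of Figure~\ref{Fig2}).

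The dimension formula is also not reachable by your sketch. Inflating a fully commutative word of $S_{n+1}$ by a power of $\mathbf{s}_1$ and ``correcting'' for $\mathbf{s}_1^d=0$ and $\mathbf{s}_1\mathbf{s}_2\mathbf{s}_1=0$ accounts only for the first two terms $(d-1)C_{n+1}-(d-2)C_n$. The third term $+(d-2)\sum_{j=1}^{n-1}jC_{n-j}$ is not a correction (it enters with a plus sign) but enumerates genuinely new nonzero monomials of the form $x_ix_{i-1}\cdots x_2x_1^{j}x_2x_3\cdots x_\ell$ followed by a word in $x_{i+1},\dots,x_n$, with $j\geqslant 2$: deflating $x_1^j$ to $x_1$ produces the substring $x_2x_1x_2$ and hence the zero element, so these monomials do not arise from any fully commutative element and are invisible to your interleaving construction. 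Isolating and counting this exceptional family (via the lex-minimal representatives of Gowravaram--Khovanova and the failure of their ``increasing peaks'' lemma) is the actual content of the paper's Step~4, and your count would omit it entirely.
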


\begin{figure}[ht]
\begin{tikzpicture}[line cap=round,line join=round,>=triangle 45,x=1.0cm,y=1.0cm]
\draw(1,12) circle (0.2cm);
\draw(2,12) circle (0.2cm);
\draw(3,12) circle (0.2cm);
\draw(4,12) circle (0.2cm);
\draw(7,12) circle (0.2cm);
\draw(3,13) circle (0.2cm);
\draw (0.75,12.25) node[anchor=north west] {{\small 1}};
\draw (1.75,12.25) node[anchor=north west] {{\small 2}};
\draw (2.75,12.25) node[anchor=north west] {{\small 4}};
\draw (3.75,12.25) node[anchor=north west] {{\small 5}};
\draw (5.25,12.2) node[anchor=north west] {$\cdots$};
\draw (6.75,12.2) node[anchor=north west] {\textit{n}};
\draw (2.75,13.25) node[anchor=north west] {{\small 3}};
\draw (9,12.25) node[anchor=north west] {{$E_n \ (n \geqslant 9)$}};
\draw [-] (1.2,12) -- (1.8,12);
\draw [-] (2.2,12) -- (2.8,12);
\draw [-] (3.2,12) -- (3.8,12);
\draw [-] (4.2,12) -- (4.8,12);
\draw [-] (6.2,12) -- (6.8,12);
\draw [-] (3,12.2) -- (3,12.8);
\draw(1,11) circle (0.2cm);
\draw(2,11) circle (0.2cm);
\draw(3,11) circle (0.2cm);
\draw(4,11) circle (0.2cm);
\draw(7,11) circle (0.2cm);
\draw (0.75,11.25) node[anchor=north west] {{\small 1}};
\draw (1.75,11.25) node[anchor=north west] {{\small 2}};
\draw (2.75,11.25) node[anchor=north west] {{\small 3}};
\draw (3.75,11.25) node[anchor=north west] {{\small 4}};
\draw (5.25,11.2) node[anchor=north west] {$\cdots$};
\draw (6.75,11.2) node[anchor=north west] {\textit{n}};
\draw (9,11.25) node[anchor=north west] {{$F_n \ (n \geqslant 5)$}};
\draw [-] (1.2,11) -- (1.8,11);
5\draw [-] (2.2,10.95) -- (2.8,10.95);
\draw [-] (2.2,11.05) -- (2.8,11.05);
\draw [-] (3.2,11) -- (3.8,11);
\draw [-] (4.2,11) -- (4.8,11);
\draw [-] (6.2,11) -- (6.8,11);
\draw(1,10) circle (0.2cm);
\draw(2,10) circle (0.2cm);
\draw(3,10) circle (0.2cm);
\draw(4,10) circle (0.2cm);
\draw(7,10) circle (0.2cm);
\draw (0.75,10.25) node[anchor=north west] {{\small 1}};
\draw (1.75,10.25) node[anchor=north west] {{\small 2}};
\draw (2.75,10.25) node[anchor=north west] {{\small 3}};
\draw (3.75,10.25) node[anchor=north west] {{\small 4}};
\draw (5.25,10.2) node[anchor=north west] {$\cdots$};
\draw (6.75,10.2) node[anchor=north west] {\textit{n}};
\draw (9,10.25) node[anchor=north west] {{$H_n \ (n \geqslant 5)$}};
\draw [-] (1.2,10) -- (1.8,10);
\draw [-] (1.2,9.9) -- (1.8,9.9);
\draw [-] (1.2,10.1) -- (1.8,10.1);
\draw [-] (2.2,10) -- (2.8,10);
\draw [-] (3.2,10) -- (3.8,10);
\draw [-] (4.2,10) -- (4.8,10);
\draw [-] (6.2,10) -- (6.8,10);
\end{tikzpicture}
\caption{Dynkin graphs with $|W|$ infinite but $\dim(NTL_W)$
finite}\label{Fig1}
\end{figure}
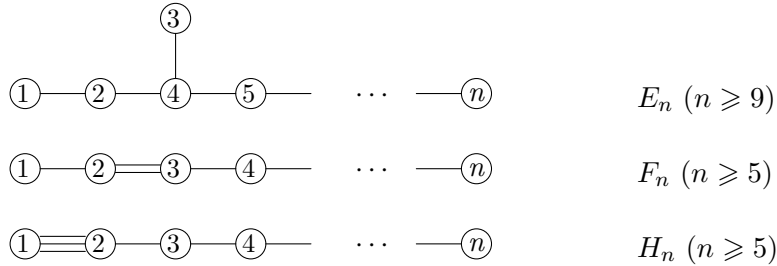

\begin{remark}
Following Postnikov's calling $NTL_{W(A_n)}((2,\dots,2))$ an
\textit{XYX-algebra} (see \cite{GK}), we term
$NTL_{W(A_n)}((d,2,\dots,2)) = n\mathcal{H}(W(A_n), (d,2,\dots,2),
J_{<3})$ a \textit{generalized XYX-algebra}. Its $\bk$-rank \eqref{Exyx}
involving Catalan numbers, generalizes the $d=2$ fact that the type $A$
XYX-algebra has rank $C_{n+1}$ -- a fact first observed in \cite{BJS}.
We also remark that the final sum in~\eqref{Exyx} is the partial
sum of the partial sum of the Catalan numbers, and its first few terms
can be found in \cite{OEIS}.
\end{remark}

In particular, Theorem \ref{T3} together with \cite[Theorem B]{Kh2}
implies Theorem \ref{Tinfty} for all generalized Coxeter matrices. This
because all Coxeter group algebras $\bk W$ are flat deformations of the
corresponding nil-Coxeter algebras $NC_W((2,\dots,2))$.

As the algebras $NTL_W({\bf d})$ are not our primary focus in the present
paper, we will not go beyond the proof of Theorem \ref{T3}; however, we
remark that the proof of Theorem \ref{T3} is constructive, and reveals
the analogue of fully commutative elements in the ``non-usual'' case of
the generalized XYX-algebras $NTL_{A_n}((2,\dots,2,d)), \ d > 2$. Recall
that such elements were introduced and studied by Stembridge in
\cite{St1,St3}, and enumerated in his sequel
\cite{St2}.
In \cite{St1}, Stembridge also computed the Coxeter groups which contain
only finitely many fully commutative elements (see also \cite{Fan0,Gr}):
these are the finite Coxeter groups as well as the families $E_n, F_n,
H_n$ as above.
Theorem~\ref{T3} now constructs and enumerates such elements in the
``non-usual'' generalized nil-Coxeter algebras $NC_A(n,d)$ (see
Theorem~\ref{Tinfty}(2)) introduced and studied in \cite{Kh2}, adding to
our knowledge of them.\medskip


Theorem~\ref{T3} classified the finite $\bk$-rank nil-Hecke algebras for
$k=3$. Next, we look at the case $k=4$, followed by an immediate
corollary for $k=5$. 

\begin{utheorem}[$k=4$]\label{T4}
Fix a Coxeter group $W$ with related data $I,J,{\bf S},\R$, and integers
$d_i \geqslant 2\ \forall i$. The corresponding nil-Hecke $\bk$-algebra
$\nilhecke{<4}$ is a finitely generated $\bk$-module if and only if
exactly one of the following occurs:
\begin{enumerate}
    \item $W$ is a finite Coxeter group with ${\bf d}=(2,\ldots,2)$; or
    \item $W$ is of type $A$,
and ${\bf d} = (d,2,\dots,2)$ or $(2,\dots,2,d)$ for some $d>2$.
\end{enumerate}
In the first case, $\dim\nilhecke{<4}$ is given by the following table:
\[
\begin{array}{c|c}
\hline
     W & \dim\nilhecke{<4}\\
    \hline
    A_n, D_n, E_6,E_7, E_8 & |W|\\
    
    B_n & \sum_{k=0}^n\binom{n}{k}^2k!\\
   
    F_4 & 304\\
    H_3 & 76\\
    H_4 & 1460\\
   
    I_2(m) & \begin{cases}
         2m & \text{ if } m<4  \\
          2m-1& \text{ if } m \geqslant 4
    \end{cases}\\
    \hline
\end{array}
\]
In the second case, we have $$\dim\nilhecke{<4}=n!(1+n(d-1)).$$
\end{utheorem}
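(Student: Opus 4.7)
The plan is to split the argument into the qualitative classification of $(W,\mathbf{d})$ for which $\nilhecke{<4}$ has finite $\bk$-rank, and the explicit dimension computation in each listed case.

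First I would observe that $\nilhecke{<4}$ surjects onto $\nilhecke{<3} = NTL_W(\mathbf{d})$, since passing from $J_{<4}$ to $J_{<3}$ only imposes the additional killing relations $\mathbf{v}_j = 0 = \mathbf{w}_j$ for those $j$ with $m_{ij} = 3$. Hence finite-dimensionality of $\nilhecke{<4}$ forces the same for $\nilhecke{<3}$, and Theorem~\ref{T3} already narrows the candidates to: (i) $W$ a finite Coxeter group with $\mathbf{d} = (2,\dots,2)$; (ii) $W$ one of $E_n\ (n\geqslant 9)$, $F_n\ (n\geqslant 5)$, or $H_n\ (n\geqslant 5)$ with $\mathbf{d} = (2,\dots,2)$; or (iii) $W$ of type $A$ with $\mathbf{d} = (d,2,\dots,2)$ or its reverse. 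Case (iii) is handled at once: in type $A$ every $m_{ij}$ lies in $\{2,3\}$, so $J_{<4} = J = J_{<\infty}$, and $\nilhecke{<4}$ coincides with the generalized nil-Coxeter algebra $NC_A(n,d)$ of Theorem~\ref{Tinfty}, whose $\bk$-rank $n!(1 + n(d-1))$ was already established in \cite{Kh2}.

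The heart of the argument in the remaining cases is a basis lemma: for $\mathbf{d} = (2,\dots,2)$, the algebra $\nilhecke{<4}$ has a $\bk$-linear basis indexed by the \emph{$4$-fully commutative} elements of $W$, i.e.~those $w \in W$ admitting a reduced expression with no alternating subword $(\mathbf{s}_i \mathbf{s}_j \cdots)_{m_{ij}}$ of length $m_{ij} \geqslant 4$. The standard way to prove this is to (a) define a nil action of each generator $\mathbf{s}_i$ on the free $\bk$-module spanned by $4$-fully commutative reduced words (sending $T_w$ to $T_{\mathbf{s}_i w}$ when $\mathbf{s}_i w$ is longer and still $4$-FC, and to $0$ otherwise), (b) verify the defining relations of $\nilhecke{<4}$ hold on this module (nilpotence $\mathbf{s}_i^2 = 0$, the retained short braid relations for $m_{ij} \leqslant 3$, and the new killing relations $(\mathbf{s}_i \mathbf{s}_j \cdots)_{m_{ij}} = 0$ for $m_{ij} \geqslant 4$), and (c) match this lower bound on the rank with the obvious spanning set of $\nilhecke{<4}$ by reduced words, exactly in parallel to the $NTL_W$ basis argument used in Theorem~\ref{T3}. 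In case (ii), the diagram $E_n\ (n\geqslant 9)$ is simply laced so every element of the infinite group $W$ is $4$-FC and $\dim\nilhecke{<4} = |W| = \infty$; for $F_n, H_n\ (n \geqslant 5)$ one instead exhibits an explicit infinite family of $4$-FC elements (e.g.~via suitable products or powers of Coxeter-like elements that stay clear of the unique multi-edge pattern), ruling these families out too.

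Given the basis lemma, the dimension formulas in case (i) reduce to counting $4$-fully commutative elements. For the simply laced families $A_n, D_n, E_6, E_7, E_8$ every element of $W$ is vacuously $4$-FC so the count is $|W|$. For $I_2(m)$ with $m \geqslant 4$, the only non-$4$-FC element is the longest element, yielding $2m-1$; while $m \in \{2,3\}$ gives $|W| = 2m$. The exceptional counts $304, 76, 1460$ for $F_4, H_3, H_4$ are obtained by direct enumeration, leveraging the single distinguished multi-edge in each diagram. The most substantive count is for $B_n$: I would identify $4$-FC elements with the $\overline{12}$-avoiding signed permutations advertised as a novel basis in the abstract, and then biject these with partial permutations of $[n]$ to land on $\sum_{k=0}^{n} \binom{n}{k}^2 k!$. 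The main obstacles I anticipate are: first, establishing the basis lemma in the presence of the length-$\geqslant 4$ killing relations (one must carefully track whether braid moves can connect a given reduced expression to a ``bad'' one containing a killed alternating subword); second, exhibiting and verifying the infinite $4$-FC families for $F_n, H_n\ (n \geqslant 5)$, where the distinguished multi-edge must be avoided in every reduced expression; and third, the combinatorial identification of $4$-FC elements in $B_n$ with $\overline{12}$-avoiding signed permutations together with the bijection to partial permutations yielding the closed form above.
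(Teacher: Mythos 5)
Your overall strategy matches the paper's: squeeze $\nilhecke{<4}$ between $\nilhecke{<\infty}$ and $\nilhecke{<3}$ to reduce the classification to the list in Theorem~\ref{T3}, dispose of type $A$ with $d>2$ by noting that all braid relations there have length $3$, and use a word basis (this is exactly Theorem~\ref{Tdim}/Theorem~\ref{Tdimplus}, already available from Section~\ref{S2}) to turn the dimension counts into counts of words avoiding the killed braid words. However, there are genuine gaps. First, your ``basis lemma'' indexes the basis by elements \emph{admitting} a reduced expression with no alternating subword of length $\geqslant 4$; the correct condition (Definition~\ref{Ddim}) is that \emph{no} reduced expression contains such a subword. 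These differ: in $B_3$ the element $s_0s_1s_0s_1s_2$ has the displayed bad reduced expression but also the good one $s_1s_0s_1s_2s_0$, yet its image in $\nilhecke{<4}$ is zero since $\mathbf{s}_0\mathbf{s}_1\mathbf{s}_0\mathbf{s}_1=0$. With your existential definition, the module in your step~(b) fails the relation $(\mathbf{s}_0\mathbf{s}_1)^2=0$ (apply $\mathbf{s}_1,\mathbf{s}_0,\mathbf{s}_1,\mathbf{s}_0$ in turn to $T_{s_2}$), and the same slip would inflate the $B_n$ count.

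Second, and more seriously, the infinite-dimensionality of $\nilhecke{<4}$ for $F_n$ and $H_n$ with $n\geqslant 5$ --- the only cases that are genuinely new at $k=4$ --- is left at the level of ``exhibit an infinite family of $4$-FC elements via suitable products or powers of Coxeter-like elements.'' This is where essentially all of the work lies: one must certify that \emph{every} reduced expression of each element in the family avoids $s_0s_1s_0s_1$ (resp.\ the length-$5$ alternating word), and no candidate family is produced or verified. The paper instead constructs explicit infinite-rank cyclic modules (Figures~\ref{F5module} and~\ref{H5module}), each with some twenty basis-vector orbits, and checks the defining relations node by node; nothing in your sketch substitutes for that verification. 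Likewise, the $B_n$ identification of the surviving words with $\overline{1}\overline{2}$-avoiding signed permutations requires the length-function induction of the paper's Lemma (both directions: producing a bad reduced expression from a bad pair, and the converse); you correctly flag this but do not supply it. The remaining steps (type $A$ with $d>2$, the $ADE$ and $I_2(m)$ cases, and the computational checks for $F_4$, $H_3$, $H_4$) are fine and agree with the paper.
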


\begin{cor}[$k=5$]\label{T5}
Fix a Coxeter group $W$ with related data $I,J,{\bf S},\R$, and integers
$d_i \geqslant 2\ \forall i$. The corresponding nil-Hecke $\bk$-algebra
$\nilhecke{<5} $ is a finitely generated $\bk$-module if and only if
exactly one of the following occurs:
\begin{enumerate}
    \item $W$ is a finite Coxeter group with ${\bf d}=(2,\ldots,2)$; or
    \item $W$ is of type $A$,
and ${\bf d} = (d,2,\dots,2)$ or $(2,\dots,2,d)$ for some $d>2$.
\end{enumerate}
In the first case, $\dim\nilhecke{<5}$ is given by the following table:

\[
\begin{array}{c|c}
\hline
     W & \dim\nilhecke{<5}\\
    \hline
    A_n, B_n, D_n, E_6,E_7, E_8, F_4 & |W|\\
    
    H_3 & 76\\
    H_4 & 1460\\
   
    I_2(m) & \begin{cases}
         2m & \text{ if } m<5  \\
          2m-1& \text{ if } m \geqslant 5
    \end{cases}\\
    \hline
\end{array}
\]
In the second case, we have $$\dim\nilhecke{<5}=n!(1+n(d-1)).$$
\end{cor}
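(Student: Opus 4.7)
The plan is to deduce this from Theorem \ref{T4} together with the natural surjection $\nilhecke{<5} \twoheadrightarrow \nilhecke{<4}$, which arises because the inclusion $J_{<4} \subset J_{<5}$ forces $\nilhecke{<4}$ to carry strictly more zero relations ${\bf v}_j = 0 = {\bf w}_j$. The ``only if'' direction is then immediate: if $\nilhecke{<5}$ is a finitely generated $\bk$-module, so is its quotient $\nilhecke{<4}$, and Theorem \ref{T4} pins down $(W,{\bf d})$ as in its cases (1) or (2), which is precisely the list appearing above.

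For the ``if'' direction, the key bookkeeping observation is that $J_{<5} \setminus J_{<4}$ is exactly the set of edges with $m_{ij} = 4$. I would split the finite-rank cases according to whether the Coxeter (multi)graph of $W$ carries any such edge. When it does not -- which covers types $A_n, D_n, E_6, E_7, E_8, H_3, H_4$, dihedral $I_2(m)$ with $m \neq 4$, and the type-$A$ series with ${\bf d} = (d,2,\ldots,2)$ -- we have $J_{<5} = J_{<4}$, so the defining presentations of $\nilhecke{<5}$ and $\nilhecke{<4}$ coincide verbatim, and the dimensions are inherited directly from Theorem \ref{T4}: $|W|$ for the first block, $76$ and $1460$ for $H_3$ and $H_4$, $2m - 1$ for $I_2(m)$ with $m \geqslant 5$ and $2m$ for $m \leqslant 3$, and $n!(1+n(d-1))$ for the type-$A$ series.

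The remaining cases -- $W \in \{B_n, F_4, I_2(4)\}$ with ${\bf d} = (2,\ldots,2)$ -- are those for which some $m_{ij} = 4$ actually appears. Since every $m_{ij}$ in each of these Coxeter graphs lies in $\{2,3,4\}$, we in fact have $J_{<5} = J$, meaning every braid relation of $W$ is retained. Thus $\nilhecke{<5}$ reduces to the usual nil-Coxeter algebra $NC_W((2,\ldots,2))$, which by Theorem \ref{Tinfty}(1) is a free $\bk$-module of rank $|W|$. This gives $2^n n!$ for $B_n$, $1152$ for $F_4$, and $2m = 8$ for $I_2(4)$, all matching the table.

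There is no substantive obstacle here: the combinatorial heart of the classification was already settled in Theorem \ref{T4}, and this corollary just records which length-$4$ braid relations get \emph{reinstated} when $k$ is bumped from $4$ to $5$, converting the partial nil-Hecke algebras for $B_n, F_4, I_2(4)$ into their full nil-Coxeter counterparts of rank $|W|$.
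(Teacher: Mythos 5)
Your proposal is correct and follows essentially the same route as the paper: the ``only if'' direction via the surjection $\nilhecke{<5}\twoheadrightarrow\nilhecke{<4}$, and the ``if'' direction by observing that $J_{<5}\setminus J_{<4}$ consists exactly of the length-$4$ braid relations, so the algebra either coincides with the $k=4$ one (no $m_{ij}=4$, covering $A,D,E,H_3,H_4$, $I_2(m)$ with $m\neq 4$, and the type-$A$ series) or becomes the full nil-Coxeter algebra of rank $|W|$ (types $B_n$, $F_4$, $I_2(4)$, where all braid relations have length at most $4$). The only quibble is that the word ``strictly'' in ``strictly more zero relations'' should be dropped when $W$ has no $m_{ij}=4$ edge, but this does not affect the argument.
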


\begin{remark}
In particular, this work provides some refined combinatorial information
that (to the best of our knowledge) was not written down for ``usual''
nil-Coxeter algebras -- nor for
their ``non-usual'' (aka generalized, i.e.\ with ${\bf d} \neq
(2,\dots,2)$) counterparts in type $A$.
\begin{itemize}
\item For ``usual'' nil-Coxeter algebras, we prove the above results by
computing not just the dimension (or $\bk$-rank), but by isolating -- via
Theorem~\ref{Tdim} -- a distinguished subset of words $S'_k$ inside the
``usual'' monomial basis, which provides a $\bk$-basis. For $k=1,2$, the
set $S'_k$ is simply the full basis (of size $|W|$), and for $k=3$ it
corresponds to the subset $W_{\rm fc}$ of fully commutative elements.
However, for $k=4,5$, $S'_k$ was not isolated earlier -- even for types
$B_n, F_4, H_3, H_4$ if we restrict to $S'_k$ finite.

There is also combinatorial information, which to the best of our
knowledge was not previously connected to algebra. Namely, in type $B_n$
in Theorem~\ref{T4}, the monomials in $S'_4$ that are a basis of the
algebra $\nilhecke{<4}$ -- i.e., which avoid braid relations that are
``long enough'' -- turn out to be precisely the
``$\overline{1}\overline{2}$-avoiding signed permutations''.

\item Moving from combinatorics to algebra: in addition to our
identifying a subset of words with some combinatorial properties -- our
work moreover isolates a multiplication structure on their span. This is
akin to the set of fully commutative words inside $W$ -- while their
closure under multiplication in $W$ is unclear, this indeed happens when
looking at their images inside the corresponding nil-Temperley--Lieb
algebras. Similarly, the construction of ``usual'' nil-Hecke algebras in
this work reveals a multiplication structure on the $\bk$-span of the set
$S'_k$ for $k=4,5$, which is not revealed when considering $S'_k$ as a
subset of the Coxeter group $W$.

\item In parallel, in the ``non-usual'' case, the work \cite{Kh2} that
introduced the algebras $NC_A(n,d)$ (see Theorem~\ref{Tinfty}(2)) could
not detect the analogue of the fully commutative elements, which we now
uncover (via Theorem~\ref{Tdimplus}) in the proof of Theorem~\ref{T3}.
\end{itemize}
\end{remark}

Returning to the classification of finite-rank nil-Hecke algebras, our
final result is for $k \geqslant 6$.

\begin{cor}[$6 \leqslant k \leqslant \infty$]\label{T6}
Fix a Coxeter group $W$ with related data $I,J,{\bf S},\R$, integers $d_i
\geqslant 2\ \forall i$ and $6 \leqslant k \leqslant \infty$. The
corresponding nil-Hecke $\bk$-algebra $\nilhecke{<k} $ is a finitely
generated $\bk$-module if and only if $W$ is either a finite Coxeter
group, with ${\bf d} = (2,\ldots, 2)$ or $W$ is of type $A$ with ${\bf d}
= (d,2,\ldots,2)$ or $(2,\ldots,2,d)$ for some $d>2$. In the first case,
the corresponding nil-Hecke algebra has $\bk$-rank equal to $|W|$ unless
it is of the type $I_2(m)$, in which case it is given by
\[
\dim\nilhecke{<k} = \begin{cases}
2m &\text{if } m<k,\\
2m-1 & \text{if } m \geqslant k.\end{cases}
\]
In the second case, the $\bk$-rank is $n!(1+n(d-1))$.
\end{cor}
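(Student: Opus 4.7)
The plan is to derive Corollary~\ref{T6} from Theorem~\ref{T4} with essentially no new work, beyond a short direct computation for the dihedral family $I_2(m)$ when $m \geqslant k$. The key observation is that whenever $k_1 \leqslant k_2$, the inclusion $J_{<k_1} \subset J_{<k_2}$ means that $\nilhecke{<k_2}$ has strictly fewer killing relations ${\bf v}_j = 0 = {\bf w}_j$ than $\nilhecke{<k_1}$, so imposing the extra killing relations produces a canonical surjection $\nilhecke{<k_2} \twoheadrightarrow \nilhecke{<k_1}$. Specializing to $k_1 = 4$ and $k_2 = k \geqslant 6$, finite generation of $\nilhecke{<k}$ forces finite generation of $\nilhecke{<4}$, so Theorem~\ref{T4} already restricts $(W, {\bf d})$ to the two listed families, establishing the ``only if'' direction.

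For the converse direction I would case-split on bond strengths. The finite Coxeter groups $W$ of types $A_n, B_n, D_n, E_6, E_7, E_8, F_4, H_3, H_4$ all have every bond $m_{ij} \leqslant 5 < k$, so $J_{<k} = J$ and $\nilhecke{<k}$ coincides with the usual nil-Coxeter algebra $NC_W((2,\dots,2))$, of rank $|W|$. Similarly, for type $A_n$ with ${\bf d} = (d,2,\dots,2)$ or $(2,\dots,2,d)$, all bonds lie in $\{2,3\}$, so again $J_{<k} = J$ and $\nilhecke{<k} = NC_A(n,d)$ is the generalized nil-Coxeter algebra of Theorem~\ref{Tinfty}(2), of $\bk$-rank $n!(1+n(d-1))$ by Theorem~\ref{T4}.

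The only remaining subcase is $W = I_2(m)$, where $J$ has a single element with ${\bf v}_j, {\bf w}_j$ the two alternating words of length $m$. If $m < k$ then $J_{<k} = J$, and $\nilhecke{<k}$ is the usual nil-Coxeter algebra of $I_2(m)$, of rank $2m$. If instead $m \geqslant k$, then $J_{<k} = \emptyset$, and the defining relations of $\nilhecke{<k}$ are precisely those of $NC_{I_2(m)}$ together with $w_0 = 0$, where $w_0$ is the longest element of $I_2(m)$ (identified with both length-$m$ alternating monomials inside $NC_{I_2(m)}$). Hence $\nilhecke{<k} \cong NC_{I_2(m)}/(w_0)$; since the nil-Coxeter relations force ${\bf s}_i \cdot w_0 = w_0 \cdot {\bf s}_i = 0$ for $i = 1, 2$, the two-sided ideal $(w_0)$ is $\bk$-spanned by $w_0$ alone, and the quotient has rank $2m - 1$. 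The only step requiring genuine verification is this last ideal computation for $I_2(m)$; I expect it to be the main (and rather mild) obstacle, while the rest amounts to confirming $J_{<k} = J$ in each finite case.
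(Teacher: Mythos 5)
Your proposal is correct and follows essentially the same route as the paper: reduce the ``only if'' direction via the canonical surjection onto a smaller-$k$ nil-Hecke algebra (the paper uses $\nilhecke{<5}$ and Corollary~\ref{T5}, you use $\nilhecke{<4}$ and Theorem~\ref{T4}, which give the same list), then observe that in every surviving non-dihedral case all braid words have length at most $5 < k$, so $\nilhecke{<k} = NC_W({\bf d})$, and handle $I_2(m)$ directly. Your explicit computation that the two-sided ideal generated by ${\bf s}_{w_0}$ in $NC_{I_2(m)}$ is spanned by ${\bf s}_{w_0}$ alone is just an unwinding of the paper's appeal to Theorem~\ref{Tdim} in the dihedral case, and it is correct.
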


For ease of reference, we tabulate the above results for $3 \leqslant k
\leqslant \infty$ in Table~\ref{Ttable}, and follow it up with a few
comments.

\begin{table}[ht]
\hspace*{-8mm}\begin{tabular}{| c || c | c | c | c |}
\hline
$W$ & $J_{<3}$ & $J_{<4}$ & $J_{<5}$ & $J_{<k}$ ($6 \leqslant k \leqslant
\infty$)\\
\hline
\hline
$A_n, D_n, E_6, E_7, E_8$  & $|W_{\rm fc}|$ & $|W|$  & $|W|$     & $|W|$\\
$B_n$                      & $|W_{\rm fc}|$ & $\displaystyle \sum_{k=0}^n
\binom{n}{k}^2 k!$ & $|W|$ & $|W|$\\
$F_4$                      & $|W_{\rm fc}|$ & $304$  & $|W|$     & $|W|$\\
$H_3$                      & $|W_{\rm fc}|$ & $76$   & $76$      & $|W|$\\
$H_4$                      & $|W_{\rm fc}|$ & $1460$ & $1460$    & $|W|$\\
\hline
$I_2(m)$                   & $|W|$ if $m<3$, &
$|W|$ if $m<4$, & $|W|$ if $m<5$,  & $|W|$ if $m<k$\\
& else $|W_{\rm fc}| = |W|-1$ & else $|W|-1$ & else $|W|-1$
& else $|W|-1$\\
\hline
$E_n$ $(n \geqslant 9)$, & & & & \\
$F_n$ $(n \geqslant 5)$, & $|W_{\rm fc}|$ & & & \\
$H_n$ $(n \geqslant 5)$ & & & & \\
\hline
$A_n$, ${\bf d} = (d,2,\dots,2)$, & (see \eqref{Exyx})
& $n! (1+n(d-1))$ & $n! (1+n(d-1))$ & $n! (1+n(d-1))$\\
$d>2$ & & & & \\
\hline
\end{tabular}\bigskip
\caption{Classification of Coxeter groups with finite-dimensional
nil-Hecke algebras $\nilhecke{<k}$ for $3 \leqslant k \leqslant \infty$.
Except the last row, ${\bf d} = (2,\dots,2)$. $W_{\rm fc}$ denotes
the fully commutative elements in $W$. Each cell entry is the
dimension/rank over $\bk$.}\label{Ttable}
\end{table}
\bigskip

\begin{remark}
The fully commutative words in all Coxeter groups were studied and
enumerated by Stembridge in \cite{St2}. In particular, for $W$ dihedral
(i.e., of type $I_2(m)$), $W_{\rm fc} = W \setminus \{ w_\circ \}$
comprises all non-longest elements. Thus in Table~\ref{Ttable}, the
$\bk$-rank of $\nilhecke{<k}$ over $W = W(I_2(m))$ with $m \geqslant 3$
is $|W|$ if $m < k$, and $|W| - 1 = |W_{\rm fc}|$ otherwise.

This shows that the finite rank nil-Hecke algebras have ranks either
(a)~equal to $|W|$ or $|W_{\rm fc}|$, or
(b)~belonging to the three exceptional cases $F_4, H_3, H_4$ with $k=4$,
or
(c)~belonging to the three exceptional families: $B_n$ with $k=4$, or
$A_n$ with $k=3,4$ and ${\bf d} = (2,\dots,2,d)$ for some $d \geqslant
2$.

Finally, note that as $\nilhecke{<k}$ is a quotient of $\nilhecke{<l}$
for $1 \leqslant k \leqslant l \leqslant \infty$, the list of
finite rank examples for any value of $k$ -- i.e., in any non-initial
column in Table~\ref{Ttable} -- is contained in the rows occupied in the
previous column. (This is useful in the proofs below.) In particular, the
result in \cite{Kh2} that there is only one family of finite rank
generalized nil-Coxeter algebras with ${\bf d} \neq (2,\dots,2)$, already
follows from the $J_{<4}$ column in Table~\ref{Ttable}.
\end{remark}

Our next result provides an equivalent condition to the
finite-dimensionality of nil-Hecke algebras, motivated by considerations
of deformation theory (see the end of Section~\ref{S2}):

\begin{utheorem}\label{TNil}
Fix a Coxeter group $W$ with related data $I,J,{\bf S},\R$, and integers
$d_i \geqslant 2\ \forall i$, and let $k$ be an integer such that $1
\leqslant k \leqslant \infty$. Then the following are equivalent:
\begin{enumerate}
\item The corresponding nil-Hecke $\bk$-algebra $\nilhecke{<k}$ is a
finitely generated $\bk$-module.

\item The nil-Hecke algebra is found in the lists in Theorems \ref{T12},
\ref{T3}, \ref{T4} and Corollaries \ref{T5}, \ref{T6} -- or equivalently,
in Theorem~\ref{T12} and Table~\ref{Ttable}.

\item The two-sided augmentation ideal $\m \subset \nilhecke{<k}$, which
is generated by $\{ {\bf s}_i : i \in I \}$, is nilpotent.
\end{enumerate}
If $\bk$ is moreover a field, then the finite-dimensional $\bk$-algebra
$\nilhecke{<k}$ is local, with unique maximal ideal $\m$.
\end{utheorem}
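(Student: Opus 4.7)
The plan is to observe that the equivalence (1)$\Leftrightarrow$(2) is essentially the content of Theorems~\ref{T12}, \ref{T3}, \ref{T4} and Corollaries~\ref{T5}, \ref{T6} combined, so the actual work lies in establishing (1)$\Leftrightarrow$(3) and the concluding locality assertion. Since (1)$\Leftrightarrow$(2) reduces to quoting prior results (and Theorem~\ref{Tinfty} for the $k=\infty$ case), I would frame the proof around the two nontrivial implications (3)$\Rightarrow$(1) and (1)$\Rightarrow$(3), which hold without using the classification at all.

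For (3)$\Rightarrow$(1), suppose $\m^N = 0$ for some $N \geqslant 1$. Because $\m$ is generated as a two-sided ideal by the finite set $\{\mathbf{s}_i : i \in I\}$, the quotient $\m^n/\m^{n+1}$ is a finitely generated $\bk$-module for every $n \geqslant 0$ (spanned by words of length $n$ in the $\mathbf{s}_i$). Iterating the short exact sequences $0 \to \m^{n+1} \to \m^n \to \m^n/\m^{n+1} \to 0$ up to $n=N$ and adding the rank-one piece $\nilhecke{<k}/\m \cong \bk$, one obtains a finite filtration of $\nilhecke{<k}$ with finitely generated subquotients, hence $\nilhecke{<k}$ itself is finitely generated over $\bk$.

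For (1)$\Rightarrow$(3), the key point is that the defining relations of $\nilhecke{<k}$ are \emph{homogeneous} in the word-length grading: the order relation $\mathbf{s}_i^{d_i} = 0$ is homogeneous of degree $d_i$; each surviving braid relation $\mathbf{v}_j = \mathbf{w}_j$ equates two words of equal length; and the killed relations $\mathbf{v}_j = 0 = \mathbf{w}_j$ are monomial. Thus $\nilhecke{<k} = \bigoplus_{n \geqslant 0} A_n$ is a nonnegatively graded $\bk$-algebra with $A_0 = \bk$ and $\m = \bigoplus_{n \geqslant 1} A_n$. If $\nilhecke{<k}$ is finitely generated as a $\bk$-module, pick any finite $\bk$-generating set and replace its elements by their homogeneous components; this produces a homogeneous $\bk$-generating set of bounded degree $\leqslant N$, so $A_n = 0$ for all $n > N$. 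But $\m^{N+1} \subseteq \bigoplus_{n \geqslant N+1} A_n = 0$, establishing nilpotency.

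Finally, assuming $\bk$ is a field and (3) holds, every element of $\m$ is nilpotent, so $1 - x$ is a unit in $\nilhecke{<k}$ for all $x \in \m$. Writing an arbitrary element of $\nilhecke{<k}$ as $c + m$ with $c \in \bk$ and $m \in \m$ via the decomposition $\nilhecke{<k} = \bk \oplus \m$, one sees that $c+m$ is a unit whenever $c \neq 0$; equivalently the non-units are exactly $\m$, so $\nilhecke{<k}$ is local with unique maximal ideal $\m$. The main obstacle in the argument is really conceptual rather than technical: one must verify at the outset that all the relations defining $\nilhecke{<k}$ -- including the extra ones killing $\mathbf{v}_j$ and $\mathbf{w}_j$ for $j \in J \setminus J_0$ and the order relations -- are homogeneous, since the whole proof of (1)$\Rightarrow$(3) rests on the existence of the word-length grading.
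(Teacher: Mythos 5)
Your proposal is correct, and for one of the two nontrivial implications it takes a genuinely different route from the paper. For $(1)\Rightarrow(3)$ your argument and the paper's are the same in spirit: both rest on the observation that every defining relation of $\nilhecke{<k}$ is homogeneous in word length, so that a word of length exceeding the maximal degree occurring in the algebra must vanish. The paper phrases this through the monomial basis of Theorem~\ref{Tdimplus} (a finite basis has a maximal length $\ell$, and any word of length $>\ell$ reduces to a forbidden substring, hence to zero, since braid moves preserve length), whereas you phrase it through the length grading directly; these are essentially equivalent, though your version does not need the basis theorem. The real divergence is in $(3)\Rightarrow(1)$: the paper proves the contrapositive by revisiting every infinite-rank case of the classification and exhibiting, in each explicit cyclic module with a ``$+$''-decorated directed cycle, a nonzero element of $\m^N$ for every $N$. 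Your filtration argument --- $\m^n/\m^{n+1}$ is spanned by the finitely many words of length exactly $n$, so $\m^N=0$ forces finite generation --- is shorter, self-contained, and independent of the classification, which is a genuine gain in robustness; what the paper's route buys instead is an explicit witness to non-nilpotence (a concrete non-nilpotent cycle of generators) in each infinite-dimensional case. Your treatment of locality matches the paper's, with the minor addition of identifying the non-units with $\m$ explicitly. The one point you rightly flag --- that the killed relations ${\bf v}_j={\bf w}_j=0$ and the order relations ${\bf s}_i^{d_i}=0$ are monomial, hence homogeneous --- does hold, so the grading exists and your proof goes through.
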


Note that Theorem~\ref{TNil} generalizes the assertion \cite[Theorem
D(3)]{Kh2}, which is simply the $k=\infty$ special case.

\subsection{Complex reflection groups; Frobenius algebras}

We next explore the analogous picture when working with nil-Hecke
$\bk$-algebras over complex reflection groups. Recall that such groups
also have Coxeter-type presentations, listed e.g.\ in
\cite{BMR2,Mal,Po1}. In fact this holds not only for the finite complex
reflection groups (e.g.\ classified by Sheppard--Todd \cite{ST}), but
also for the infinite discrete groups generated by affine unitary
reflections -- these were classified by Popov \cite{Po1}. In the sequel,
we term these (finite or infinite) groups as \textit{discrete complex
reflection groups}.

Given such a complex reflection group -- which we continue to denote by
$W$ -- one can define a lattice of nil-Hecke type algebras
$\nilhecke{<k}$ over $\bk$, and we define these following \cite{Be,BMR2}
(and as in \cite{Kh2}):

\begin{definition}\label{Dfinite}
Suppose $W$ is a discrete (finite or infinite) complex reflection group,
together with a finite generating set of complex reflections $\{ s_i : i
\in I \}$, the order relations $s_i^{m_{ii}} = 1\ \forall i$, a set of
braid relations $\{ R_j : j \in J \}$ -- each involving words with at
least two distinct reflections $s_i$ -- and for the infinite non-Coxeter
complex reflection groups $W$ listed in \cite[Tables I, II]{Mal}, one
more order relation $R_0^{m_0} = 1$.
Now define $I_0 := I \sqcup \{ 0 \}$ for these infinite non-Coxeter
complex reflection groups $W$, and $I_0 := I$ otherwise. Given an integer
vector ${\bf d} \in \mathbb{N}^{I_0}$ with $d_i \geqslant 2\ \forall i$,
define the corresponding \textit{nil-Hecke algebra} to be
\begin{equation}
\nilhecke{<k} :=
\frac{\bk \tangle{\mathbf{s}_i, i \in I}} {( \{ R_j' = 0, j \in J_{<k} \},
\mathbf{s}_i^{d_i} = 0\ \forall i \in  I, \ (R'_0)^{d_0} = 0)},
\end{equation}

\noindent where the braid relations $R_j$ are replaced by the
corresponding relations $R'_j$ in the alphabet $\{ \mathbf{s}_i : i \in I
\}$, and similarly for $R'_0$ if $R_0^{m_0} = 1$ in $W$; and where the
two braid words on either side of the relations $R'_j$ that are of equal
length $\geq k$ (i.e., $j \not\in J_{<k}$) are both set to equal zero.
There is also the notion of the corresponding \textit{braid diagram} as
in \cite[Tables 1--4]{BMR2} and \cite[Tables I, II]{Mal}; this is no
longer always a Coxeter graph.
\end{definition}

\begin{remark}
As Popov explains in \cite[Section 1.6]{Po1}, one needs to work in the
preceding definition with a specific presentation for $W$, since there is
no canonical (minimal) set of generating reflections. Some related
results are found in \cite{Bas1}.
\end{remark}

We now present our next main theorem. Parallel to the above results, it
is natural to ask which of these algebras have finite $\bk$-rank, even if
(parallel to the BMR freeness conjecture) this never happens for
$k=\infty$. For $k = \infty$, this was explored in previous work
\cite{Kh2}, where it was shown that no such group yields finite-rank
Hecke algebras. This result subsumes an interesting observation of Marin
\cite{Ma}, where he says that a key difference between real and complex
reflection groups $W$ is that there are no nil-Coxeter $|W|$- (or
finite-)dimensional algebras over complex reflection groups.

Our next theorem explores the (larger) lattice of nil-Hecke algebras --
and shows that Marin's observation, as well as the result in \cite{Kh2},
hold more generally for all $k>2$ -- for \textit{finite} complex
reflection groups. However, in a break with these results, we provide the
first -- and only! -- two examples of finite-dimensional nil-Hecke
algebras, and they arise over infinite discrete complex reflection
groups:

\begin{utheorem}\label{Tcomplex}
Suppose $3 \leq k \leq \infty$, and $W$ is any irreducible discrete
complex reflection group, i.e., $W$ is a complex reflection group with
connected braid diagram and presentation given as in
\cite[Tables 1--4]{BMR2},
\cite[Tables I, II]{Mal}, or
\cite[Table 2]{Po1}.
Also fix an integer vector ${\bf d}$ with $d_i \geqslant 2\ \forall i$
(including possibly for the additional order relation as in \cite{Mal}).
\begin{enumerate}
\item If $W$ is finite or genuine crystallographic (this notation is
explained in the proof), then $\nilhecke{<k}$ has infinite $\bk$-rank.

\item If $W$ is noncrystallographic then either $\nilhecke{<k}$ has
infinite $\bk$-rank, or $k=3$ and $W$ is the complexification of an
affine Coxeter group of type $E_9$ or $F_5$.
\end{enumerate}
\end{utheorem}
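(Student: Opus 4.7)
The plan is to leverage the surjection $\nilhecke{<k} \twoheadrightarrow \nilhecke{<3}$ for $3 \leq k \leq \infty$ (obtained by adding the ``killing'' relations $\mathbf{v}_j = \mathbf{w}_j = 0$ for braid words of length between $3$ and $k$), so that infinite $\bk$-rank of $\nilhecke{<3}$ forces infinite $\bk$-rank of $\nilhecke{<k}$ for all $k \geq 3$. Case-by-case analysis then follows the classifications of discrete complex reflection groups due to Shephard--Todd, Popov, and Malle.

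For the noncrystallographic class, where $W$ is the complexification of a real affine Coxeter group, the braid presentation of $W$ coincides with the real Coxeter presentation, so $\nilhecke{<k}$ is literally the nil-Hecke algebra of the underlying real affine Coxeter group. Hence Theorem~\ref{T3} (for $k=3$) and Theorem~\ref{T4}/Corollary~\ref{T6} (for $k \geq 4$) apply directly. At $k=3$, the infinite Coxeter groups with finite-rank nil-Temperley--Lieb algebra are Stembridge's FC-finite families $E_n\ (n \geq 9)$, $F_n\ (n \geq 5)$, $H_n\ (n \geq 5)$ (with $\mathbf{d} = (2,\ldots,2)$). Among these, only $E_9 = \widetilde{E}_8$ and $F_5 = \widetilde{F}_4$ are affine (hence have complexifications on Popov's list): the $H$-types are noncrystallographic already as real reflection groups, and the larger $E_n, F_n$ are hyperbolic/Lorentzian, not discrete complex reflection groups on a finite-dimensional complex vector space. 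This recovers exactly the exceptional cases in part~(2). For $k \geq 4$, Theorem~\ref{T4} and Corollary~\ref{T6} leave only finite Coxeter groups (and a type-$A$ variant) as finite-rank; affine Coxeter groups are infinite and the type-$A$ complexifications do not fit that subcase, so $\nilhecke{<k}$ is infinite-rank.

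For the finite and genuine crystallographic classes of part~(1), the plan is to enumerate $W$ via the Shephard--Todd, Popov, and Malle presentations and verify infinite $\bk$-rank of $\nilhecke{<k}$ for each. The main tool is to construct an infinite family of nonzero monomials in $\nilhecke{<k}$, which by the reduction above suffices to verify in $\nilhecke{<3}$. Whenever the braid diagram of $W$ contains a cycle subdiagram, one exhibits ``periodic alternating'' monomials $\mathbf{s}_{i_1} \mathbf{s}_{i_2} \cdots \mathbf{s}_{i_r} \mathbf{s}_{i_1} \mathbf{s}_{i_2} \cdots$ of unbounded length whose length-$\geq 3$ substrings avoid all the killed braid words; nonzero-ness is witnessed by an explicit ``zigzag'' representation on an infinite-rank graded free $\bk$-module (as in the affine $\widetilde{A}_n$ triangle computation). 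For cycle-free cases (e.g., rank-$2$ exceptional complex reflection groups and some Popov families), the strategy is to exploit the order freedom $\mathbf{s}_i^{d_i} = 0$ with $d_i$ a free parameter $\geq 2$ (not tied to $m_{ii}$), combined with the exotic braid relations of length $\geq 3$ that are set to zero, to build infinite families via two- or three-generator sub-algebras.

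The main obstacle will be obtaining uniform infinite-rank arguments for Popov's genuine crystallographic list and the exceptional finite complex reflection groups whose braid presentations lack cycle structure. For these, a case-by-case representation-theoretic construction is likely needed, carefully accounting for the extra order relation $(R_0')^{d_0} = 0$ from Malle's tables (which must be shown not to collapse the constructed infinite families). Concretely, for each such $W$ one builds a representation of $\nilhecke{<3}$ on an infinite-rank graded $\bk$-module on which a prescribed infinite sequence of monomials acts by $\bk$-linearly independent operators. Finally, the two finite-rank exceptions $W =$ complexification of $E_9$ or $F_5$ at $k=3$ with $\mathbf{d} = (2,\ldots,2)$ follow immediately from Theorem~\ref{T3}(2), since their complex presentation is identical to the affine Coxeter one.
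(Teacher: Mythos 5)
Your overall architecture matches the paper's: you reduce all $k\geq 3$ to $k=3$ via the surjection $\nilhecke{<k}\twoheadrightarrow\nilhecke{<3}$, and your treatment of the noncrystallographic case -- identify $W$ with the complexification of a real affine Coxeter group, drop to ${\bf d}=(2,\dots,2)$, and read off the answer from Theorem~\ref{T3}, Theorem~\ref{T4} and Corollary~\ref{T6} (isolating $E_9=\widetilde{E}_8$ and $F_5=\widetilde{F}_4$ as the only affine members of Stembridge's FC-finite list) -- is exactly what the paper does.

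The gap is in part~(1), which is where essentially all of the work lies and which you explicitly defer (``a case-by-case representation-theoretic construction is likely needed''). Naming the obstacle is not the same as overcoming it: the theorem is not proved until every finite irreducible complex reflection group in \cite{BMR2} and every group in Malle's crystallographic tables has been checked. The paper closes this by observing that the infinite-rank modules already constructed for $k=\infty$ in \cite[Section 6, Cases 10--12 and 14]{Kh2} remain well-defined modules over $\nilhecke{<k}$ for $3\leq k<\infty$ -- and this requires a genuine verification, since passing to finite $k$ \emph{adds} relations (the killed braid words of length $\geq k$ must annihilate the module), so a $k=\infty$ module does not automatically descend. Your proposal does not flag this compatibility check at all. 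Indeed it fails for exactly one group, $G_{29}$, whose presentation contains braid relations of lengths $4$ and $6$; there the paper must build a new infinite-rank module (a three-cycle on the generators $t,u,v$ with a ``$+$'' edge, Figure~\ref{G29module}). Your heuristic that ``cycles in the braid diagram give periodic alternating monomials'' is consistent with this construction, but you neither identify which groups need it nor supply the representations for the cycle-free cases you single out as problematic. As written, part~(1) of your argument is a research plan rather than a proof.
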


In particular, the $k=4$ case of this result implies the $5 \leq k <
\infty$ cases -- as well as the $k=\infty$ case, which was one of the
main results in previous work \cite{Kh2}.

\begin{remark}
We also discuss the remaining case in \cite{Po1} of irreducible discrete
complex reflection groups -- wherein $W$ is ``non-genuine''
crystallographic. As we explain using recent results in \cite{IS}, the
Coxeter-type presentation that we use (from \cite{IS}) involves relations
whose left and right sides have unequal lengths, and so we do not
consider these algebras any further.
\end{remark}

Our final main result discusses which of the finite-dimensional
$\bk$-algebras are Frobenius. (For this result we assume that $\bk$ is a
field.) This line of investigation was first explored by Khovanov, who
pointed out \cite{Kho} that the ``usual'' nil-Coxeter algebra over any
finite Coxeter group $W$ is always Frobenius. To proceed further, we need
recall additional notation.

\begin{definition}
Given a nil-Hecke algebra $\nilhecke{<k}$, we say that an element
$x\in\nilhecke{<k}$ is \textit{left-primitive} if $\mathfrak m x=0$, and
\textit{right-primitive} if $x\mathfrak m=0$, where $\mathfrak m$ is the
ideal defined in Theorem \ref{TNil}. We call $x$ \textit{primitive} if it
is both right-primitive and left-primitive.
\end{definition}

Now our final main result characterizes the Frobenius algebras among all
finite-dimensional nil-Hecke algebras: (i)~in terms of their primitive
elements; and (ii)~for all $k \geq 1$. The latter means that our result
again extends the classification in the $k=\infty$ case in \cite{Kh2}.
In light of Theorem~\ref{Tcomplex}, we only consider Coxeter groups.

\begin{utheorem}\label{TFrobenius}
Fix a field $\bk$, a Coxeter group $W$ with related data $I,J,{\bf
S},\R$, integers $d_i \geqslant 2\ \forall i$, and $1 \leqslant k
\leqslant \infty$, such that the corresponding nil-Hecke algebra
$\nilhecke{<k}$ is finite-dimensional. Then the following are equivalent:
\begin{enumerate}
\item The algebra $\nilhecke{<k}$ is Frobenius.
\item The space of right-primitive (equivalently, left-primitive)
elements is one-dimensional.
\item The spaces of right-primitive, left-primitive, and primitive
elements are all the same and have dimension one.
\item Exactly one of the following holds:
(a)~$W$ is a Coxeter group with no braid relation of length bigger than
$k$, and $d_i=2$ for all $i$; or
(b)~$W$ is of type $A_1$, with $\mathbf d=(d)$ for some $d\geqslant 3$.
\end{enumerate}
\end{utheorem}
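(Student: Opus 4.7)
The plan is to reduce the equivalences (1) $\Leftrightarrow$ (2) $\Leftrightarrow$ (3) to standard socle theory for finite-dimensional local $\bk$-algebras, and to establish (1) $\Leftrightarrow$ (4) by a case analysis based on the classification in Theorem~\ref{T12} and Table~\ref{Ttable}. By Theorem~\ref{TNil}, the finite-dimensionality hypothesis makes $A := \nilhecke{<k}$ a local $\bk$-algebra with nilpotent maximal ideal $\m$ and residue field $\bk$, so that the space of right-primitive elements is precisely the right socle $\mathrm{soc}(A_A) = \{ x \in A : x \m = 0 \}$ (and similarly on the left). The classical theory of Frobenius algebras for finite-dimensional local $\bk$-algebras then yields: $A$ is Frobenius iff $\dim_\bk \mathrm{soc}(A_A) = 1$ iff $\dim_\bk \mathrm{soc}({}_A A) = 1$, and in that case the two socles coincide, as subsets of $A$, with the two-sided socle. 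This immediately gives (1) $\Leftrightarrow$ (2) $\Leftrightarrow$ (3).

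For (4) $\Rightarrow$ (1): in case (4a), no braid relation of length $\geq k$ is killed in the defining ideal, hence $\nilhecke{<k}$ coincides with the usual nil-Coxeter algebra $NC_W((2,\dots,2))$ of the finite Coxeter group $W$, which is Frobenius by Khovanov's classical result \cite{Kho}, with socle spanned by the image of the longest element $w_\circ$. In case (4b) the algebra is the truncated polynomial ring $\bk[{\bf s}_1]/({\bf s}_1^d)$, manifestly Frobenius with socle $\bk \cdot {\bf s}_1^{d-1}$.

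The principal direction is (2) $\Rightarrow$ (4). I would go through each entry of Theorem~\ref{T12} and Table~\ref{Ttable} not captured by (4), and in each such case exhibit two $\bk$-linearly independent right-primitive elements -- which violates (2). The dihedral case $W = I_2(m)$ with $m \geq k$ is illustrative: the two distinct length-$(m-1)$ alternating words ${\bf s}_1 {\bf s}_2 {\bf s}_1 \cdots$ and ${\bf s}_2 {\bf s}_1 {\bf s}_2 \cdots$ are both primitive, since multiplying either on either side by any ${\bf s}_i$ produces $0$ (via ${\bf s}_i^2 = 0$) or the killed length-$m$ braid word. Analogous but case-specific pairs arise for the nil-Temperley--Lieb algebras at $k=3$ (where two distinct ``longest fully commutative'' words give the primitives), for the exceptional finite-dimensional algebras at $k=4,5$ (namely $B_n$ with $k=4$, and $F_4, H_3, H_4$), and for the generalized XYX algebras $NC_A(n,d)$ with $n \geq 2$ and $d \geq 3$, where the structure theory developed in \cite{Kh2} provides two linearly independent primitives. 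The expected main obstacle is precisely this non-uniform case analysis, as there is no single unified reason for the failure of the Frobenius property; however, once the explicit monomial bases from the proofs of Theorems~\ref{T12}, \ref{T3}, \ref{T4} and Corollaries~\ref{T5}, \ref{T6} are in hand, each case reduces to a short combinatorial verification.
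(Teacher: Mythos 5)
Your proposal is essentially correct, but it handles the equivalence $(1)\Leftrightarrow(2)\Leftrightarrow(3)$ by a genuinely different route than the paper. The paper proves $(1)\Rightarrow(2)$ by hand: given a nondegenerate invariant form $\sigma$, it shows the functional $\sigma(-,1)$ is injective on the space of right-primitive elements (if $\sigma(a-b,1)=0$ for right-primitives $a,b$, then $\sigma(a-b,c)=0$ for all $c$), and it obtains $(2)\Rightarrow(1)$ only indirectly, by first proving $(2)\Leftrightarrow(4)$ (Theorem~\ref{TRightPrim}) and then $(4)\Rightarrow(1)$ via Khovanov and the truncated polynomial ring. You instead invoke Theorem~\ref{TNil} to see that $A=\nilhecke{<k}$ is local with residue field $\bk$, identify the right-primitive elements with $\mathrm{soc}(A_A)$, and appeal to the standard fact that a finite-dimensional local $\bk$-algebra with residue field $\bk$ is Frobenius if and only if its right socle is one-dimensional (the inclusion $A_A\hookrightarrow E(\mathrm{soc}(A_A))=D({}_AA)$ is then an equality by dimension count), together with the coincidence of left and right socles for Frobenius algebras. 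This is cleaner: it gives $(2)\Rightarrow(1)$ directly without routing through the classification, and packages $(1)\Leftrightarrow(3)$ at the same time. The cost is that the entire classification content of the theorem is then concentrated in $(2)\Rightarrow(4)$, which in both approaches is an unavoidable case-by-case exhibition of two independent one-sided primitives; the paper organizes this into two stages (Theorems~\ref{TPrim} and~\ref{TRightPrim} with explicit monomial tables).

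One caveat in your sketch of the case analysis: you must consistently produce two linearly independent \emph{right}-primitive elements, not two \emph{primitive} (two-sided) ones. In several of the excluded cases the space of two-sided primitives genuinely \emph{is} one-dimensional while the right-primitive space is not -- e.g.\ type $A_2$ with $\mathbf d=(3,2)$ and $k\geqslant 3$ (where $\mathbf{s}_1\mathbf{s}_2$ and $\mathbf{s}_1^2\mathbf{s}_2$ are right- but not left-primitive), type $B_n$ at $k\in\{1,2\}$, and $H_3$ at $k=3,4$. So your remark that the generalized type-$A$ algebras admit ``two linearly independent primitives'' fails for $A_2$ with $d=3$; the argument goes through only because two right-primitives still exist there. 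This is precisely why the paper separates Theorem~\ref{TPrim} from Theorem~\ref{TRightPrim}.
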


Once again, the $k=3$ case subsumes ``one implication'' of the $k=\infty$
case shown originally in \cite{Kh2}. Moreover, Theorem~\ref{TFrobenius}
shows that the nil-Coxeter algebras occupy a distinguished position among
the nil-Temperley--Lieb and other nil-Hecke algebras (even when all $d_i
= 2$): the others are not Frobenius.

\subsection*{Organization}

This paper is organized as follows.
We begin by providing several motivations and connections to prior works
in the following section, as well as by producing a monomial word
$\bk$-basis for $\nilhecke{<k}$ for general $W$, ${\bf d} \in
(\Z^{\geqslant 2})^I$, and $1 \leqslant k \leqslant \infty$. The
remaining sections are devoted to proving the main theorems above.
In an Appendix, we provide SAGE code that helped verify some of our
results computationally.

\subsection*{Acknowledgments}
The second author was partially supported by Ramanujan Fellowship grant
SB/S2/RJN-121/2017, MATRICS grant MTR/2017/000295, and SwarnaJayanti
Fellowship grants SB/SJF/2019-20/14 and DST/SJF/MS/2019/3 from SERB and
DST (Govt.\ of India).

\section{Flat deformations, other connections, and a word basis}\label{S2}

In this section we explain how the problem under consideration, and more
broadly, nil-Hecke algebras, are connected to several other areas. We
begin with a categorical connection: even before considering
finite-dimensionality, the entire lattice of nil-Hecke algebras $\{
\nilhecke{0} : {\bf d} \in (\Z^{\geqslant 2})^I, \ J_0 \subset J \}$
yields examples of co-commutative algebras with coproduct, which are not
bialgebras.

\begin{prop}\label{Pnotbialg}
Let $A := \nilhecke{0}$ for some ${\bf d} \in (\Z^{\geqslant 2})^I$ and
$J_0 \subset J$. If $\bk$ is a field of characteristic zero, then $A$ is
not a bialgebra.
\end{prop}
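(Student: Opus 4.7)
The plan is to suppose for contradiction that $A$ carries a bialgebra structure with coproduct $\Delta$ and counit $\epsilon$, and derive a contradiction by applying $\Delta$ to one of the relations ${\bf s}_i^{d_i} = 0$. Since $\epsilon$ is an algebra map and ${\bf s}_i$ is nilpotent, $\epsilon({\bf s}_i)^{d_i} = 0$ forces $\epsilon({\bf s}_i) = 0$, so $\epsilon$ vanishes on the two-sided augmentation ideal $\m \subset A$ (which has codimension one); the counit axioms then yield the standard decomposition
\[
\Delta({\bf s}_i) = {\bf s}_i \otimes 1 + 1 \otimes {\bf s}_i + r_i, \qquad r_i \in \m \otimes \m.
\]

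The central step is a leading-order analysis with respect to the $\m$-adic filtration $F^n := \sum_{p+q \geqslant n} \m^p \otimes \m^q$ on $A \otimes A$. Since every defining relation of $A$ is homogeneous in word length, $A$ carries a natural grading with $A_0 = \bk$ and generators in degree one, so $\m^k = A_{\geqslant k}$, $\m^k / \m^{k+1} \cong A_k$, and $F^n / F^{n+1} \cong \bigoplus_{p+q=n} A_p \otimes A_q$. Applying $\Delta$ (an algebra map) to ${\bf s}_i^{d_i} = 0$ gives $({\bf s}_i \otimes 1 + 1 \otimes {\bf s}_i + r_i)^{d_i} = 0$. The summands of $y_i := {\bf s}_i \otimes 1 + 1 \otimes {\bf s}_i$ commute, $y_i \in F^1$, and $r_i \in F^2$, so every term of the binomial-type expansion containing a factor of $r_i$ lies in $F^{d_i+1}$; reducing modulo $F^{d_i+1}$ and using ${\bf s}_i^{d_i} = 0$ to discard the endpoint ($k=0,d_i$) contributions gives
\[
\sum_{k=1}^{d_i-1} \binom{d_i}{k}\, {\bf s}_i^k \otimes {\bf s}_i^{d_i - k} \;\equiv\; 0 \pmod{F^{d_i+1}}.
\]

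The summand ${\bf s}_i^k \otimes {\bf s}_i^{d_i-k}$ lies in the bidegree $(k, d_i-k)$ piece $A_k \otimes A_{d_i-k}$ of $F^{d_i}/F^{d_i+1}$; since these bidegrees are distinct for distinct $k$, the components must vanish separately, forcing $\binom{d_i}{k}\, {\bf s}_i^k \otimes {\bf s}_i^{d_i-k} = 0$ in $A_k \otimes A_{d_i-k}$ for each $1 \leqslant k \leqslant d_i-1$. In characteristic zero $\binom{d_i}{k} \neq 0$, and the required nonvanishing ${\bf s}_i^k \neq 0$ in $A_k$ follows from the monomial word basis established later in this section, or more directly from the surjection $A \twoheadrightarrow \bk[t]/(t^{d_i})$ sending ${\bf s}_i \mapsto t$ and ${\bf s}_j \mapsto 0$ for $j \neq i$, which is well defined because every relation in $\R$ involves two distinct generators and so has both sides mapping to $0$. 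Hence ${\bf s}_i^k \otimes {\bf s}_i^{d_i-k} \neq 0$ in $A_k \otimes A_{d_i-k}$, the desired contradiction. The main obstacle is the filtration bookkeeping that isolates the leading term of $\Delta({\bf s}_i)^{d_i}$ free of any contribution from the unknown remainder $r_i$; once that is in place, the bidegree decomposition of $F^{d_i}/F^{d_i+1}$ makes the contradiction transparent.
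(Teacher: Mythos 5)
Your proposal is correct and follows essentially the same route as the paper's (sketched) proof: force the counit to vanish on the generators, deduce $\Delta(\mathbf{s}_i) \in \mathbf{s}_i \otimes 1 + 1 \otimes \mathbf{s}_i + \m \otimes \m$, apply $\Delta$ to $\mathbf{s}_i^{d_i}=0$, and extract the middle binomial terms $\sum_{k=1}^{d_i-1}\binom{d_i}{k}\mathbf{s}_i^k \otimes \mathbf{s}_i^{d_i-k}$ to get a contradiction in characteristic zero. Your filtration/bidegree bookkeeping makes precise what the paper dismisses as ``higher degree terms,'' and your surjection $A \twoheadrightarrow \bk[t]/(t^{d_i})$ is in fact a cleaner justification that $\mathbf{s}_i^k \neq 0$ for all $k < d_i$ than the paper's appeal to a quotient of $NC_W((2,\dots,2))$, which on its face only controls the first power.
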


\begin{proof}
We provide a sketch, as the proof is similar to that of \cite[Proposition
3.1]{Kh2}. Since each $d_i < \infty$, the only possible counit
$\varepsilon : A \to \bk$ sends each ${\bf s}_i$ to $0$. Let
$\m$ denote the two-sided augmentation ideal in $A$, generated by $\{
{\bf s}_i : i \in I \}$; then one shows that any coproduct $\Delta : A
\to A \otimes A$ must satisfy:
\[
\Delta({\bf s}_i) \in 1 \otimes {\bf s}_i + {\bf s}_i \otimes 1 + \m
\otimes \m.
\]
But now,
\[
0 = \Delta({\bf s}_i)^{d_i} = \sum_{k=1}^{d_i - 1} \binom{d_i}{k} {\bf
s}_i^k \otimes {\bf s}_i^{d_i - k} + \text{ higher degree terms},
\]
and this is impossible because $\nilhecke{0}$ surjects onto the quotient
of the usual nil-Coxeter algebra $NC_W((2,\dots,2))$ by some braid words
(each of length at least $2$), so the ${\bf s}_i$ have nonzero images in
this quotient.
\end{proof}

A consequence of this result is that the module category $\Rep A$ (with
$A = \nilhecke{0}$) cannot be a tensor category. But since $\Delta : A
\to A \otimes A$ is co-commutative, $\Rep A$ is a symmetric semigroup
category. We refer the reader to \cite[Theorem A]{Kh2} and
\cite[Proposition 14.2 and Theorem 18.3]{ES} for more on the Tannakian
formalism behind such categories. Additionally, we mention for
completeness how to pass to an ``honest'' tensor category from $A$.
Define a central extension of the graded algebra $A$ by an element ${\bf
s}_\infty$:
\[
0 \to \bk {\bf s}_\infty \to \widetilde{A} \to A \to 0,
\]
with the non-graded $\bk$-algebra $\widetilde{A}$ generated by ${\bf
s}'_i, i \in I$ and ${\bf s}_\infty$ via
\[
({\bf s}'_i)^{d_i} = {\bf s}'_i {\bf s}_\infty = {\bf s}_\infty {\bf
s}'_i = {\bf s}_\infty^2 = {\bf s}_\infty,\ \forall i \in I,
\]
and the analogues for the ${\bf s}'_i$ of the braid word relations:
\[
{\bf v}'_j - {\bf w}'_j = 0, \ \forall j \in J_0, \qquad
{\bf v}'_l = {\bf w}'_l = {\bf s}_\infty, \forall l \in J \setminus J_0.
\]
Then $\widetilde{A}$ is indeed a bialgebra, under:
\[
\Delta({\bf s}'_i) := {\bf s}'_i \otimes {\bf s}'_i, \quad
\Delta({\bf s}_\infty) := {\bf s}_\infty \otimes {\bf s}_\infty, \quad
\varepsilon({\bf s}_i) = \varepsilon({\bf s}'_\infty) = 1, \qquad
\forall i \in I,
\]
and hence $\Rep \widetilde{A}$ is a monoidal category. We refer the
reader to \cite[Section 3]{Kh2} for more details in the special case $J_0
= J$.
Notice also that the algebras $\widetilde{A} =
\widetilde{n\mathcal{H}}(W, {\bf d}, J_0)$ form a lattice of bialgebras
that is isomorphic to the product of $2^J$ with $I$ copies of
$\Z^{\geqslant 2}$.\medskip

A second theme, extensively explored for decades in the Coxeter/Hecke/Lie
theory literature, involves classifying finite-dimensional objects in
various settings. Such classifications have been of enormous interest in
recent and earlier times -- including
complex simple Lie algebras;
real and complex reflection groups \cite{Cox,ST} and their nil-Coxeter
and more general Hecke algebras;
finite type quivers,
the McKay--Slodowy correspondence, and
Kleinian singularities.
This evergreen theme has seen recent additions, including for
finite-dimensional Nichols algebras \cite{GHV,HV1,HV2}; and
finite-dimensional pointed Hopf algebras \cite{AnSc}, which are
intimately connected to small quantum groups. There are also other
combinatorial-type results, including by Stembridge and Hart, that are
discussed presently; and the second author's recent work classifying the
finite-dimensional generalized nil-Coxeter algebras \cite{Kh2}. The
present work is a sequel to this last reference \cite{Kh2}, connecting it
with the works by Stembridge and Hart,
and also going beyond \cite{Kh2} via refined combinatorial phenomena for
$k=4,5$ even in the ``usual'' (i.e., ${\bf d} = (2,\dots,2)$) Coxeter
types $B_n, F_4, H_3, H_4$.\medskip

We next mention a connection with the question of \textit{flatness of
deformations}.
 In this paper, our goal is to explore the flatness question for e.g.\
the Temperley--Lieb algebras and their nil-Hecke versions. For instance,
in their theses Fan and Graham studied Temperley--Lieb algebras
$TL_W((2,\dots,2))$ for Coxeter groups $W$ (see \cite{Fan1,Fan2,Gr}), say
with $\bk = \Z[u,u^{-1}]$. These are quotients of the Iwahori--Hecke
algebras $\mathcal{H}_u(W)$; the authors showed in \textit{loc.~cit.}
that $TL_W((2,\dots,2))$ has a $\bk$-basis in bijection with the fully
commutative words in $W$. Now Theorem \ref{T3} shows that
$TL_W((2,\dots,2))$ is indeed a flat deformation of $NTL_W((2,\dots,2))$,
meaning that the following diagram is (i) valid, and (ii) a commuting
square:
\[ \begin{CD}
\mathcal{H}_u(W) @>{\rm gr} >> NC_W((2,\dots,2))\\
@VVV @VVV\\
TL_W((2,\dots,2)) @>{\rm gr} >> NTL_W((2,\dots,2))
\end{CD} \]

\noindent More strongly, we will show in Theorem \ref{Tdim} that the
above flatness phenomenon holds for nil-Hecke algebras
$n\mathcal{H}(W,(2,\dots,2),J_{<k})$ for \textit{all} $k$. We then
extend this to arbitrary ${\bf d}$ in Theorem~\ref{Tdimplus}.\medskip

Our next connection is to the combinatorics of Coxeter groups $W$.
Stembridge \cite{St1} studied the fully commutative elements $W_{\rm
fc}$(see Theorem~\ref{T3} for the definition) in connection with the
Bruhat and weak orderings. In \cite{KMMZ} the authors study the $2$-sided
Kazhdan--Lusztig cell formed by the words $W_1 \subset W$ that have a
unique reduced expression. These words have been counted recently in
\cite{Ha}. In fact all words in the group $W$ have been studied for the
above reasons.

We now introduce a more general notion of word-sets $W(J_0) \subset W$
for any Coxeter group, of which the above sets $W_{\rm fc}, W_1, W$ are
special cases for $J_0 = J_{<3}, J_{<2}, J_{<\infty}$ respectively.

\begin{definition}\label{Ddim}
Given a Coxeter matrix $M$ with corresponding group $W$, define for $J_0
\subset \binom{I}{2}$ the subset of elements $W(J_0) \subset W$, to
consist of all $w \in W$ such that no reduced expression for $w$ has a
substring of positive length that occurs in the set $\{ {\bf v}_j, {\bf
w}_j : j \in J \setminus J_0 \}$.
\end{definition}

With this notation in hand, we can state the promised result on flatness
of deformations of Temperley--Lieb algebras -- and more generally, relate
every subset $W(J_0)$ to the corresponding nil-Hecke algebra
$n\mathcal{H}(W, (2,\dots,2), J_0)$.

\begin{theorem}\label{Tdim}
Suppose $M$ is a Coxeter matrix, and $J_0 \subset \binom{I}{2}$ as above.
Then the nil-Hecke algebra $n\mathcal{H}(W, (2,\dots,2), J_0)$ is a free
$\bk$-module with basis $\{ \overline{{\bf s}_w} : w \in W(J_0) \}$,
where $\{ {\bf s}_w : w \in W \}$ is the ``canonical'' basis of the
nil-Coxeter algebra $NC_W((2,\dots,2))$.
\end{theorem}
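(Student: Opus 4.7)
The plan is to realize $n\mathcal{H}(W,(2,\ldots,2),J_0)$ as an explicit quotient of the classical nil-Coxeter algebra $NC_W((2,\ldots,2))$ and then identify the kernel basis-theoretically. Since all braid relations $\mathbf{v}_j=\mathbf{w}_j$ for $j\in J$ already hold in $NC_W((2,\ldots,2))$, the passage from $NC_W((2,\ldots,2))$ to $n\mathcal{H}(W,(2,\ldots,2),J_0)$ only imposes the additional relations $\mathbf{v}_j=0$ (equivalently $\mathbf{w}_j=0$) for $j\in J\setminus J_0$. Letting $\mathcal{I}\subset NC_W((2,\ldots,2))$ denote the two-sided ideal generated by $\{\mathbf{v}_j:j\in J\setminus J_0\}$, we thus have $n\mathcal{H}(W,(2,\ldots,2),J_0)=NC_W((2,\ldots,2))/\mathcal{I}$. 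The goal reduces to proving
\[
\mathcal{I}\;=\;\bigoplus_{w\in W\setminus W(J_0)}\bk\,\mathbf{s}_w,
\]
where $\{\mathbf{s}_w:w\in W\}$ is the canonical basis of $NC_W((2,\ldots,2))$ (whose existence, via Matsumoto's theorem, I will invoke as standard).

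For the containment $\mathcal{I}\subseteq\bigoplus_{w\notin W(J_0)}\bk\mathbf{s}_w$, I will use the multiplication rule of the nil-Coxeter algebra: $\mathbf{s}_u\mathbf{s}_v=\mathbf{s}_{uv}$ if $\ell(uv)=\ell(u)+\ell(v)$, and $0$ otherwise. For $j\in J\setminus J_0$, write $\mathbf{v}_j=\mathbf{s}_{v_j}$ for the element $v_j\in W$ with that given reduced word. Then for any $u,v\in W$, the product $\mathbf{s}_u\mathbf{v}_j\mathbf{s}_v$ is either $0$ or equals $\mathbf{s}_{uv_jv}$ with $\ell(uv_jv)=\ell(u)+\ell(v_j)+\ell(v)$; in the latter case, concatenating reduced expressions for $u$, $v_j$, and $v$ produces a reduced expression for $uv_jv$ containing the substring $\mathbf{v}_j$, which witnesses $uv_jv\notin W(J_0)$. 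Since $\mathcal{I}$ is spanned by such triple products, this direction follows.

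For the reverse containment, fix $w\notin W(J_0)$ and choose a reduced expression $\mathbf{s}_{i_1}\cdots\mathbf{s}_{i_k}$ for $w$ containing some $\mathbf{v}_j$ or $\mathbf{w}_j$ (with $j\in J\setminus J_0$) as a substring; using $\mathbf{w}_j=\mathbf{v}_j$ in $NC_W((2,\ldots,2))$ reduces to the $\mathbf{v}_j$ case. Splitting the expression into the three pieces before, equal to, and after this substring, and invoking the independence of $\mathbf{s}_w$ from the chosen reduced word, yields $\mathbf{s}_w=\mathbf{s}_u\cdot\mathbf{v}_j\cdot\mathbf{s}_v\in\mathcal{I}$ for appropriate $u,v\in W$.

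Combining the two containments gives $\mathcal{I}=\bigoplus_{w\notin W(J_0)}\bk\mathbf{s}_w$, so the images $\{\overline{\mathbf{s}_w}:w\in W(J_0)\}$ descend to a free $\bk$-basis of the quotient, proving the theorem. The only genuinely delicate step is the verification that the multiplication $\mathbf{s}_u\mathbf{v}_j\mathbf{s}_v$ always lands in the claimed span; this rests on the nil-Coxeter algebra's rigid basis structure (Matsumoto's theorem), which is the main technical input and is used freely above.
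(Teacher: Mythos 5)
Your proposal is correct and follows essentially the same route as the paper: both identify $n\mathcal{H}(W,(2,\dots,2),J_0)$ as the quotient of $NC_W((2,\dots,2))$ by the two-sided ideal generated by the killed braid words, and both show (using length-additivity of the monomial basis $\{\mathbf{s}_w\}$ and the reduced-word characterization of $W(J_0)$) that this ideal is exactly $\bigoplus_{w\notin W(J_0)}\bk\,\mathbf{s}_w$. The paper phrases the ideal as the span of $\alpha\mathbf{v}_j\beta$, $\alpha\mathbf{w}_j\beta$ with $\alpha,\beta$ in the monoid $\{\mathbf{s}_w\}\sqcup\{0\}$, which is the same object as your $\mathcal{I}$.
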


\noindent In particular, by \cite{Fan1,Fan2,Gr}, Temperley--Lieb algebras
are flat deformations of their nil-Temperley--Lieb analogues.

Theorem \ref{Tdim} thus provides a broader setting into which the
families of words $W(J_0)$ fit: bases of nil-Hecke algebras corresponding
to Coxeter matrices. Even more generally, we will consider (bases of)
nil-Hecke algebras for generalized Coxeter matrices, i.e.~where ${\bf
s}_i^{d_i} = 0$ for $d_i \geqslant 2$. Beyond proving Theorem \ref{Tdim},
in the remaining sections we will characterize when such bases are finite
in size.

\begin{proof}[Proof of Theorem \ref{Tdim}]
Begin by observing that the subset $M$ in $NC_W(2,\dots,2))$ consisting
of the words
\[
M := \{ {\bf s}_w : w \in W \} \sqcup \{ 0 \}
\]
forms a monoid under multiplication; and moreover, $M \setminus \{ 0 \}$
forms a $\bk$-basis of $NC_W((2,\dots,2))$ (see e.g.\ \cite{Hum}). Now
let $I(J_0)$ be the $\bk$-span in $NC_W((2,\dots,2))$ of the words
\[
\{ \alpha {\bf v}_j \beta,\ \alpha {\bf w}_j \beta\ :\ j \in J \setminus
J_0, \ \alpha, \beta \in M \}.
\]
Then $I(J_0)$ clearly contains the elements ${\bf s}_w,\ w \not\in
W(J_0)$; moreover, it is a two-sided ideal of $NC_W((2,\dots,2))$. Now
suppose a linear combination $\sum_{w \in W} c_w {\bf s}_w \in I(J_0)$.
By definition/$\bk$-freeness, if $c_w \neq 0$ then $w \not\in W(J_0)$,
and $I(J_0)$ intersects the $\bk$-span of $\{ {\bf s}_w : w \in W(J_0)
\}$ trivially. This concludes the proof.
\end{proof}


The above theorem provides a natural basis for nil-Hecke algebras with
$\mathbf{d}=(2,2,\ldots ,2)$. We now strengthen this result to one which
applies to all of our nil-Hecke algebras, and which is key to proving the
finite-dimensionality in all of our main results.

\begin{theorem}\label{Tdimplus}
Let $W$ be a Coxeter group generated by $\{ s_i : i \in I \}$, with
corresponding nil-Hecke algebra $\nilhecke{<k}$, where ${\bf d} \in
(\Z^{\geqslant 2})^I$ and $1 \leqslant k \leqslant \infty$. Consider
equivalence classes of strings in the $s_i$, where two strings are
equivalent if one can be reached from another by applying the braid
relations finitely many times. Let $S$ denote the set of classes such
that no string in the class contains any of the substrings $s_i^{d_i}$ or
the braid words of length $k$ or more. Then the monomials corresponding
to some choice of class representatives from $S$ form a free $\bk$-module
basis of $\nilhecke{<k}$.
\end{theorem}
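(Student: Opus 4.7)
The plan is to realize $\nilhecke{<k}$ explicitly as a quotient of the positive-braid-monoid algebra $\bk\mathcal{B}_W^+$, where $\mathcal{B}_W^+$ denotes the positive braid monoid with monoid presentation $\langle s_i,\, i\in I \mid {\bf v}_j={\bf w}_j,\, j\in J\rangle$. By construction its elements are precisely the equivalence classes of strings in the $s_i$ described in the theorem, and the monoid algebra $\bk\mathcal{B}_W^+$ is a free $\bk$-module on these classes. The first step is to observe that every braid relation ${\bf v}_j={\bf w}_j$ continues to hold in $\nilhecke{<k}$ -- genuinely when $j\in J_{<k}$, and as the trivial identity $0=0$ when $j\in J\setminus J_{<k}$. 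Hence the canonical surjection from the free algebra on the $s_i$ factors through $\bk\mathcal{B}_W^+$, and $\nilhecke{<k}$ is identified with $\bk\mathcal{B}_W^+/K$, where $K$ is the two-sided ideal of $\bk\mathcal{B}_W^+$ generated by the finite set
\[
T\ :=\ \{[s_i^{d_i}] : i\in I\}\ \cup\ \{[{\bf v}_j] : j\in J\setminus J_{<k}\}\ \subseteq\ \mathcal{B}_W^+
\]
(using $[{\bf v}_j]=[{\bf w}_j]$ in $\mathcal{B}_W^+$).

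Next I would invoke the elementary observation that for any monoid $N$ and subset $T\subseteq N$, the two-sided ideal of $\bk N$ generated by $T$ is the $\bk$-span of $NTN\subseteq N$, and therefore has a $\bk$-basis given by this subset of $N$. Applied to $N=\mathcal{B}_W^+$, the ideal $K$ has $\bk$-basis equal to the set of classes $[z]\in\mathcal{B}_W^+$ admitting at least one representative that contains some $s_i^{d_i}$, ${\bf v}_j$, or ${\bf w}_j$ (for $j\in J\setminus J_{<k}$) as a contiguous substring -- which is precisely the complement $\mathcal{B}_W^+\setminus S$. The equivalence of ``$[z]=[u][t][v]$ for some $t\in T$'' and ``some representative of $[z]$ literally contains a forbidden substring'' is immediate, by replacing representatives within $[u]$, $[t]$, $[v]$ and then concatenating. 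Since $\mathcal{B}_W^+$ partitions as $S\sqcup(\mathcal{B}_W^+\setminus S)$ and the latter set is a $\bk$-basis of $K$, the former descends to a $\bk$-basis of $\bk\mathcal{B}_W^+/K=\nilhecke{<k}$; any choice of one representative monomial per class in $S$ thus furnishes the asserted basis.

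The main conceptual step demanding care in this plan is the identification of $\mathcal{B}_W^+$ itself -- namely, that the equivalence relation in the theorem statement really is the congruence generated by the braid relations, and correspondingly that $\bk\mathcal{B}_W^+$ is free as a $\bk$-module on the equivalence classes of strings. This is essentially forced by the universal property of a monoid given by generators and relations, and matches the standard theory of Artin--Tits positive braid monoids (Brieskorn--Saito). Once this is granted, the remainder of the argument is a formal manipulation of two-sided ideals in a monoid algebra, and both the spanning and linear-independence halves of the theorem follow simultaneously from the direct-sum decomposition $\bk\mathcal{B}_W^+ = (\bk\text{-span of } S)\oplus K$.
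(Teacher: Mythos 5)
Your argument is correct, and it reaches the conclusion by a genuinely different route from the paper's. The paper proves spanning by the same reduction-to-zero observation, but establishes linear independence by building an explicit cyclic $\nilhecke{<k}$-module $\mathscr{M}$ with free $\bk$-basis indexed by $S$ (each generator acts by left concatenation, killing anything that leaves $S$), verifying the defining relations on every basis vector, and then deducing $\dim\nilhecke{<k}\geqslant|S|$ from the surjection $\nilhecke{<k}\twoheadrightarrow\mathscr{M}$. You instead work at the level of the algebra itself: you present $\nilhecke{<k}$ as $\bk\mathcal{B}_W^+/K$ with $K$ generated by actual monoid elements, and invoke the elementary fact that a two-sided ideal of a monoid algebra generated by a subset $T$ of the monoid is $\bk$-spanned by the subset $NTN$ of the monoid, yielding the direct-sum decomposition $\bk\mathcal{B}_W^+=(\bk\text{-span of }S)\oplus K$. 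The two arguments are close cousins --- your $\bk$-span of $S$ with its induced multiplication is essentially the paper's $\mathscr{M}$ viewed as the regular representation --- but yours settles spanning and independence in one stroke and replaces the case-by-case verification of the module relations with a single structural lemma about monomial ideals in monoid algebras; the paper's version has the advantage of fitting into the diagrammatic module-construction calculus that is reused throughout the later sections to prove infinite-dimensionality. One step you should spell out rather than assert is that the kernel of $\bk\mathcal{B}_W^+\twoheadrightarrow\nilhecke{<k}$ is exactly $K$ and not larger: this amounts to checking that $\bk\mathcal{B}_W^+/K$ already satisfies all defining relations of $\nilhecke{<k}$ (for $j\in J\setminus J_{<k}$ one needs $\mathbf{w}_j=0$ as well, which follows since $[\mathbf{w}_j]=[\mathbf{v}_j]\in T$), so that the quotient map factors in both directions; it is routine, but it is precisely where the presentation of $\nilhecke{<k}$ is used.
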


\begin{proof}
Since any monomial not coming from $S$ can be reduced to zero, these
monomials clearly span $\nilhecke{<k}$. It remains to prove that $\dim
\nilhecke{<k} \geqslant |S|$, and we would be done.

Consider a free $\bk$-module $\mathcal M$ with basis elements indexed by
$S$. We now define a natural $\nilhecke{<k}$-module structure on
$\mathcal M$ as follows: the element $\mathbf{s}_j$ acts on the class
$[s_{i_1}\cdots s_{i_n}]$ by sending it to the class $[s_js_{i_1}\cdots
s_{i_n}]$ if it is in $S$, and to zero otherwise. This extends to an
action of the free associative algebra $\bk\langle \mathbf s_i\mid i\in
I\rangle$.

We now verify this action indeed satisfies the defining relations in
$\nilhecke{<k}$. Indeed, if applying the $\mathbf{s}_i$-action $d_i$
times on a basis element yields a non-zero basis element, then this class
contains a string with $d_i$ consecutive leading $s_i$'s and thus cannot
be in $S$, a contradiction. Similarly, multiplying by braid words of
length $k$ or more gives the zero vector. Finally, the braid relations of
lengths less than $k$ hold: indeed, multiplying a basis element by either
side of the braid equations either yields elements in the same
equivalence class by definition, or yields elements not in $S$, giving
the zero vector on either side.

Since $\mathcal M$ is generated as a $\nilhecke{<k}$-module by the basis
vector corresponding to the empty string, this gives a surjection
$\nilhecke{<k}\twoheadrightarrow\mathcal M$. This, combined with the
first paragraph, completes our proof.
\end{proof}

Following Theorem~\ref{Tdimplus} on a word basis of $\nilhecke{<k}$, we
describe here a final connection of this work, to the study of PBW bases
and deformations -- more broadly than the commuting square in this
section. The study of PBW (Poincar\'e--Birkhoff--Witt) type deformations
has seen intense activity for several decades now, including foundational
works on Drinfeld Hecke algebras \cite{Dr}, graded affine Hecke algebras
\cite{Lu}, symplectic reflection algebras \cite{EG} (including rational
Cherednik algebras), as well as on infinitesimal Hecke algebras, quantum
analogues, and their generalizations. The recent program of Shepler and
Witherspoon (and their coauthors) has led to a profusion of activity; we
mention e.g.\ \cite{SW2,SW4,WW} here. In all of these works, a bialgebra
$A$ (which is most often a Hopf algebra) acts on a symmetric algebra
$S_V$ of some vector space $V$, and one is interested in understanding
which deformations of the smash-product algebra $A \ltimes S_V$ are
``PBW'', or flat.

When $A$ is a nil-Hecke algebra, there are several significant features
to note. First, these are not bialgebras by Proposition~\ref{Pnotbialg}
(but possess a coproduct $\Delta$); and yet, a variant of the ``PBW
theorem'' nevertheless holds for the algebras $A \ltimes S_V$, which
subsumes the PBW theorems in various previous works. (See the previous
paper \cite{Kh} by the second author for the details.) Thus the nil-Hecke
algebras $\nilhecke{0}$ widen beyond \cite{Kh2} the class of examples of
such ``algebras with coproduct'' $(A,\Delta)$, which do not possess an
antipode or even a counit, yet fit into the framework of the
aforementioned works.

Second, there also are technical consequences of finite-dimensionality.
We mention two of these; in both, we will assume $\bk$ to be a field. It
was shown in \cite{SW4}, \cite{Kh} that if $(A,\Delta)$ is
finite-dimensional, one can characterize those graded
$\bk[t]$-deformations of $A \ltimes S_V$, for which the fiber at $t=1$
satisfies the PBW property. This provides a PBW-theoretic motivation to
classify the finite-dimensional nil-Hecke algebras.

Even more is true. It was shown in \cite{Kh} that if $(A,\Delta)$ is any
algebra with coproduct (e.g. $A = \nilhecke{<k}$), and if it is local
with augmentation ideal $\m$ such that $\Delta(\m) \subset \m \otimes
\m$, then one obtains much additional information regarding the
deformations of $A \ltimes S_V$ -- their abelianization, center, and
(simple) modules. See e.g.\ \cite[Section 6.1]{Kh}. Thus, we have a
second motivation from the perspective of PBW deformations -- to
understand which nil-Hecke algebras $\nilhecke{<k}$ are local. This is
precisely the point of Theorem~\ref{TNil}; and it shows that this
question turns out to be \textit{equivalent} to the main results of the
paper: $\nilhecke{<k}$ must be finite-dimensional.

\section{Proof of Theorem \ref{T12}:
finite rank nil-Hecke algebras with $k=1,2$}

The remainder of the paper is devoted to proving the above classification
theorems on finite-dimensional nil-Hecke algebras. To do so, we will
employ the diagrammatic calculus utilized in \cite{Kh2}. We begin by
showing Theorem \ref{T12} in this section; the proof is in steps.\medskip

\noindent \textbf{Step 1:}
By Theorem \ref{Tdim}, $\nilhecke{<2}$ has finite $\bk$-rank if and only
if $W(J_{<2})$ is also finite. By using the results of \cite{Ha}, this
proves the result if ${\bf d} = (2,\dots,2)$, i.e., when dealing with
quotients of usual nil-Coxeter algebras.\medskip

\noindent \textbf{Step 2:}
In the remainder of the proof, at least one $d_i$ is $3$ or more. Since
$\nilhecke{0} \twoheadrightarrow n\mathcal{H}(W, (2,\dots,2), J_0)$, by
the previous step the Coxeter (multi)graph of $W$ must be a tree with no
$m_{ij} = \infty$ and at most one $m_{ij} \geqslant 4$.

Suppose first that we have a simply laced tree, and $d_\alpha, d_\gamma
\geqslant 3$ for $\alpha \neq \gamma \in I$. As the graph of $W$ is
connected, suppose
\[
\alpha \quad \longleftrightarrow \quad \beta_1 \quad \longleftrightarrow
\quad \cdots \quad \longleftrightarrow \quad \beta_{m-1} \quad
\longleftrightarrow \quad \gamma
\]

\noindent is a path in $I$ (in the figures below, we write $m' = m-1$).
Now define a free $\bk$-module $\scrm$ with basis vectors
\[
\{ A_r, B_{1r}, \dots, B_{mr}, C_r, B'_{1r}, \dots, B'_{mr} : r \geqslant
1 \}.
\]

\noindent Give $\scrm$ an $\nilhecke{<2}$-module structure by letting
every ${\bf s}_i$ kill all basis vectors, except for the actions
described by the first diagram in Figure~\ref{Fig2}.
Explicitly, the basis vectors on which the action is nonzero are:

\begin{figure}[ht]
\begin{tikzpicture}[line cap=round,line join=round,>=triangle 45,x=1.0cm,y=1.0cm]
\draw(5.8,7.7) circle (0.25cm);
\draw(7.5,8.8) circle (0.4cm);
\draw(9.5,8.8) circle (0.4cm);
\draw(11.5,8.8) circle (0.4cm);
\draw(13.5,8.8) circle (0.4cm);
\draw(13.5,6.6) circle (0.4cm);
\draw(11.5,6.6) circle (0.4cm);
\draw(9.5,6.6) circle (0.4cm);
\draw(7.5,6.6) circle (0.4cm);
\draw(14.9,7.7) circle (0.25cm);
\draw (5.5,8) node[anchor=north west] {$A$};
\draw (7.1,9.1) node[anchor=north west] {$B_1$};
\draw (9.1,9.1) node[anchor=north west] {$B_2$};
\draw (11,9.1) node[anchor=north west] {$B_{m'}$};
\draw (13.05,9.1) node[anchor=north west] {$B_m$};
\draw (13,6.9) node[anchor=north west] {$B'_m$};
\draw (11,6.9) node[anchor=north west] {$B'_{m'}$};
\draw (9.1,6.9) node[anchor=north west] {$B'_2$};
\draw (7.1,6.9) node[anchor=north west] {$B'_1$};
\draw (14.6,8) node[anchor=north west] {$C$};
\draw [->] (7,6.9) -- (6.1,7.5);
\draw [->] (6.1,7.9) -- (7,8.7);
\draw [->] (8,8.8) -- (9,8.8);
\draw [->] (12,8.8) -- (13,8.8);
\draw [->] (13,6.6) -- (12,6.6);
\draw [->] (9,6.6) -- (8,6.6);
\draw [->] (13.9,8.5) -- (14.6,7.9);
\draw [->] (14.6,7.5) -- (13.9,6.8);
\draw (6.2,7.2) node[anchor=north west] {$ \alpha $};
\draw (6.2,8.7) node[anchor=north west] {$ \alpha $};
\draw (8.1,9.4) node[anchor=north west] {$ \beta_1 $};
\draw (10.1,9) node[anchor=north west] {$ \cdots $};
\draw (12,9.4) node[anchor=north west] {$ \beta_{m'} $};
\draw (14.2,8.8) node[anchor=north west] {$ \gamma $};
\draw (14.2,7.2) node[anchor=north west] {$ \gamma $};
\draw (12.3,6.6) node[anchor=north west] {$ \beta_{m'} $};
\draw (10.1,6.8) node[anchor=north west] {$ \cdots $};
\draw (8.3,6.6) node[anchor=north west] {$ \beta_1 $};
\draw (6,8) node[anchor=north west] {+};
%
%
\draw(5.8,2.1) circle (0.25cm);
\draw(7.5,3.2) circle (0.4cm);
\draw(9.5,3.2) circle (0.4cm);
\draw(11.5,3.2) circle (0.4cm);
\draw(13.5,3.2) circle (0.4cm);
\draw(13.5,1) circle (0.4cm);
\draw(11.5,1) circle (0.4cm);
\draw(9.5,1) circle (0.4cm);
\draw(7.5,1) circle (0.4cm);
\draw (5.5,2.4) node[anchor=north west] {$A$};
\draw (7.1,3.5) node[anchor=north west] {$B_1$};
\draw (9.1,3.5) node[anchor=north west] {$B_2$};
\draw (11,3.5) node[anchor=north west] {$B_{m'}$};
\draw (13.05,3.5) node[anchor=north west] {$B_m$};
\draw (13,1.3) node[anchor=north west] {$B'_m$};
\draw (11,1.3) node[anchor=north west] {$B'_{m'}$};
\draw (9.1,1.3) node[anchor=north west] {$B'_2$};
\draw (7.1,1.3) node[anchor=north west] {$B'_1$};
\draw [->] (7,1.3) -- (6.1,1.9);
\draw [->] (6.1,2.3) -- (7,3.1);
\draw [->] (8,3.2) -- (9,3.2);
\draw [->] (12,3.2) -- (13,3.2);
\draw [->] (13.5,2.7) -- (13.5,1.5);
\draw [->] (13,1) -- (12,1);
\draw [->] (9,1) -- (8,1);
\draw (6.2,1.6) node[anchor=north west] {$ \alpha $};
\draw (6.2,3.1) node[anchor=north west] {$ \alpha $};
\draw (8.1,3.8) node[anchor=north west] {$ \beta_1 $};
\draw (10.1,3.4) node[anchor=north west] {$ \cdots $};
\draw (12,3.8) node[anchor=north west] {$ \beta_{m'} $};
\draw (13.6,2.4) node[anchor=north west] {$ \gamma $};
\draw (12.3,1) node[anchor=north west] {$ \beta_{m'} $};
\draw (10.1,1.2) node[anchor=north west] {$ \cdots $};
\draw (8.3,1) node[anchor=north west] {$ \beta_1 $};
\draw (6,2.4) node[anchor=north west] {+};
\end{tikzpicture}
\caption{The modules $\scrm$ for the nil-Hecke algebras in Steps 2 and 3}
\label{Fig2}
\end{figure}
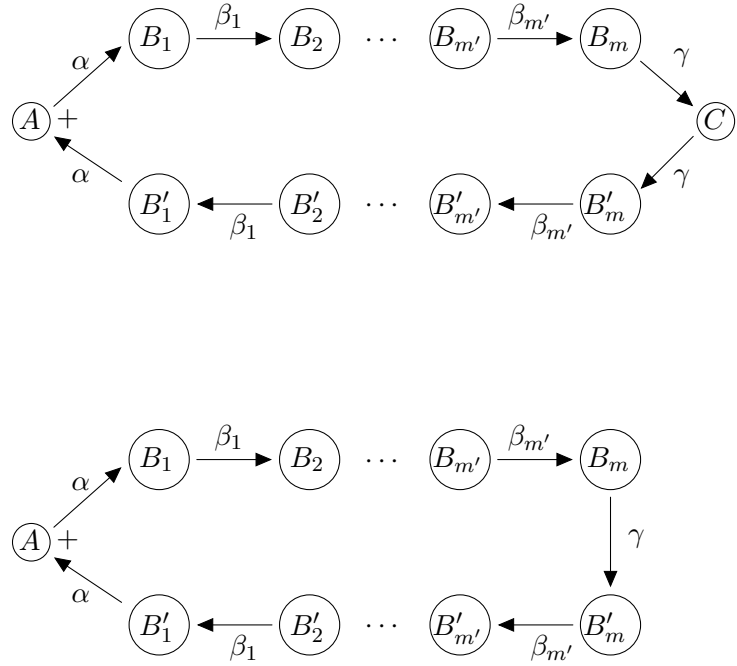

\begin{align*}
{\bf s}_\alpha(A_r) := &\ B_{1r}, \quad 
{\bf s}_{\beta_j}(B_{jr}) := B_{j+1,r}\ (1 \leqslant j \leqslant m-1),
\quad {\bf s}_\gamma(B_{mr}) := C_r,\\
{\bf s}_\gamma(C_r) := &\ B'_{mr}, \quad {\bf s}_{\beta_j}(B_{jr}) :=
B_{j-1,r}\ (2 \leqslant j \leqslant m),
\quad {\bf s}_\alpha(B'_{1r}) := A_{r+1}.
\end{align*}

\noindent Thus the ``+'' at the head of an arrow refers to the index $r$
increasing by $1$. One verifies that the defining relations for
$\nilhecke{<2}$ hold on each basis vector, hence in ${\rm
End}_\bk(\scrm)$. Thus $\scrm$ is a cyclic $\nilhecke{<2}$-module
generated by $A_{11}$. Now $\scrm$, hence $\nilhecke{<2}$, is not a
finitely generated $\bk$-module.\medskip

\noindent \textbf{Step 3:}
Suppose next that some $m_{ij} > 3$ and some $d_{i_0} > 2$. Without loss
of generality there exist nodes nodes $\beta_1, \dots, \beta_{m-1}$ for
some $m \geqslant 1$ (in the figure we write $m' := m-1$), such that
\[
\alpha = i_0 \quad \longleftrightarrow \quad \beta_1 \quad
\longleftrightarrow \quad \cdots \quad \longleftrightarrow \quad
\beta_{m-1} = i \quad \longleftrightarrow \quad \gamma = j
\]

\noindent Now construct a free $\bk$-module $\scrm$ as in Step 2, this
time giving it an $\nilhecke{<2}$-module structure using the second
diagram in Figure~\ref{Fig2}. One verifies that the action on $\scrm$
satisfies the defining relations. Hence the nil-Hecke algebra is once
again of infinite $\bk$-rank.\medskip

\noindent \textbf{Step 4:}
It remains to show the assertion when the Coxeter (multi)graph of $W$ is
a simply laced tree with node set $I$, and $d_{i_0} \geqslant 3$ for a
unique $i_0 \in I$. Denote the corresponding nil-Hecke algebra by
$n\mathcal{H}(W,i_0,d_{i_0}, J_{<2})$. For nodes $s,t \in I$, let $[s,t]$
denote the path from source $s \in I$ to target $t \in I$. If $s = i_1,
\dots, i_n = t$ enumerates sequentially the nodes in $[s,t]$ for some $n
\geqslant 1$, define:
\begin{equation}
{\bf s}_{[s,t]} := {\bf s}_{i_1} \cdots {\bf s}_{i_n}.
\end{equation}

We begin by enumerating a spanning set for $n\mathcal{H}(W,i_0,d_{i_0},
J_{<2})$ of the correct size. Notice that the product ${\bf s}_i {\bf
s}_j$ (with $i,j \in I$) vanishes unless $i=j=i_0$ or $i \neq j$ are
adjacent in $I$. It follows that the set
\[
\{ 1 \} \sqcup \{ {\bf s}_{[s,t]} : s,t \in I \} \sqcup
\{ {\bf s}_{[s,i_0]} {\bf s}_{i_0}^{k-2} {\bf s}_{[i_0,t]} : s,t \in I, \
2 \leqslant k \leqslant d_{i_0} - 1 \}
\]

\noindent spans $n\mathcal{H}(W, i_0, d_{i_0}, J_{<2})$. Now by the proof
of Theorem~\ref{Tdimplus}, this is indeed a basis, and we are done.\qed


\section{Proof of Theorem \ref{T3}:
finite rank nil-Hecke algebras with $k=3$}

\begin{proof}[Proof of Theorem \ref{T3}]
We begin by showing that the only ``adjacency graphs'', corresponding to
Coxeter groups $W$ that admit finite $\bk$-rank nil-Hecke algebras for
$NTL_W({\bf d}) = \nilhecke{<3}$ (these algebras are defined following
Definition~\ref{D11}), are Dynkin diagrams of finite type or type $E_n, n
\geqslant 9$. Here, first, by the \textit{adjacency multigraph} of a
Coxeter group associated to a Coxeter matrix $(m_{ij})_{i,j=1}^n$ with $n
= |I|$, we mean the multigraph on $n$ nodes with $m_{ij} - 2$ edges
between nodes $i \neq j \in \{ 1, \dots, n \}$. Next, the corresponding
\textit{adjacency graph} is the simple graph on $n$ nodes with ${\bf
1}(m_{ij} > 2)$ edges between nodes $i \neq j$.

If ${\bf d}=(2,\ldots, 2)$, then $W$ is a Coxeter group, and by Theorem
\ref{Tdim}, the problem reduces to finding all $W$ with finitely many
fully commutative elements. This was worked out in \cite{Fan0,Gr,St1}. In
what follows, we therefore assume there exists at least one $i$ such that
$d_i\geqslant 3$. Let this $i$ correspond to vertex $\alpha$.\medskip

\noindent \textbf{Step 1.}
Suppose the adjacency multigraph of $W$ contains a cycle on nodes
$\beta_1, \dots, \beta_m$ with $m \geqslant 3$, i.e., $m_{\beta_i,
\beta_{i+1}} > 2$ for $0<i<m$ and $m_{\beta_m, \beta_1} > 2$. Now
construct a free $\bk$-module $\scrm$ with basis given by the countable
set $\{ A_{1r}, \dots, A_{mr} : r \geqslant 1 \}$, and define the
following $NTL_W({\bf d})$-action on it: ${\bf s}_{\beta_i}$ kills
$A_{jr}$ except that ${\bf s}_{\beta_i}(A_{ir}) := A_{i+1,r}$ for $0 < i
< r$ and ${\bf s}_{\beta_m}(A_{mr}) := A_{1,r+1}$. It is easy to verify
that the defining relations of $NTL_W({\bf d})$ hold in ${\rm
End}_{\bk}(\scrm)$, as they hold on each $A_{ir}$. Therefore $\scrm$ is a
cyclic $NTL_W({\bf d})$-module generated by $A_{11}$, which is not
finitely generated as a $\bk$-module. Hence neither is $NTL_W({\bf d})$,
as claimed.

The analysis in this step can be conveniently expressed by a diagram of a
cycle. We will do so for other cases in the remainder of the
proof.\medskip

\noindent \textbf{Step 2.}
The remaining adjacency multigraphs are those whose underlying simple
graphs are trees. We next claim there is no vertex adjacent to $4$ nodes,
and at most one vertex adjacent to $3$ nodes.
Indeed, if $\beta_1$ is adjacent to nodes $\alpha_1, \alpha_2, \gamma_1,
\gamma_2$, then we appeal to Figure \ref{Fig6} with $m=2$, to generate a
$NTL_W({\bf d})$-module $\scrm$ with basis $\{ A_r, B_r, B_{1r}, B_{2r},
B'_{1r}, B'_{2r}, C_r, D_r : r \geqslant 1 \}$. The module relations are
read off of the diagram. Namely, ${\bf s}_i$ kills all basis vectors for
all $i \in I$, with the following exceptions:
\begin{align*}
& {\bf s}_{\beta_1}(B_{1r}) = B_{2r}, \quad
{\bf s}_{\gamma_1}(B_{2r}) = D_r, \quad
{\bf s}_{\gamma_2}(B_{2r}) = C_r, \quad
{\bf s}_{\gamma_1}(C_r) = B'_{2r}, \quad
{\bf s}_{\gamma_2}(D_r) = B'_{2r},\\
& {\bf s}_{\beta_1}(B'_{2r}) = B'_{1r}, \quad
{\bf s}_{\alpha_2}(B'_{1r}) = B_r, \quad
{\bf s}_{\alpha_1}(B_r) = B_{1r}, \quad
{\bf s}_{\alpha_2}(A_r) = B_{1r}, \quad
{\bf s}_{\alpha_1}(B'_{1r}) = A_{r+1}.
\end{align*}

\begin{figure}[ht]
\begin{tikzpicture}[line cap=round,line join=round,>=triangle 45,x=1.0cm,y=1.0cm]
\draw(5.8,7.7) circle (0.25cm);
\draw(9,7.7) circle (0.25cm);
\draw(7.5,8.8) circle (0.4cm);
\draw(9.5,8.8) circle (0.4cm);
\draw(11.5,8.8) circle (0.4cm);
\draw(13.5,8.8) circle (0.4cm);
\draw(13.5,6.6) circle (0.4cm);
\draw(11.5,6.6) circle (0.4cm);
\draw(9.5,6.6) circle (0.4cm);
\draw(7.5,6.6) circle (0.4cm);
\draw(12.1,7.7) circle (0.25cm);
\draw(14.9,7.7) circle (0.25cm);
\draw (5.5,8) node[anchor=north west] {$A$};
\draw (8.7,8) node[anchor=north west] {$B$};
\draw (7.1,9.1) node[anchor=north west] {$B_1$};
\draw (9.1,9.1) node[anchor=north west] {$B_2$};
\draw (11,9.1) node[anchor=north west] {$B_{m'}$};
\draw (13.05,9.1) node[anchor=north west] {$B_m$};
\draw (13,6.9) node[anchor=north west] {$B'_m$};
\draw (11,6.9) node[anchor=north west] {$B'_{m'}$};
\draw (9.1,6.9) node[anchor=north west] {$B'_2$};
\draw (7.1,6.9) node[anchor=north west] {$B'_1$};
\draw (11.8,8) node[anchor=north west] {$D$};
\draw (14.6,8) node[anchor=north west] {$C$};
\draw [->] (7.1,6.8) -- (6.1,7.5);
\draw [->] (6.1,7.9) -- (7,8.7);
\draw [->] (7.9,6.8) -- (8.8,7.5);
\draw [->] (8.8,8) -- (8,8.7);
\draw [->] (8,8.8) -- (9,8.8);
\draw [->] (12,8.8) -- (13,8.8);
\draw [->] (13,6.6) -- (12,6.6);
\draw [->] (9,6.6) -- (8,6.6);
\draw [->] (13.1,8.6) -- (12.3,7.9);
\draw [->] (12.3,7.5) -- (13.1,6.9);
\draw [->] (13.9,8.5) -- (14.6,7.9);
\draw [->] (14.6,7.5) -- (13.9,6.8);
\draw (6.2,7.2) node[anchor=north west] {$ \alpha_1 $};
\draw (6,8.7) node[anchor=north west] {$ \alpha_2 $};
\draw (7.8,7.6) node[anchor=north west] {$ \alpha_2 $};
\draw (7.8,8.5) node[anchor=north west] {$ \alpha_1 $};
\draw (8.1,9.4) node[anchor=north west] {$ \beta_1 $};
\draw (10.1,9) node[anchor=north west] {$ \cdots $};
\draw (12,9.4) node[anchor=north west] {$ \beta_{m'} $};
\draw (12.7,8.4) node[anchor=north west] {$ \gamma_1 $};
\draw (12.7,7.6) node[anchor=north west] {$ \gamma_2 $};
\draw (14.2,8.8) node[anchor=north west] {$ \gamma_2 $};
\draw (14.2,7.2) node[anchor=north west] {$ \gamma_1 $};
\draw (12.3,6.6) node[anchor=north west] {$ \beta_{m'} $};
\draw (10.1,6.8) node[anchor=north west] {$ \cdots $};
\draw (8.3,6.6) node[anchor=north west] {$ \beta_1 $};
\draw (6,8) node[anchor=north west] {+};
\draw (8.2,8) node[anchor=north west] {+};
\end{tikzpicture}
\caption{Diagrammatic calculus for the algebras $NTL(M)$; here $m' =
m-1$.}
\label{Fig6}
\end{figure}
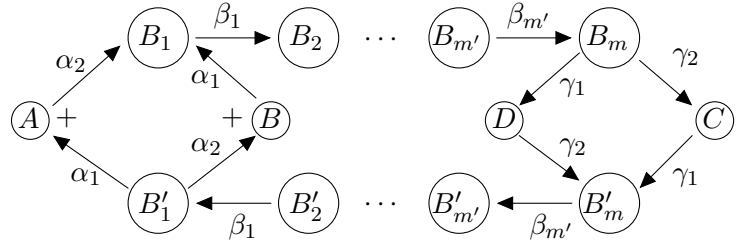

\noindent (The ``+'' at the head of an arrow again refers to the index
increasing by $1$.) It is easy to verify that the defining relations of
$NTL_W({\bf d})$ hold at each node in the diagram (i.e., at the
corresponding basis vector) and hence in ${\rm End}_\bk(\scrm)$. This
yields a cyclic $NTL_W({\bf d})$-module $\scrm$ of infinite $\bk$-rank,
as desired.

Next if $\deg \alpha = \deg \gamma = 3$, let
\[
\alpha = \beta_1 \quad \longleftrightarrow \quad \beta_2 \quad
\longleftrightarrow \quad \cdots \quad \longleftrightarrow \quad
\beta_{m-2} \quad \longleftrightarrow \quad \gamma = \beta_{m-1}
\]

\noindent be the path between them, for suitable $m$. Also suppose
$\alpha$ is adjacent to $\beta_2, \alpha_1, \alpha_2$, and $\gamma$ is
adjacent to $\beta_{m-2}, \gamma_1, \gamma_2$. We again appeal to Figure
\ref{Fig6} to generate a representation $\scrm$ which has infinite
$\bk$-rank.

\begin{remark}
A brief remark for the reader, about checking that the relations hold in
Figure \ref{Fig6} and the other figures in this paper:
(a)~On each commuting sub-diagram, one only needs to check the braid
relations in $J_{<k}$, and account for the `$+$' signs.
(b)~On each directed cycle in the diagrams, one needs to check that no
braid word in $J \setminus J_{<k}$ occurs, nor any word ${\bf
s}_i^{d_i}$.
\end{remark}

\noindent \textbf{Step 3.}
Thus the Dynkin multigraph is in fact acyclic, with at most one vertex of
degree $3$ and no vertex of degree $\geqslant 4$. First, we use the first
diagram in Figure~\ref{Fig2} to show there exists a unique node $i$ such
that $d_i \geqslant 3$. Second, use Figure~\ref{FigS3} with $m=1$ to show
this node has degree $1$ (i.e., is pendant).

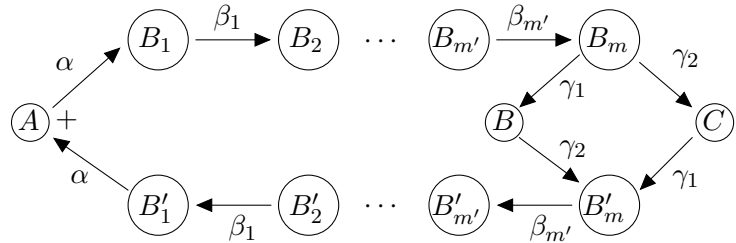
\begin{figure}[ht]
\begin{tikzpicture}[line cap=round,line join=round,>=triangle 45,x=1.0cm,y=1.0cm]
\draw(5.8,7.7) circle (0.25cm);
\draw(7.5,8.8) circle (0.4cm);
\draw(9.5,8.8) circle (0.4cm);
\draw(11.5,8.8) circle (0.4cm);
\draw(13.5,8.8) circle (0.4cm);
\draw(13.5,6.6) circle (0.4cm);
\draw(11.5,6.6) circle (0.4cm);
\draw(9.5,6.6) circle (0.4cm);
\draw(7.5,6.6) circle (0.4cm);
\draw(12.1,7.7) circle (0.25cm);
\draw(14.9,7.7) circle (0.25cm);
\draw (5.5,8) node[anchor=north west] {$A$};
\draw (7.1,9.1) node[anchor=north west] {$B_1$};
\draw (9.1,9.1) node[anchor=north west] {$B_2$};
\draw (11,9.1) node[anchor=north west] {$B_{m'}$};
\draw (13.05,9.1) node[anchor=north west] {$B_m$};
\draw (13,6.9) node[anchor=north west] {$B'_m$};
\draw (11,6.9) node[anchor=north west] {$B'_{m'}$};
\draw (9.1,6.9) node[anchor=north west] {$B'_2$};
\draw (7.1,6.9) node[anchor=north west] {$B'_1$};
\draw (11.8,8) node[anchor=north west] {$B$};
\draw (14.6,8) node[anchor=north west] {$C$};
\draw [->] (7.1,6.8) -- (6.1,7.5);
\draw [->] (6.1,7.9) -- (7,8.7);
\draw [->] (8,8.8) -- (9,8.8);
\draw [->] (12,8.8) -- (13,8.8);
\draw [->] (13,6.6) -- (12,6.6);
\draw [->] (9,6.6) -- (8,6.6);
\draw [->] (13.1,8.6) -- (12.3,7.9);
\draw [->] (12.3,7.5) -- (13.1,6.9);
\draw [->] (13.9,8.5) -- (14.6,7.9);
\draw [->] (14.6,7.5) -- (13.9,6.8);
\draw (6.2,7.2) node[anchor=north west] {$ \alpha $};
\draw (6,8.7) node[anchor=north west] {$ \alpha $};
%
%
\draw (8.1,9.4) node[anchor=north west] {$ \beta_1 $};
\draw (10.1,9) node[anchor=north west] {$ \cdots $};
\draw (12,9.4) node[anchor=north west] {$ \beta_{m'} $};
\draw (12.7,8.4) node[anchor=north west] {$ \gamma_1 $};
\draw (12.7,7.6) node[anchor=north west] {$ \gamma_2 $};
\draw (14.2,8.8) node[anchor=north west] {$ \gamma_2 $};
\draw (14.2,7.2) node[anchor=north west] {$ \gamma_1 $};
\draw (12.3,6.6) node[anchor=north west] {$ \beta_{m'} $};
\draw (10.1,6.8) node[anchor=north west] {$ \cdots $};
\draw (8.3,6.6) node[anchor=north west] {$ \beta_1 $};
\draw (6,8) node[anchor=north west] {+};
\end{tikzpicture}
\caption{Diagrammatic calculus for Step 3; here $m' = m-1$.}
\label{FigS3}
\end{figure}

Third, use the second diagram in Figure~\ref{Fig2} to show that the
Dynkin multigraph of $W$ is in fact simply laced. Hence from above, it is
a tree graph.

Next, suppose there is a vertex $\beta$ of degree $3$. Let
\[
\alpha = \beta_1 \quad \longleftrightarrow \quad \beta_2 \quad
\longleftrightarrow \quad \cdots \quad \longleftrightarrow \quad
\beta_{m-2} \quad \longleftrightarrow \quad \beta = \beta_{m-1}
\]
be a path from $\alpha$ to $\beta$; further, let $\gamma_1$ and
$\gamma_2$ be two more vertices adjacent to $\beta$. Then the module
pictured in Figure \ref{FigS3} again has infinite $\bk$-rank, a
contradiction. This shows that the adjacency multigraph is necessarily a
path graph. As we have seen, ${\bf d} = (d,2,\dots,2)$ for some $d
\geqslant 3$, and so we obtain a quotient of the algebra $NC_A(n,d)$ as
desired.\medskip

\noindent\textbf{Step 4.}
To finish the proof, we need to determine the $\bk$-rank of $NTL_W({\bf
d})$, where $W$ is of type $A_n$ and ${\bf d} = (d,2,\dots,2)$. By
Theorem~\ref{Tdimplus}, it suffices to enumerate all the monomials in
$x_1,\ldots, x_n$ that are not equivalent (via commutation relations
$x_ix_j=x_jx_i$ for $|i-j|>1$) to any monomial containing the strings
$x_1^d$, $x_i^2$ for $i>1$, or containing any of the length $3$ braid
words. We extend the argument in \cite{GK} to accomplish this.

Consider a nonzero monomial in $NTL_W({\bf d})$, and in its equivalence
class under the commutation relations ($x_i x_j = x_j x_i$ for
$|i-j|>1$), look at the lexicographically smallest monomial ${\bf w}$ as
in \cite{GK}. We claim that all the $x_1$'s occur in a contiguous string
in ${\bf w}$. Indeed, suppose not. Then by the arguments in \cite{GK},
the monomial ${\bf w}$ can be assumed to be composed of decreasing runs,
where in each run the indices go down by one, except possibly where $x_1$
is followed by $x_1$. Since $x_1$'s can only occur at the ends of such
runs, if there are two $x_1$'s that are not consecutive, a closest such
pair must be part of a substring that looks like $x_1x_jx_{j-1}\cdots
x_2x_1$. But this can be reduced to $x_jx_{j-1}\cdots x_1x_2x_1$ and thus
zero, proving our claim.

One can repeat the arguments in \cite[Section 3]{GK} and verify
that Lemmas 1, 3, and 4 there still hold, with the following exceptions:
\begin{enumerate}
\item There might be $x_1$ followed by $x_1$ in the monomial ${\bf w}$,
contrary to Lemma 1; or
\item The indices of the peaks might not be strictly increasing,
precisely when the first run starts with $x_i$ for $i\ge 2$, ends with
$x_1^j$ for $j\ge 2$, and is followed by $x_2$, contrary to Lemma 2.
\end{enumerate}

Let us set aside the monomials that come under exception (2) for the
moment. Then the monomials we need to count are precisely those counted
by \cite{GK}, with the added caveat that there might be a contiguous
block of up to $d-1$ $x_1$'s. Now if the monomial contains no $x_1$, then
this is simply a monomial in $NTL_W((2,\ldots, 2))$ on the generators
$x_2,\ldots, x_n$; by the results of \cite{GK}, there are $C_n$ such
monomials. If the monomial does contain a nonempty block of $x_1$'s, then
deleting all but one $x_1$ gives a monomial containing $x_1$ in
$NTL_W((2,\ldots, 2))$; conversely, given a monomial in $NTL_W((2,\ldots,
2))$ containing $x_1$, one can form $d-1$ different valid monomials from
there by adding $x_1$'s. The number of $NTL_W((2,\ldots, 2))$-monomials
containing an $x_1$ is simply $C_{n+1}-C_n$ (since there are $C_n$
monomials not having $x_1$), so the total number is
\[
C_n+(d-1)(C_{n+1}-C_n)=(d-1)C_{n+1}-(d-2)C_n.
\]

Finally, we count the monomials that exhibit exception (2). Such a
monomial must start with $x_ix_{i-1}\cdots x_2x_1^jx_2$, and then either
the next term is $x_3$, or the remaining string is a non-zero monomial on
the generators $x_{i+1},\dots, x_n$. Indeed, the peak of the next run
cannot be $x_k$ for $4\le k\le i$, else we can move it to the left by the
commutation relations and the entire monomial reduces to zero. The next
run cannot even contain such a $x_k$, else it would have to contain $x_i$
and thus the initial segment of this run till $x_i$ can be commuted
leftward to reduce the monomial to zero, proving our claim. Continuing
this argument, we see that the $x_1^j$ in our monomial is followed by a
string of the form $x_2x_3\cdots x_\ell$, with $2\le \ell \le i$, and a
nonzero (possibly empty) monomial in the generators $x_{i+1},\dots, x_n$.
Thus the number of such monomials is $(i-1)C_{n-i+1}$. Summing over all
possible $i$ and $j$, the total number becomes
\[
(d-2)\sum_{i=2}^{n}(i-1)C_{n-i+1}=(d-2)\sum_{j=1}^{n-1}jC_{n-j}.
\]
As in \cite{GK}, we can verify these indeed correspond to non-zero
monomials, so adding this to the previous count completes the proof.
\end{proof}

\section{Proof of Theorem \ref{T4} and its corollaries: $k \geq 4$}

We now consider the remaining cases $4 \leqslant k \leqslant \infty$,
starting with $k=4$.

\begin{proof}[Proof of Theorem \ref{T4}]
Since $\nilhecke{<\infty}$ surjects onto $\nilhecke{<4}$, it follows that
$\nilhecke{<4}$ has finite $\bk$-rank if $\nilhecke{<\infty}$ does; these
cases are listed in Theorem \ref{Tinfty}. On the other hand,
$\nilhecke{<4}\twoheadrightarrow\nilhecke{<3}$, so $\nilhecke{<4}$ has
infinite $\bk$-rank whenever $\nilhecke{<3}$ does. The cases where this
is not so are listed in Theorem \ref{T3}. Comparing these two lists, we
see that only the cases where $W$ is of type $E_n (n\geqslant 9)$, $F_n
(n\geqslant 5)$ or $H_n(n\geqslant 5)$ with ${\bf d}=(2,\ldots, 2)$
remain to be checked for finiteness.

If $W$ has type $E_n$, there are no braid relations of length $4$ or
more, and so $\nilhecke{<4} \cong \nilhecke{<\infty}$, which is known by
\cite{Kh2} to have infinite $\bk$-rank for $n\geqslant 9$. Next, assume
$W$ is of type $F_n$ with $n \geqslant 5$. We will prove that
$\nilhecke{<4}$ has infinite $\bk$-rank for $n=5$, from which the $n>5$
cases would follow trivially.

Let $W$ be of type $F_5$, and label the generators as in Figure \ref{F5}.
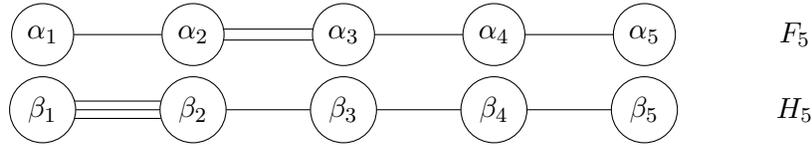
\begin{figure}[ht]
\begin{tikzpicture}[line cap=round,line join=round,>=triangle 45,x=2.0cm,y=1.0cm]
\tikzstyle{every circle node}=[draw]
\draw(1,0) node[circle](1){$\alpha_1$};
\draw(2,0) node[circle](2){$\alpha_2$};
\draw(3,0) node[circle](3){$\alpha_3$};
\draw(4,0) node[circle](4){$\alpha_4$};
\draw(5,0) node[circle](5){$\alpha_5$};
\draw(6,0) node[] {$F_5$};
\draw
(2.-10) edge[auto=right,-]  (3.190)  
(3.170) edge[auto=right,-]  (2.10);  
\draw[-] (1)--(2) (3)--(4) (4)--(5);

\draw(1,-1) node[circle](H1){$\beta_1$};
\draw(2,-1) node[circle](H2){$\beta_2$};
\draw(3,-1) node[circle](H3){$\beta_3$};
\draw(4,-1) node[circle](H4){$\beta_4$};
\draw(5,-1) node[circle](H5){$\beta_5$};
\draw(6,-1) node[] {$H_5$};
\draw
(H1.-15) edge[auto=right,-]  (H2.195)  
(H2.165) edge[auto=right,-]  (H1.15);  
\draw[-] (H1)--(H2) (H2)--(H3) (H3)--(H4) (H4)--(H5);
%

%

\end{tikzpicture}
\caption{Dynkin graphs for $F_5$ and $H_5$}\label{F5}
\end{figure}

Then the infinite rank module defined by the diagram in Figure
\ref{F5module} proves our claim.

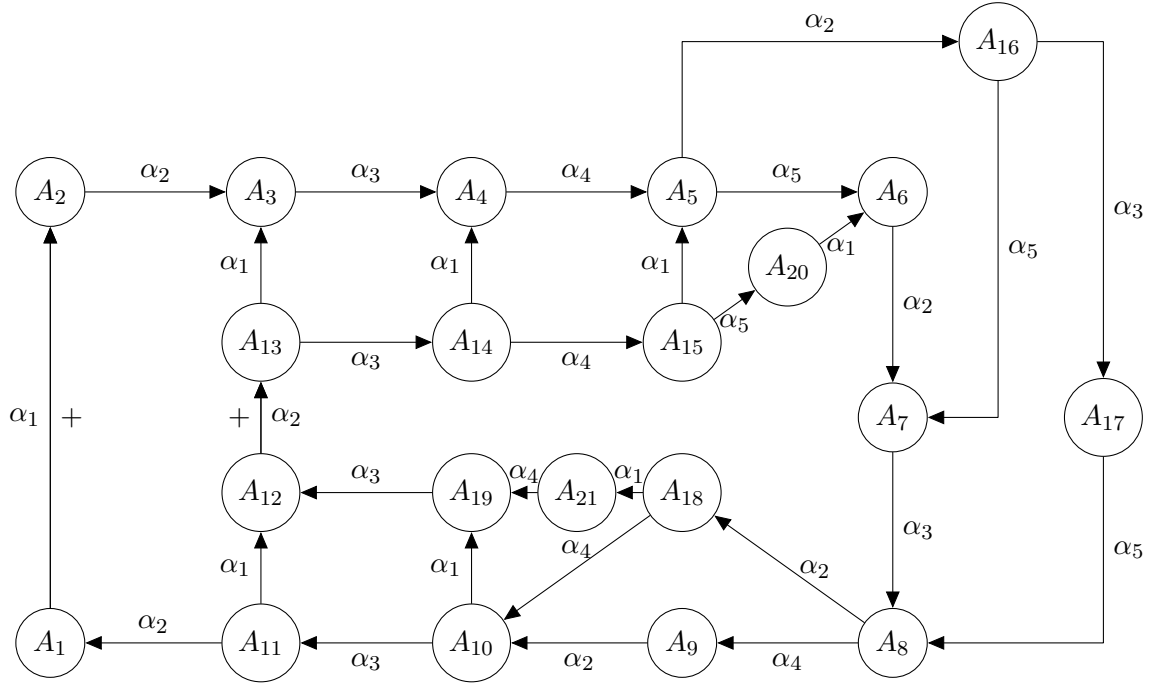
\begin{figure}[ht]
\begin{tikzpicture}[line cap=round,line join=round,>=triangle 45,x=1.4cm,y=1.0cm]
\tikzstyle{every circle node}=[draw]
\modulenode{0}{0}{1}
\modulenode{2}{0}{11}
\modulenode{0}{6}{2}
\modulenode{2}{6}{3}
\modulenode{4}{6}{4}
\modulenode{6}{6}{5}
\modulenode{8}{6}{6}
\modulenode{8}{3}{7}
\modulenode{8}{0}{8}
\modulenode{6}{0}{9}
\modulenode{4}{0}{10}
\modulenode{9}{8}{16}
\modulenode{2}{2}{12}
\modulenode{2}{4}{13}
\modulenode{4}{4}{14}
\modulenode{6}{4}{15}
\modulenode{10}{3}{17}
\modulenode{6}{2}{18}
\modulenode{4}{2}{19}
\modulenode{7}{5}{20}
\modulenode{5}{2}{21}
\moduleedge{1}{2}{$\alpha_1$}{left}
\moduleedge{1}{2}{$+$}{right}
\moduleedge{2}{3}{$\alpha_2$}{above}
\moduleedge{3}{4}{$\alpha_3$}{above}
\moduleedge{4}{5}{$\alpha_4$}{above}
\moduleedge{5}{6}{$\alpha_5$}{above}
\draw [->] (5)--(6,8)--(16) node[midway, above]{$\alpha_2$};
\moduleedge{6}{7}{$\alpha_2$}{right}
\moduleedge{7}{8}{$\alpha_3$}{right}
\moduleedge{8}{9}{$\alpha_4$}{below}
\moduleedge{8}{18}{$\alpha_2$}{right}
\moduleedge{9}{10}{$\alpha_2$}{below}
\moduleedge{10}{11}{$\alpha_3$}{below}
\moduleedge{10}{19}{$\alpha_1$}{left}
\draw[->] (11)--(1) node[midway, above]{$\alpha_2$}; 
\draw[->] (11)--(12) node[midway, left]{$\alpha_1$}; 
\moduleedge{12}{13}{$\alpha_2$}{right}
\moduleedge{12}{13}{$+$}{left}
\moduleedge{13}{3}{$\alpha_1$}{left}
\moduleedge{13}{14}{$\alpha_3$}{below}
\moduleedge{14}{4}{$\alpha_1$}{left}
\moduleedge{14}{15}{$\alpha_4$}{below}
\moduleedge{15}{5}{$\alpha_1$}{left}
\moduleedge{15}{20}{$\alpha_5$}{below}
\draw [->] (16) --(9,3) node[midway,right]{$\alpha_5$} -- (7) ;
\draw [->] (16) -- (10,8) --(17) node[midway,right]{$\alpha_3$};
\draw[->] (17)--(10,0) node[midway,right]{$\alpha_5$}--(8);
\moduleedge{18}{21}{$\alpha_1$}{above}
\moduleedge{18}{10}{$\alpha_4$}{above}
\draw [->] (19) -- (12) node[midway, above]{$\alpha_3$}; 
\moduleedge{20}{6}{$\alpha_1$}{below}
\moduleedge{21}{19}{$\alpha_4$}{above}
\end{tikzpicture}
\caption{An infinite rank module for $F_5$}\label{F5module}
\end{figure}

For the remaining case, it suffices to show a similar module for $H_5$.
This is demonstrated in Figure \ref{H5module}.

\begin{figure}[ht]
\begin{tikzpicture}[line cap=round,line join=round,>=triangle 45,x=1.2cm,y=1.2cm]
\tikzstyle{every circle node}=[draw]
\modulenode{4}{8}{1}
\modulenode{8}{4}{2}
\modulenode{7}{3}{3}
\modulenode{6}{4}{4}
\modulenode{5}{3}{5}
\modulenode{4}{4}{6}
\modulenode{3}{3}{7}
\modulenode{2}{2}{8}
\modulenode{1}{3}{9}
\modulenode{0}{4}{10}
\modulenode{6}{2}{11}
\modulenode{5}{1}{12}
\modulenode{4}{2}{13}
\modulenode{4}{0}{14}
\modulenode{3}{1}{15}
\modulenode{2}{4}{16}
\draw (3.77,7.5) node[anchor=north west] {+};
\moduleedge{1}{2}{$\beta_1$}{above right}
\moduleedge{2}{3}{$\beta_2$}{below right}
\moduleedge{3}{4}{$\beta_1$}{above right}
\moduleedge{3}{11}{$\beta_3$}{below right}
\moduleedge{4}{5}{$\beta_3$}{above left}
\moduleedge{5}{6}{$\beta_2$}{below left}
\moduleedge{5}{13}{$\beta_4$}{below right}
\moduleedge{6}{7}{$\beta_4$}{above left}
\moduleedge{7}{8}{$\beta_5$}{below right}
\moduleedge{7}{16}{$\beta_3$}{below left}
\moduleedge{8}{9}{$\beta_3$}{below left}
\moduleedge{9}{10}{$\beta_4$}{below left}
\draw [->] (10)--(1) node[midway, above left]{$\beta_2$}; 
\moduleedge{11}{5}{$\beta_1$}{above right}
\moduleedge{11}{12}{$\beta_4$}{below right}
\moduleedge{12}{13}{$\beta_1$}{below left}
\moduleedge{12}{14}{$\beta_5$}{below right}
\moduleedge{13}{7}{$\beta_2$}{above right}
\moduleedge{13}{15}{$\beta_5$}{above left}
\moduleedge{14}{15}{$\beta_1$}{below left}
\moduleedge{15}{8}{$\beta_2$}{below left}
\moduleedge{16}{9}{$\beta_5$}{above left}
\end{tikzpicture}
\caption{An infinite rank module for $H_5$}\label{H5module}\end{figure}

Next, we compute the ranks of the finite rank nil-Hecke algebras.
First suppose $W$ is of type $A$ and ${\bf d} = (d,2,\dots,2)$ or
$(2,\dots,2,d)$ for some $d>2$. As all braid relations have length $3$,
$\nilhecke{<4} = \nilhecke{<\infty}$, and so we are done by
Theorem~\ref{Tinfty}.

Otherwise, we fix ${\bf d} = (2,\dots,2)$ henceforth. Now if $W$ is
simply laced (of type $ADE$) or of type $I_2(m)$ with $m<4$, then there
are no braid relations of length $4$ or more, so $\nilhecke{<k} \cong
\nilhecke{<\infty} = NC_W((2,\dots,2))$. This has $\bk$-rank precisely
$|W|$.
If instead $W$ is of type $I_2(m)$, $m\geqslant k=4$, then both sides of
the braid relation (i.e., the longest word in $NC_W((2,2))$) get killed
in $\nilhecke{<4}$, while the remaining $2m-1$ words are not killed in
the quotient. The result now follows by Theorem~\ref{Tdim}.

We tackle the case of $B_n$ next. Let the generators of the corresponding
Weyl group $W(B_n)$ be $s_0, s_1,\ldots, s_{n-1}$, where the labeling is
chosen such that $m_{01}=4$, and $m_{i,i+1}=3$ for all $i\in\{1,\ldots,
n-2\}$. By Theorem \ref{Tdim}, we need to count the set $B_{n,4}$ of
elements $w \in W(B_n)$, for which no reduced expression contains the
substring $s_0s_1s_0s_1$ or $s_1s_0s_1s_0$.
For this, we use the following well-known combinatorial description of
$W(B_n)$ (see, for example, \cite{BB}):

\begin{theorem}\label{Bn}
Let $W$ be the group of all permutations $w$ on $S=\{ \pm 1, \dots, \pm n
\}$ such that $w(-a)=-w(a)\ \forall a \in S$. Then $W = W(B_n)$ is the
Coxeter group of type $B_n$, with generators
\[
s_i=[1,\ldots,i-1,i+1,i,i+2,\ldots,n], \ 1 \leqslant i \leqslant n-1;
\qquad s_0=[-1,2,\ldots,n],
\]
where we write $w=[a_1,a_2,\ldots,a_n] \in W(B_n)$ if $w(i)=a_i\ \forall
1 \leqslant i \leqslant n$.

Further, given any $w\in W$, the length function $\ell(\cdot)$ satisfies
the following properties: 
\begin{enumerate}
\item $\ell(ws_0)=\ell(w)+1$ if and only if $w(1)<0$; and
\item $\ell(ws_i)=\ell(w)+1$ if and only if $w(i)<w(i+1)$ for
$i\in\{1,\ldots, n-1\}$.
\end{enumerate}
\end{theorem}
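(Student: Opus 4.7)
My plan is to prove this classical result in two stages.

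\textbf{Stage 1: Identify $W$ with the Coxeter group of type $B_n$.} The group $W$ described in the statement is the hyperoctahedral group $(\mathbb{Z}/2)^n \rtimes S_n$, of order $2^n n!$. I would verify directly, in window notation, that the given generators $s_0, s_1, \ldots, s_{n-1}$ satisfy the Coxeter relations for $B_n$: namely $s_i^2 = e$ for all $i$, $(s_0 s_1)^4 = e$, $(s_i s_{i+1})^3 = e$ for $1 \leqslant i \leqslant n-2$, and $(s_i s_j)^2 = e$ for $|i-j| > 1$. Each is a short computation in one-line notation. Since $s_0, \ldots, s_{n-1}$ clearly generate $W$ (the $s_i$ with $i \geqslant 1$ generate the underlying $S_n$, and conjugating $s_0$ by permutations produces all single sign-flips), this yields a surjection $W(B_n) \twoheadrightarrow W$, which must be an isomorphism by comparing cardinalities $|W(B_n)| = 2^n n! = |W|$.

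\textbf{Stage 2: Establish the length function criteria.} My preferred approach is geometric: realize $W(B_n)$ as a reflection group on $\mathbb{R}^n$ with simple roots $\alpha_0 = e_1$ and $\alpha_i = e_{i+1} - e_i$ for $1 \leqslant i \leqslant n-1$, and positive roots $\{e_i : 1 \leqslant i \leqslant n\} \cup \{e_j \pm e_i : 1 \leqslant i < j \leqslant n\}$. The standard Coxeter-theoretic principle
\[
\ell(ws_i) > \ell(w) \ \iff\ w(\alpha_i) \in \Phi^+
\]
then reduces the proof to a direct calculation: $w(\alpha_0) = w(e_1) = \operatorname{sgn}(w(1)) \cdot e_{|w(1)|}$ and $w(\alpha_i) = \operatorname{sgn}(w(i{+}1)) e_{|w(i+1)|} - \operatorname{sgn}(w(i)) e_{|w(i)|}$, after which one reads off when these land in $\Phi^+$ and matches with the claimed criteria. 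Alternatively, one can proceed purely combinatorially using the explicit length formula
\[
\ell(w) = \operatorname{inv}(w) + \operatorname{nsp}(w) + \sum_{i:w(i)<0} |w(i)|,
\]
where $\operatorname{inv}$ counts ordinary inversions and $\operatorname{nsp}$ counts pairs $i<j$ with $w(i)+w(j)<0$; then the descent criteria follow by directly computing $\ell(ws_i) - \ell(w)$ in each case.

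The main bookkeeping obstacle will be keeping sign and composition conventions consistent -- in particular, the descent condition for $s_0$ depends sensitively on whether $ws_0$ means ``first $w$, then $s_0$'' or the reverse, and this is what determines whether the criterion reads $w(1)>0$ or $w(1)<0$. Once the convention is fixed at the start (matching the one used implicitly in the length formula above), both criteria drop out immediately from the positivity calculations. Since the result is entirely standard, one could alternatively simply cite the analogous treatment in \cite{BB} and omit details.
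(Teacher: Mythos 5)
The paper does not actually prove this statement: it is invoked as a ``well-known combinatorial description'' with a pointer to \cite{BB}, so there is no internal proof to compare against. Your outline is precisely the standard textbook argument (verify the Coxeter relations in window notation, compare cardinalities, then read off the descent criteria from $\ell(ws_\alpha)>\ell(w)\iff w(\alpha)\in\Phi^+$), and in substance it is sound; the only structural caveat is that the cardinality step presupposes $|W(B_n)|=2^n n!$ is known independently of the signed-permutation model, which is standard but should be acknowledged or replaced by a faithfulness argument for the reflection representation.

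Two concrete corrections, though. First, your own computation resolves the convention issue you flag --- and it exposes a sign error in the statement as printed. With $\alpha_0=e_1$ and $w(e_1)=\operatorname{sgn}(w(1))\,e_{|w(1)|}$, one gets $\ell(ws_0)=\ell(w)+1$ if and only if $w(1)>0$, not $w(1)<0$: test $w=e$, where $\ell(e s_0)=1=\ell(e)+1$ while $e(1)=1>0$. This is the form in \cite{BB} (Prop.~8.1.2, where $s_0\in D_R(w)$ iff $0=w(0)>w(1)$), and it is also the form the paper actually uses in the lemma that follows, where $w(1)<0$ is exactly the condition under which right-multiplying by $s_0$ \emph{decreases} length. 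So do not aim to ``match the claimed criteria''; part~(1) as stated has the inequality reversed, and your calculation should be trusted over it. Second, the alternative length formula you quote double-counts: the two standard equivalent formulas are $\ell(w)=\operatorname{inv}(w)+\sum_{w(i)<0}|w(i)|$ and $\ell(w)=\operatorname{inv}(w)+\operatorname{neg}(w)+\operatorname{nsp}(w)$, and since $\sum_{w(i)<0}|w(i)|=\operatorname{neg}(w)+\operatorname{nsp}(w)$, your version $\operatorname{inv}+\operatorname{nsp}+\sum_{w(i)<0}|w(i)|$ adds $\operatorname{nsp}$ twice (it assigns length $3$ to $[-2,1]=s_1s_0$, which has length $2$). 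Neither slip affects the viability of the approach, but both would surface as soon as the computations are written out in full.
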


Using this, we now prove the following lemma:

\begin{lemma}
Let $w=[a_1,\ldots, a_n]\in W(B_n)$ be as above. Then $w\in B_{n,4}$ if
and only the following holds: no pair of indices $i<j$ satisfies
$a_i>a_j$ and $a_i,a_j<0$.
\end{lemma}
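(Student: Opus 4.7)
The plan is to reformulate the lemma via Theorem~\ref{Tdim} and then analyze length-additive factorizations through the longest element of the rank-$2$ parabolic $\langle s_0,s_1\rangle \cong W(B_2)$. Denote this element by $w_0^{B_2}$; a direct computation in one-line notation gives $w_0^{B_2} = s_0s_1s_0s_1 = s_1s_0s_1s_0 = [-1,-2,3,\dots,n]$, so that left multiplication by $w_0^{B_2}$ negates precisely the values $\pm 1, \pm 2$. Combining Theorem~\ref{Tdim} with Matsumoto's theorem and the single braid move $s_0s_1s_0s_1 = s_1s_0s_1s_0$, the condition $w \in B_{n,4}$ becomes: there is no length-additive factorization $w = u \cdot w_0^{B_2} \cdot v$ with $\ell(w) = \ell(u) + 4 + \ell(v)$. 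The lemma therefore reduces to showing that such a factorization exists if and only if $w$ has a $\overline{1}\overline{2}$-pattern.

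For the direction \emph{pattern implies factorization}, I would proceed constructively: given a bad pair $a_i = -b$, $a_j = -c$ with $i<j$ and $0<b<c$, I aim to choose the right factor $v$ so that $w_0^{B_2} \cdot v$ has the values $-1, -2$ at positions $1,2$ (in that order), and the left factor $u$ so that $u$ relabels these values back to $-b, -c$ and shifts them to positions $i, j$. Concretely, $v$ and $u$ are built from shift products of simple reflections; the essential point is that, after an initial reduction to the case where the bad pair is lex-minimal (smallest $b<c$ and leftmost $i<j$ among all such pairs), each intermediate multiplication increases length by exactly one, which can be verified using the descent criteria of Theorem~\ref{Bn}.

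For the reverse direction \emph{factorization implies pattern}, assume $w = u \cdot w_0^{B_2} \cdot v$ with lengths adding. Length-additivity of $w_0^{B_2} \cdot v$ over $v$ forces specific left-descent conditions on $v$; in particular the values $1$ and $2$ in the standard one-line notation of $v$ must appear in that order and both with positive sign. Negating these values by left multiplication by $w_0^{B_2}$ then produces $-1, -2$ in bad-pair order in $w_0^{B_2}\cdot v$. Finally, the length-additivity of $u \cdot (w_0^{B_2}v)$ implies that left multiplication by $u$ preserves the left-to-right order of these two distinguished negative entries (they become $-u(1), -u(2)$ with $0<u(1)<u(2)$, else $s_1$ would be a left descent of $u$), so $w$ inherits a $\overline{1}\overline{2}$-pattern.

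The main obstacle is the bookkeeping of length-additivity at each step: the $W(B_n)$ length function involves both inversions and signs, so verifying that a given composite multiplication is length-additive requires a case analysis via Theorem~\ref{Bn}. An alternative, potentially cleaner route is induction on $\ell(w)$ using right descents: picking a descent $s_i$ of $w$, one inductively applies the lemma to $ws_i$, and then by a finite case analysis (on $i = 0$ versus $i \geq 1$, and on the local sign pattern near position $i$) one checks that both the pattern condition and the existence of a forbidden $s_0s_1s_0s_1$-substring are inherited compatibly between $w$ and $ws_i$, which closes the induction.
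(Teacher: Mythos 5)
Your reformulation of $w \notin B_{n,4}$ as the existence of a length-additive factorization $w = u\, w_0^{B_2}\, v$ is correct, and your argument for the direction ``factorization implies pattern'' is complete and genuinely different from the paper's: the paper proves this implication by induction on $\ell(w)$ with a two-case analysis on the rightmost letter of a reduced word, whereas you read the bad pair off directly from the parabolic coset conditions. Indeed, length-additivity forces $\ell(u\,w_0^{B_2}) = \ell(u)+4$ and $\ell(w_0^{B_2}v) = 4+\ell(v)$ separately; the first says $u$ has no right descent in $\{s_0,s_1\}$, i.e.\ $0 < u(1) < u(2)$, and the second says $0 < v^{-1}(1) < v^{-1}(2)$, so the values $1,2$ occur in $v$ with positive sign at positions $p<q$, whence $w(p) = -u(1) > -u(2) = w(q)$ is a bad pair. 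This is cleaner than the paper's induction (whose ``Case 2'' is exactly your argument in the special case $v = e$), and it buys you a proof with no induction at all. One terminological slip: the relevant condition is that $s_0,s_1$ are not \emph{right} descents of $u$, not left descents.

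The gap is in the converse direction, ``pattern implies factorization,'' which is where the paper does its real work. First, your construction is wrong as stated: left multiplication by $u$ sends the entry $x(p)$ of $x = w_0^{B_2}v$ to $u(x(p))$ \emph{at the same position} $p$; it relabels values and cannot ``shift them to positions $i,j$.'' If $w_0^{B_2}v$ carries $-1,-2$ at positions $1,2$, then $u\,w_0^{B_2}v$ carries its distinguished pair at positions $1,2$, not at $i,j$. The positions must be encoded in the right factor, i.e.\ one needs $v(i)=1$ and $v(j)=2$, with $u(1)=b$, $u(2)=c$ supplying the values. Second, and more importantly, even after this correction the entire content of this direction is the verification that such $u$ and $v$ can be chosen with all lengths adding, and this is precisely what you defer as ``bookkeeping.'' It is not automatic: the paper carries it out by an explicit algorithm, repeatedly multiplying $w$ on the right by length-decreasing generators (moving the leftmost negative entry one step left, or flipping its sign at position $1$) until the bad-pair values occupy positions $1$ and $2$, and then checking directly that right multiplication by $s_0s_1s_0s_1$ drops the length by $4$ more. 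Your proposed normalization (lex-minimal $b<c$, leftmost $i<j$) also differs from the paper's (minimal $|i-j|$), and one would have to check that length-additivity survives that choice. So: keep your coset argument for one implication, but the constructive implication still needs the explicit length computation that the paper supplies.
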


\begin{proof}
Suppose $w\in W(B_n)$ does not satisfy the condition: thus, there exist
$i<j$ such that $a_j < a_i < 0$. Call such a pair of indices a
\textit{bad} pair. One can pick $i,j$ with minimal $|i-j|$.
Perform the following operations on $w$: if the leftmost negative number
in $w$ occurs in position $i>1$, multiply by the transposition $s_{i-1}$.
If it occurs in position $1$, multiply by $s_0$. Each move decreases the
length by $1$, and after sufficiently many applications, this brings
$a_i$ to position $1$. By another sequence of length-decreasing
transpositions, we can bring $a_j$ to $2$. Now one verifies that
multiplying by $s_0s_1s_0s_1$ further reduces the length by $4$. Thus
$ww'=w''$, where $w'$ contains the string $s_0s_1s_0s_1$, and
$\ell(w)-\ell(w')=\ell(w'')$. This implies $w'^{-1}w''$ is a reduced
expression for $w$ with the substring $s_1s_0s_1s_0$, and thus $w\not\in
B_{n,4}$.

For the other direction, suppose $w$ satisfies the given condition. We
will prove $w\in B_{n,4}$ by induction on $\ell(w)$. The base cases of
$\ell(w) \leqslant 1$ are clear. For the induction step, suppose to the
contrary that $w$ has a reduced expression with $s_0s_1s_0s_1$ in it, and
let $s_i$ be the rightmost generator in that expression. Then $w=w's_i$,
with $\ell(w')=\ell(w)-1$. There are two cases:

\noindent \textit{Case 1}:
First suppose $s_0s_1s_0s_1$ does not occur at the rightmost position in
this expression for $w$, so it has to occur in the expression for $w'$.
Now $ws_i$ has smaller length than $s$, so if $i\geqslant 1$,
$w(i)>w(i+1)$. Multiplying by $s_i$ swaps these, so if $w$ does not have
a bad pair, swapping $w(i)$ with $w(i+1)$ cannot create further bad
pairs. Thus by the induction hypothesis, $ws_i=w'$ does not have a
reduced expression containing $s_0s_1s_0s_1$, a contradiction. Similarly,
if $i=0$, $w(1)$ must be negative, and swapping its sign cannot create
more bad pairs. By a similar reasoning as above, we are done in this
case.

\noindent \textit{Case 2}:
Otherwise, $w=w''s_0s_1s_0s_1$, with $\ell(w'')=\ell(w)-4$.
As this is a reduced expression, $\ell(ws_1)<\ell(w)$, implying
$w(1)>w(2)$. Now $w_1=ws_1=[w(2),w(1),\ldots]$, so
$\ell(w_1s_0)<\ell(w_1)$ implies $w_1(1)=w(2)<0$. Next,
$w_2=w_1s_0=[-w(2),w(1),\ldots ]$, so $w_3=w_2s_1=[w(1),-w(2),\ldots ]$,
and $w_4=w_3s_0=[-w(1),-w(2),\ldots]$. Since $\ell(w_3s_0)<\ell(s_3)$, we
have $w_3(1)=w(1)<0$. Thus $(1,2)$ is a bad pair in $w$, a contradiction.
This finishes the proof.
\end{proof}

Returning to the main proof, we want to enumerate the elements in
$W(B_n)$ without a bad pair. This is counted in \cite{Sim} as the
$\overline{1}\overline{2}$-avoiding signed permutations, and the answer
is precisely $\sum_{k=0}^n\binom{n}{k}^2k!$. This shows the result for
type $B_n$.

The only remaining cases are $F_4$, $H_3$ and $H_4$: all of finite
$\bk$-rank. By computer checking and use of Theorem \ref{Tdim}, one
verifies the numbers stated in Theorem \ref{T4}. The proof is now
complete.
\end{proof}

We conclude by proving the two corollaries to Theorem~\ref{T4}.

\begin{proof}[Proof of Corollary \ref{T5}]
Since $\nilhecke{<5} \twoheadrightarrow \nilhecke{<4}$, it suffices to
check the cases in Theorem~\ref{T3}. The dihedral case is
easy to verify, and the case of type $A$ with ${\bf d} = (d,2,\dots,2)$
or $(2,\dots,2,d)$ is immediate since there are no braid relations of
length $4$ or more in this case. As the same fact holds for types
$A,B,D,E,F$, hence for these types (with ${\bf d} = (2,\dots,2)$) it
follows that $\nilhecke{<5} = \nilhecke{<\infty} = NC_W((2,\dots,2))$ is
the usual nil-Coxeter algebra, and hence has $\bk$-rank $|W|$.

Finally, suppose $W$ is of type $H_3$ or $H_4$, and ${\bf d} =
(2,\dots,2)$. As all braid words have length either $3$ or $5$, it
follows that $J_{<3} = J_{<4}$ in both cases, and so the algebras and
hence their $\bk$-ranks remain unchanged when passing from $k=4$ to $k=5$.
\end{proof}

\begin{proof}[Proof of Corollary \ref{T6}]
Since $\nilhecke{<k} \twoheadrightarrow \nilhecke{<5}$, to identify the
finite $\bk$-rank nil-Hecke algebras, we need only consider the cases in
Corollary \ref{T5}, i.e., for $k=5$. Once again the case of type $A$ with
${\bf d} = (d,2,\dots,2)$ or $(2,\dots,2,d)$ is immediate, and the
dihedral case of $I_2(m)$ is again easily checked. Since none of the
remaining cases have braid relations of length $6$ or more, in all
non-dihedral cases with ${\bf d} = (2,\dots,2)$, we have $\nilhecke{<k} =
\nilhecke{<\infty} = NC_W((2,\dots,2))$ as above. This completes the
proof.
\end{proof}

\section{Proof of Theorem~\ref{TNil}:
nilpotence of the augmentation ideal}

We next show that the finite-dimensionality of a nil-Hecke algebra is
equivalent to the nilpotence of the augmentation ideal $\m$.

\begin{proof}[Proof of Theorem \ref{TNil}]
The key equivalence to be shown here is $(1) \Longleftrightarrow
(3)$; that $(1) \Longleftrightarrow (2)$ was shown in earlier sections.
First assume $\nilhecke{<k}$ has infinite $\bk$-rank. Recall from above
that every such algebra was shown to have infinite rank by demonstrating
an explicit cyclic $\nilhecke{<k}$-module of infinite $\bk$-rank;
moreover, every diagram above that described such a module has at least
one directed cycle that is ``accompanied'' by a ``$+$'' symbol. Let the
generators along the edges of any such fixed cycle be
$\alpha_1,\alpha_2,\dots, \alpha_m$, and suppose a basis element $v$ in
that module is associated to the initial node of the edge corresponding
to $\alpha_1$. Take an arbitrary positive integer $N$. Then the element
${\bf s} = ({\bf s}_{\alpha_m} \cdots {\bf s}_{\alpha_2} {\bf
s}_{\alpha_1})^j$ belongs to $\m^N$ for large enough $j \gg 0$, and ${\bf
s} v$ is non-zero by how the module action is defined. This implies ${\bf
s} \in \m^N$ is non-zero; and since $N$ was arbitrary, $\m$ is not
nilpotent.

For the other direction, assume $\nilhecke{<k}$ has finite $\bk$-rank.
Applying Theorem~\ref{Tdimplus}, one obtains a finite set $S_{\rm fin}$
of monomials corresponding to equivalence classes in $S$ (as defined in
Theorem~\ref{Tdimplus}) that gives a $\bk$-basis of the algebra in
question. Thus, $S_{\rm fin}$ has an element of maximal length, say
$\ell$. We claim that $\mathfrak m^{\ell+1}=0$, which would prove the
desired nilpotency.

Indeed, consider a monomial ${\bf s}'$ in $\mathfrak m^{\ell+1}$: this
must be a product of strictly more than $\ell$ generators. If ${\bf s}'$
equals some monomial composed of $\ell$ generators or less, one can
reduce this via using the defining relations of $\nilhecke{<k}$. Since
applying the braid relations does not change the number of monomials,
this can be reduced to some monomial containing the string ${\bf
s}_i^{d_i}$ or a braid word of length $\geqslant k$ at some point, and
hence it is zero in $\nilhecke{<k}$.

On the other hand, if ${\bf s}'$ is not equal in $\nilhecke{<k}$ to any
monomial composed of $\ell$ generators or less, the corresponding string
of ${\bf s}_i$'s cannot belong to any equivalence class in $S$, hence
reduces to $0$ (as in the proof of Theorem~\ref{Tdimplus}).

This completes the proof of the equivalence.
Finally, suppose these conditions hold and $\nilhecke{<k}$ is
finite-dimensional, where $\bk$ is now a field. It is clear that if $x
\in \m$ and $\m^{\ell+1} = 0$, then $(1-x)^{-1} = 1 + x + \cdots +
x^\ell$, and so $\nilhecke{<k}$ is local.
\end{proof}

\begin{remark}
In light of Theorem~\ref{Tcomplex} (proved in the next section), one can
ask if Theorem~\ref{TNil} holds for the finite-rank nil-Hecke algebras
over complex reflection groups. This is indeed the case, because -- as we
explain in the next section -- there are precisely two such algebras, and
they essentially arise from real affine reflection groups, so
Theorem~\ref{TNil} applies to them too.
\end{remark}

\section{Proof of Theorem~\ref{Tcomplex}: nil-Hecke algebras over complex
reflection groups}

We now turn to the assertion -- extending Marin's assertion \cite{Ma} for
nil-Coxeter algebras and its extension in \cite{Kh2} to generalized
nil-Coxeter algebras -- that finite complex reflection groups admit no
finite-dimensional nil-Hecke algebras.
This is ``half'' of Theorem~\ref{Tcomplex}, and we then produce (exactly)
two finite-dimensional examples over infinite complex groups. After the
proof, we turn to the remaining case of ``non-genuine'' crystallographic
groups $W$.

\begin{proof}[Proof of Theorem~\ref{Tcomplex}]
We begin by discussing the finite (irreducible) complex reflection
groups, for which a presentation can be found in \cite{BMR2}; note that
$k \geq 3$ here. Now a straightforward verification using this
presentation reveals that all corresponding nil-Hecke algebras of the
form $\nilhecke{<k}$ have infinite $\bk$-rank. Indeed, the arguments in
\cite[Section 6, Cases 10--12]{Kh2} (which prove the same statement in
the special case $k=\infty$) work for $3 \leq k < \infty$ as well. The
only point where our proof diverges from \cite{Kh2} is when $W=G_{29}$.

In this exceptional case, we note that the corresponding (``usual'')
nil-Coxeter algebra is generated by the generators ${\bf s}_s, {\bf s}_t,
{\bf s}_u, {\bf s}_v$ subject to the following relations:
\begin{gather*} 
{\bf s}_s^2={\bf s}_t^2={\bf s}_u^2={\bf s}_v^2=0,\quad {\bf s}_s{\bf
s}_v={\bf s}_v{\bf s}_s,\quad {\bf s}_s{\bf s}_u={\bf s}_u{\bf s}_s,\\ 
{\bf s}_s{\bf s}_t{\bf s}_s={\bf s}_t{\bf s}_s{\bf s}_t,\quad {\bf
s}_v{\bf s}_t{\bf s}_v={\bf s}_t{\bf s}_v{\bf s}_t,\quad {\bf s}_u{\bf
s}_v{\bf s}_u={\bf s}_v{\bf s}_u{\bf s}_v,\\
{\bf s}_t{\bf s}_u{\bf s}_t{\bf s}_u={\bf s}_u{\bf s}_t{\bf s}_u{\bf
s}_t,\quad {\bf s}_v{\bf s}_t{\bf s}_u{\bf s}_v{\bf s}_t{\bf s}_u={\bf
s}_t{\bf s}_u{\bf s}_v{\bf s}_t{\bf s}_u{\bf s}_v.
\end{gather*}

The corresponding nil-Hecke algebras $\nilhecke{<k}$ are obtaining as a
quotient of this by (first replacing the Coxeter relations/exponents $2$
by $d_i$, and then) killing suitable braid words depending on $k$. One
notes that for all of these, the diagram in Figure \ref{G29module}
defines a suitable infinite-rank $\bk$-module. This completes the proof
for finite complex reflection groups.

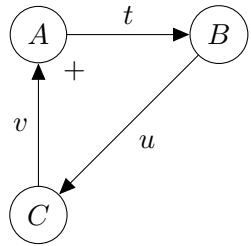
\begin{figure}[ht]
\begin{tikzpicture}[line cap=round,line join=round,>=triangle 45,x=1.2cm,y=1.2cm]
\tikzstyle{every circle node}=[draw]
\modulenodeA{0}{0}{A};
\modulenodeA{2}{0}{B};
\modulenodeA{0}{-2}{C};
\draw (.4,-.4) node {+};
\moduleedge{A}{B}{$t$}{above};
\moduleedge{B}{C}{$u$}{below right};
\moduleedge{C}{A}{$v$}{left}
\end{tikzpicture}
\caption{An infinite rank module for $G_{29}$}\label{G29module}
\end{figure}

We next turn to the infinite irreducible groups (i.e., associated to
connected braid diagrams). These were described in \cite{Mal,Po1}, and
are each associated to a complex affine space $E$ with group of
translations $V$. We fix a basepoint $v_0 \in E$ and identify $GL(V)
\ltimes V \cong {\rm Aff}(E)$, the affine transformation group of $E$. As
is explained in \textit{loc.\ cit.}, $W \leq {\rm Aff}(E)$. Now invoke
the results on \cite[pp.~30]{Po1}, to note that if $\nilhecke{<k}$ has
finite rank, then the same holds with ${\bf d}$ replaced by
$(2,2,\dots,2)$. But then $W = W_{\mathbb{R}}$ is an irreducible affine
Coxeter group that acts on a real form $E_\mathbb{R}$ of $E$, and the
action of $W$ on $E = \mathbb{C} \otimes_\mathbb{R} E_\mathbb{R}$ is
simply its complexification. Now $\nilhecke{<k} =
n\mathcal{H}(W_{\mathbb{R}}, {\bf d}, J_{<k})$ has finite $\bk$-rank,
hence so does $n\mathcal{H}(W_{\mathbb{R}}, (2,2,\dots,2), J_{<k})$. As
$W_\mathbb{R}$ is affine and $k \geq 3$, our results in previous sections
(see the penultimate row of Table~\ref{Ttable}) show that $k=3$ and
$W = W_\mathbb{R}$ is affine of type $E_9$ or $F_5$.

The remaining case is when $W$ is a genuine crystallographic group, i.e.,
$E/W$ is compact (with $W \leq {\rm Aff}(E)$) and $W /(W \cap V)$ is not
the complexification of a real reflection group (where $V \leq {\rm
Aff}(E)$ consists of the translations). For such groups, Coxeter-type
presentations were provided by Malle in \cite[Tables  I, II]{Mal}; and in
Case~$14$ in the proof of \cite[Theorem D]{Kh2}, it was shown that there
were no generalized nil-Coxeter algebras (i.e., with arbitrary ${\bf d}$
but with $k=\infty$). One can now verify that the proofs in \cite{Kh2} in
\textit{loc.\ cit.}\ for the $k = \infty$ case, also go through verbatim
if $3 \leq k < \infty$.
\end{proof}

\subsection{Non-genuine, crystallographic complex reflection groups}

As explained in \cite{Po1}, there is a third class of (irreducible)
discrete infinite complex reflection groups, which were termed
``non-genuine crystallographic'' in \cite[Section~6]{Kh2} -- and so it is
natural to ask if the nil-Hecke algebras over these groups also have
infinite $\bk$-rank. As we now explain, a ``specific'' Coxeter-type
presentation for these groups -- arising out of work of Popov \cite{Po1}
and Ion--Sahi \cite{IS} -- involves a braid-type relation equating two
braid words of unequal lengths, and so the nil-Hecke algebras here are
not defined.

We now elaborate on this. First, like the other complex groups, the
non-genuine crystallographic complex reflection groups also admit a
Coxeter-type presentation, as was explained to the second author by Popov
\cite{Po2}. For the details, we refer the reader to Case~$15$ in the
proof of \cite[Theorem~D]{Kh2}. In brief: retaining the notation in the
preceding proof, $W \leq {\rm Aff}(E) = GL(V) \ltimes V$ is now given by
$W = {\rm Lin}(W) \ltimes {\rm Tran}(W)$, where ${\rm Lin}(W) := W \cap
V$ and ${\rm Tran}(W) := W / (W \cap V)$. Moreover, $W' := {\rm Lin}(W)$
is a finite irreducible real reflection group (in fact a Weyl group), and
${\rm Tran}(W) \cong \Lambda_1 \oplus \tau \Lambda_1$ for some
$\Z$-lattice $\Lambda_1$ of full rank, and a scalar $\tau \in \mathbb{C}
\setminus \mathbb{R}$ (which we may thus take to lie in the upper
half-plane). Moreover, the affine Weyl group $\widetilde{W}$ over $W$
satisfies:
\[
W' \ltimes \Lambda_1 \cong \widetilde{W} \cong W' \ltimes \tau \Lambda_1,
\]
so that $W$ is, in a sense, a ``double affine Weyl group''.

Now fix a simple base $\Pi$ for the root system $\Phi$ of $W'$, and let
$\theta$ denote the unique highest root for $\Phi$. Define $a_{01} :=
t(\theta) s_\theta$ to be the extra affine reflection in $W' \ltimes
\Lambda_1$, where $t(\theta)$ is the translation by $\theta$ in
$E_\mathbb{R}$ and $s_\theta \in O(E_\mathbb{R})$ is the reflection
sending $\theta$ to $-\theta$. Similarly, let $a_{02} := t(\tau \theta)
s_\theta$ denote the extra affine reflection in $W' \ltimes \tau
\Lambda_1$. Then the non-genuine crystallographic infinite irreducible
discrete complex reflection group $W$ (which is a double affine Weyl
group) is generated by $\{ s_\alpha : \alpha \in \Pi \}$ and $a_{01},
a_{02}$. This does have a Coxeter-type presentation, as was explained to
the second author by Sahi \cite{Sahi}. To obtain this, first note that
the map
\[
a_{03} := a_{01} s_\theta a_{02} = t((1+\tau)\theta) s_\theta
\]
also squares to the identity map on $E_\mathbb{R}$. In particular, we
obtain the further relation
\begin{equation}\label{Esahi}
a_{01} a_{03} a_{02} = s_\theta;
\end{equation}
moreover, $W'$ and $a_{03}$ also generate an affine Weyl group, again
isomorphic to $\widetilde{W}$.

Now Ion and Sahi have shown -- see the end of \cite[Section 5]{IS} --
that $W$ has a Coxeter-type presentation, with generators given by $\{
s_\alpha : \alpha \in \Pi \} \sqcup \{ a_{01}, a_{02}, a_{03} \}$; the
relations are that the subset $\{ s_\alpha : \alpha \in \Pi \} \sqcup \{
a_{0j} \}$ satisfies the Coxeter presentation for $\widetilde{W}$ for
$j=1,2,3$, and the additional relation~\eqref{Esahi} holds. (Note that
$a_{0j} a_{0j'}$ has infinite order in $W$ for $j \neq j'$.)

With this presentation at hand, we return to our original question of
interest: to examine the finite dimensionality (or $\bk$-rank) of the
associated nil-Hecke algebra $\nilhecke{<k}$ for $k \geq 3$. Note that if
this happens then the same holds with ${\bf d}$ replaced by
$(2,\dots,2)$, and moreover with $W$ replaced by $\widetilde{W}$ (by
killing $a_{02}, a_{03}$). As in the proof of Theorem~\ref{Tcomplex},
this implies that $k=3$ and $W'$ is finite of type $E_8$ or $F_4$. (In
particular, there is a unique node $\alpha \in \Pi$ to which the three
affine nodes $a_{0j}$ are attached, each by a single edge.) But now the
braid words on both sides of the extra relation~\eqref{Esahi} have
unequal lengths, and so the algebra $\nilhecke{<3}$ is not defined.

\section{Proof of Theorem~\ref{TFrobenius}: classification of Frobenius
nil-Hecke algebras}

The goal of this final section is to show Theorem~\ref{TFrobenius}. We
\textbf{assume} throughout this section that $\bk$ is a field. Below, we
will use without further mention the fact that the spaces of
left-primitive and right-primitive elements of $\nilhecke{<k}$ are
linearly isomorphic -- working over any Coxeter group $W$ -- via the
anti-involution on $\nilhecke{<k}$ that sends each generator
$\mathbf{s}_i$ to itself.
As a first step, we classify the cases where the set of primitive
elements have dimension $1$.

\begin{theorem}\label{TPrim}
Fix a Coxeter group $W$ with related data $I,J,{\bf S},\R$, integers $d_i
\geqslant 2\ \forall i$ and $1 \leqslant k \leqslant \infty$. 
Suppose the corresponding nil-Hecke $\bk$-algebra $\nilhecke{<k}$ is
finite-dimensional. Then the space of primitive elements in
$\nilhecke{<k}$ is one-dimensional if and only if one of the following
holds:
\begin{enumerate}
    \item $d_i=2$ for all $i$, and $W$ contains no braid relations of length $k$ or more;
    \item $k\in\{1,2\}$, $W$ is of type $B$ or $A_1$, and $d_i=2$ for all $i$;
    \item $k\in\{1,2\}$, $W$ is of type $A$, $d_i=3$ for for exactly one of the pendant vertices of the Coxeter graph of $W$ and $d_i=2$ for all other vertices;
    \item $k=3$, $W$ is of the type $A_1$ or $H_3$ and $d_i=2$ for all $i$;
    \item $k=3$, $W$ is of type $A_2$ with $d_i=3$ for exactly one vertex and $d_i=2$ for the other;
    \item $k\geqslant 3$ and $W$ is of type $A_1$ with $d_i>2$ for the only vertex present.
\end{enumerate}
\end{theorem}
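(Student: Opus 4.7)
The plan is to exploit the word-length grading on $\nilhecke{<k}$—whose defining relations (the orders $\mathbf{s}_i^{d_i}=0$ and the braid relations $\mathbf{v}_j-\mathbf{w}_j$ or $\mathbf{v}_j=\mathbf{w}_j=0$) are all homogeneous—combined with the monomial basis from Theorem~\ref{Tdimplus}. Since $x = \sum_d x_d$ (with $x_d$ of degree $d$) satisfies $\mathbf{s}_i x = \sum_d \mathbf{s}_i x_d$ with each summand of degree $d+1$, primitivity is a graded condition: $x$ is primitive iff each $x_d$ is. In particular, if $N$ is the maximal degree with $\mathfrak{m}^N \neq 0$, then every element of degree $N$ is automatically primitive, so the primitive space contains $\mathfrak{m}^N$. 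The task thus reduces to checking, for each finite-dimensional case from the classification (Theorems~\ref{T12},~\ref{T3},~\ref{T4} and Corollaries~\ref{T5},~\ref{T6}), whether (a) the top-degree piece $\mathfrak{m}^N$ is one-dimensional, and (b) no element of lower positive degree is primitive.

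For the sufficiency direction, I will verify each of the six listed cases. Case~(1) gives the usual nil-Coxeter algebra of a finite Coxeter group, where $\mathfrak{m}^N = \bk\,\mathbf{s}_{w_\circ}$ is spanned by the longest element and any shorter $\mathbf{s}_w$ has a right ascent, hence $\mathbf{s}_w \mathbf{s}_i \neq 0$ for some $i$. Cases~(4,~$A_1$) and~(6) give $\bk[x]/(x^d)$ with unique primitive $x^{d-1}$. In Case~(2) with $W = B_n$, since $J_{<k} = \emptyset$ all commutation and braid words are killed, and the palindrome $\mathbf{s}_{n-1} \cdots \mathbf{s}_0 \cdots \mathbf{s}_{n-1}$ is the unique top-degree word; lower-degree candidates (e.g.\ $\mathbf{s}_1 \mathbf{s}_0 \mathbf{s}_1 \cdots \mathbf{s}_{n-1}$) are only one-sided primitive. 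Case~(3) with $W = A_n$ and $\mathbf{d} = (d,2,\dots,2)$ is analogous, with primitive $\mathbf{s}_n \cdots \mathbf{s}_2 \mathbf{s}_1^{d-1} \mathbf{s}_2 \cdots \mathbf{s}_n$; Case~(5) is the $n=2$ subcase for $k=3$ with primitive $\mathbf{s}_2 \mathbf{s}_1^{d-1} \mathbf{s}_2$. For Case~(4),~$H_3$, direct verification on the small monomial basis of $NTL_{H_3}((2,2,2))$ (aided by the SAGE code in the Appendix) establishes both parts.

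For the necessity direction, I exhibit two linearly independent primitives in every other finite-dimensional case. For dihedral $I_2(m)$ with $m \geq 3$, $k \leq 2$, $d_i = 2$ (including $B_2 = I_2(4)$ and $H_2 = I_2(5)$), the two alternating words $\underbrace{\mathbf{s}_1 \mathbf{s}_2 \mathbf{s}_1 \cdots}_{m-1}$ and $\underbrace{\mathbf{s}_2 \mathbf{s}_1 \mathbf{s}_2 \cdots}_{m-1}$ are each primitive. For $A_n$ with $n \geq 2$, $k \leq 2$, $d_i = 2$, both $\mathbf{s}_1 \mathbf{s}_2 \cdots \mathbf{s}_n$ and $\mathbf{s}_n \cdots \mathbf{s}_1$ are primitive. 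For the remaining exceptional cases---$B_n$ with $k=4$ and $n \geq 2$; $F_4,\ H_4$ with $k \in \{4,5\}$; the generalized XYX algebras $NTL_{A_n}((d,2,\dots,2))$ with $n \geq 3$; and the nil-Temperley--Lieb algebras of $E_n,\ F_n,\ H_n$ in the appropriate range---explicit construction of two linearly independent primitives proceeds by identifying ``maximal'' words whose one-sided extensions all create killed substrings (e.g.\ palindromic words supported on different ends of the Dynkin diagram).

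The main obstacle is the uniform construction of two primitives in the exceptional families, especially the generalized XYX algebras $NTL_{A_n}((d,2,\dots,2))$ with $n \geq 3$, whose multiplicative structure does not admit a single longest word of the type one has in usual nil-Coxeter algebras. A secondary difficulty in the sufficiency proof is the uniqueness argument in Cases~(2)--(5): for every basis monomial not equal to the candidate top-degree primitive, one must verify that some left- or right-multiplication by a generator yields a nonzero basis element, which prevents its coefficient from appearing in any primitive linear combination.
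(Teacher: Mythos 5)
Your overall strategy coincides with the paper's: reduce primitivity to a question about basis monomials (you make the grading reduction explicit, which the paper leaves implicit), exhibit two independent primitive monomials in every finite-dimensional case not on the list, and enumerate monomials in the listed cases. The two genuine differences are in execution. First, the necessity direction --- which you yourself flag as ``the main obstacle'' --- is precisely where the paper does the bulk of its work, producing explicit pairs of primitive monomials case by case in Tables~\ref{TPrimTable} and~\ref{TPrimTable4}; your list of remaining exceptional cases also omits $H_3$ at $k\in\{4,5\}$, which is finite-dimensional and not on the theorem's list, so it too needs two primitives. As written, this half of your argument is a plan rather than a proof.

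Second, and more seriously, your treatment of type $B$ at $k\in\{1,2\}$ is internally inconsistent. In the necessity direction you correctly note that for $I_2(4)=B_2$ the two alternating words $\mathbf{s}_1\mathbf{s}_2\mathbf{s}_1$ and $\mathbf{s}_2\mathbf{s}_1\mathbf{s}_2$ survive in $\nilhecke{<2}$ (only the length-$4$ alternating words are killed) and are both two-sided primitive; but $B_2$ is of type $B$, i.e.\ it falls under case~(2), which you then claim to verify has a one-dimensional primitive space. The obstruction is not special to $n=2$: in your labeling with $m_{01}=4$, the monomial $\mathbf{s}_0\mathbf{s}_1\mathbf{s}_0$ is a nonzero basis element of $\nilhecke{<2}$ for every $B_n$ (it contains no $\mathbf{s}_i^2$, no killed commutation word, and no length-$4$ alternating word), and every one-sided multiplication by a generator produces one of those killed words; so it is a second two-sided primitive alongside the long palindrome, and your claim that all lower-degree candidates are ``only one-sided primitive'' fails for it. You should be aware that the paper's own enumeration in Part~5(2) of its proof (forms (i) and (ii)) misses exactly this monomial --- the unique walk that changes direction at $v_{n-1}$ rather than at $v_n$ --- so the discrepancy you have reproduced points at case~(2) of the statement itself and must be resolved before either argument can be considered complete.
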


\begin{proof}
We first show that the cases other than those mentioned above yield at
least two linearly independent primitive elements -- in fact, two
distinct primitive monomials.\medskip

\noindent \textit{Part 1}:
Suppose $k\in\{1,2\}$. Suppose first the associated Coxeter graph is a
simply laced tree with at least two vertices, and no vertex $i$ has
$d_i>2$. This tree necessarily has a maximal path of length at least two,
say
\[
v_1\quad\longleftrightarrow \quad v_2 \quad\longleftrightarrow\quad\cdots
\quad\longleftrightarrow \quad v_n.
\]
Letting $\mathbf{s}_i$ be the generator associated to vertex $v_i$, one
can check that $\mathbf{s}_1\mathbf{s}_2\cdots\mathbf{s}_n$ and
$\mathbf{s}_n\mathbf{s}_{n-1}\cdots \mathbf{s}_1$ are two distinct
primitive monomials. In case the graph contains a multiple edge
(respectively, a vertex with $d_i>2$) that is not at a pendant vertex,
one can again form a maximal path containing this edge (respectively,
this vertex) at some point other than the two ends. Then the same
argument as above gives again two distinct primitive monomials.

Now suppose this graph has a multiple edge at its end, with all $d_i=2$.
If there is some vertex $v$ with degree at least $3$, then deleting this
gives rise to three or more disconnected components, only one of which
contains the multiple edge. Suppose $v$ is connected to $v_1$ and $v_2$,
each of which belong to a component not containing the multiple edge.
Extending the path $v_1 v v_2$ gives us a maximal path with no multiple
edge, and as before, we can conclude this does not correspond to a
Frobenius algebra. 

Thus we can only have a path graph with a multiple edge as one end. Label
the vertices $v_1,v_2,\ldots, v_n$ where $m=m_{v_{n-1},v_n}>3$. Since
this is not of type $B$, $m\ge 5$. If $m=5$, again calling the generator
associated to vertex $v_i$ as $\mathbf{s}_i$, we see that
$\mathbf{s}_1\mathbf{s}_2\cdots
\mathbf{s}_{n-1}\mathbf{s}_n\mathbf{s}_{n-1}\ldots\mathbf{s}_2\mathbf{s}_1$
and $\mathbf{s}_1\mathbf{s}_2\cdots
\mathbf{s}_{n-1}\mathbf{s}_n\mathbf{s}_{n-1}\mathbf{s}_n$ are distinct
and primitive. If $m\ge 6$, then
$\mathbf{s}_1\mathbf{s}_2\cdots
\mathbf{s}_{n-1}\mathbf{s}_n\mathbf{s}_{n-1}\ldots\mathbf{s}_2\mathbf{s}_1
\neq \mathbf{s}_1\mathbf{s}_2\cdots
\mathbf{s}_{n-1}\mathbf{s}_n\mathbf{s}_{n-1}\mathbf{s}_n\mathbf{s}_{n-1}\mathbf{s}_{n-2}\ldots\mathbf{s}_2\mathbf{s}_1$
are both primitive.

According to Theorem \ref{T12} and the previous analysis, the only
remaining case is when the Coxeter graph is a simply laced tree, with
$d_v>2$ for a pendant vertex $v$. Using the above arguments, one can show
that this graph is necessarily a path graph. Serially label its vertices
$v_1,v_2, \ldots, v_{n-1},v_n=v$.
If $d_v>3$, then again $\mathbf{s}_1\mathbf{s}_2\cdots
\mathbf{s}_{n-1}\mathbf{s}_n^2\mathbf{s}_{n-1}\ldots\mathbf{s}_2\mathbf{s}_1
\neq \mathbf{s}_1\mathbf{s}_2\cdots
\mathbf{s}_{n-1}\mathbf{s}_n^3\mathbf{s}_{n-1}\ldots\mathbf{s}_2\mathbf{s}_1$
are two distinct primitive monomials. Thus the only remaining possibility
is $d_v=3$, which is accounted for in (3).\medskip

\noindent \textit{Part 2}:
Next, assume $k=3$. For each case where $\nilhecke{<k}$ is
finite-dimensional as described in Theorem \ref{T3}, we exhibit two
distinct primitive monomials, except for the cases mentioned in the
statement of Theorem \ref{TPrim}. These monomials are tabulated in Table
\ref{TPrimTable}.\medskip

\begin{table}[ht]
    \centering
    \begin{tabular}{|c|c| >{\centering\arraybackslash}m{7cm}|}
    \hline
         Type & Dynkin diagram & Primitive monomials  \\
         \hline
          $A_n (n\geqslant 2)$ & \dynkin[labels={1,2,n}] A{oo.o} & 
              $\mathbf{s}_1\mathbf{s}_2\cdots\mathbf{s}_n$, \quad $\mathbf{s}_n\cdots \mathbf{s}_2\mathbf{s}_1$\\
              \hline
          $B_n (n\geqslant 3)$ & \dynkin [backwards,labels={n,3,2,1},arrows=false] B{o.ooo} &  $\mathbf{s}_n\mathbf{s}_{n-1}\cdots \mathbf{s}_2\mathbf{s}_1\mathbf{s}_2\cdots\mathbf{s}_{n-1}\mathbf{s}_n$, $\mathbf{s}_{n-1}\cdots \mathbf{s}_2\mathbf{s}_1\mathbf{s}_n\mathbf{s}_2\cdots\mathbf{s}_{n-1}$\\
           \hline
          $D_{n+1} (n\geqslant 3)$ & \dynkin[labels={1,2,,n-1,n,n'}, label directions={,,,right,,}] D{oo.oooo} & $\mathbf{s}_1\mathbf{s}_2\cdots\mathbf{s}_{n-1}\mathbf{s}_n\mathbf{s}_{n'}\mathbf{s}_{n-1}\cdots\mathbf{s}_2\mathbf{s}_1$, $\mathbf{s}_2\cdots\mathbf{s}_{n-1}\mathbf{s}_n\mathbf{s}_1\mathbf{s}_{n'}\mathbf{s}_{n-1}\cdots\mathbf{s}_2$\\
          \hline
          $E_6$ & \dynkin[labels={1,6,2,3,4,5}] E{oooooo}& $\mathbf{s}_5\mathbf{s}_4\mathbf{s}_3\mathbf{s}_2\mathbf{s}_6\mathbf{s}_3\mathbf{s}_1\mathbf{s}_6\mathbf{s}_2\mathbf{s}_3\mathbf{s}_4\mathbf{s}_5$, $\mathbf{s}_1\mathbf{s}_2\mathbf{s}_3\mathbf{s}_4\mathbf{s}_6\mathbf{s}_3\mathbf{s}_5\mathbf{s}_6\mathbf{s}_4\mathbf{s}_3\mathbf{s}_2\mathbf{s}_1$\\
          \hline
          $E_n (n\geqslant 7)$ & \dynkin[labels={1,n,2,3,4,,n-1}] E{ooooo.oo} & $\mathbf{s}_{n-1}\cdots \mathbf{s}_4\mathbf{s}_3\mathbf{s}_2\mathbf{s}_7\mathbf{s}_1\mathbf{s}_3\mathbf{s}_7\mathbf{s}_2\mathbf{s}_3\mathbf{s}_4\cdots\mathbf{s}_{n-1}$, $\mathbf{s}_{n-2}\cdots \mathbf{s}_4\mathbf{s}_3\mathbf{s}_2\mathbf{s}_7\mathbf{s}_1\mathbf{s}_{n-1}\mathbf{s}_3\mathbf{s}_7\mathbf{s}_2\mathbf{s}_3\mathbf{s}_4\cdots\mathbf{s}_{n-2}$\\
          \hline
          $F_4$ & \dynkin[labels={1,2,3,4},arrows=false] F{oooo} &
	  $\mathbf{s}_4\mathbf{s}_3\mathbf{s}_2\mathbf{s}_1\mathbf{s}_3\mathbf{s}_2\mathbf{s}_3\mathbf{s}_4$, \quad $\mathbf{s}_1\mathbf{s}_2\mathbf{s}_3\mathbf{s}_4\mathbf{s}_2\mathbf{s}_3\mathbf{s}_2\mathbf{s}_1$\\
          \hline
          $F_n (n\geqslant 5)$ & \dynkin[extended, labels={n,,3,2,1},backwards, arrows=false] F{o.ooo} & $\mathbf{s}_n\cdots\mathbf{s}_3\mathbf{s}_2\mathbf{s}_1\mathbf{s}_3\mathbf{s}_2\mathbf{s}_3\cdots\mathbf{s}_n$, $\mathbf{s}_{n-1}\cdots\mathbf{s}_3\mathbf{s}_2\mathbf{s}_1\mathbf{s}_n\mathbf{s}_3\mathbf{s}_2\mathbf{s}_3\cdots\mathbf{s}_{n-1}$\\
          \hline
          $H_4$ & \dynkin[labels={1,2,3,4}] H{oooo}& $\mathbf{s}_4\mathbf{s}_3\mathbf{s}_2\mathbf{s}_1\mathbf{s}_2\mathbf{s}_1\mathbf{s}_3\mathbf{s}_2\mathbf{s}_1\mathbf{s}_2\mathbf{s}_4\mathbf{s}_3$, $\mathbf{s}_4\mathbf{s}_3\mathbf{s}_2\mathbf{s}_1\mathbf{s}_2\mathbf{s}_1\mathbf{s}_3\mathbf{s}_2\mathbf{s}_1\mathbf{s}_2\mathbf{s}_3\mathbf{s}_4$\\
          \hline
          $H_n (n\geqslant 5)$ &\dynkin[labels={1,2,3,n}] H{ooo.o} & $\mathbf{s}_n\cdots\mathbf{s}_3\mathbf{s}_2\mathbf{s}_1\mathbf{s}_2\mathbf{s}_1\mathbf{s}_3\mathbf{s}_2\mathbf{s}_1\mathbf{s}_2\mathbf{s}_3\cdots\mathbf{s}_n$, $\mathbf{s}_{n-1}\cdots\mathbf{s}_3\mathbf{s}_2\mathbf{s}_1\mathbf{s}_2\mathbf{s}_1\mathbf{s}_n\mathbf{s}_3\mathbf{s}_2\mathbf{s}_1\mathbf{s}_2\mathbf{s}_3\cdots\mathbf{s}_{n-1}$\\
          \hline
          $I_2(m) (m\geqslant 4)$ & \dynkin[labels={1,2},gonality=m]I{oo}  & $\;\;\mathbf{s}_1\mathbf{s}_2\mathbf{s}_1\cdots$ ($m-1$ generators),\;\; $\mathbf{s}_2\mathbf{s}_1\mathbf{s}_2\cdots$ ($m-1$ generators)\\
          \hline
          $A_n(n\geqslant 3),$ & \multirow{2}{*}{\dynkin[labels={1,2,n},labels*={d,,}] A{*o.o}}
	  & $\mathbf{s}_n\cdots \mathbf{s}_2\mathbf{s}_1\mathbf{s}_1\mathbf{s}_2\cdots\mathbf{s}_n$,\\
          $\mathbf{d}=(d,2,\ldots, 2), d\geqslant3$ & &$\mathbf{s}_{n-1}\cdots\mathbf{s}_2\mathbf{s}_1\mathbf{s}_n\mathbf{s}_1\mathbf{s}_2\mathbf{s}_{n-1}$\\
          \hline
          $A_2,\mathbf d=(d,2),$ & \multirow{2}{*}{\dynkin[labels={1,2},labels*={d,}] A{*o}} & $\mathbf{s}_2\mathbf{s}_1\mathbf{s}_1\mathbf{s}_2$,\\
          $ d>3$ & &$\mathbf{s}_2\mathbf{s}_1\mathbf{s}_1\mathbf{s}_1\mathbf{s}_2$\\
          \hline
    \end{tabular}
    \caption{Primitive monomials for $k=3$}
    \label{TPrimTable}
\end{table}

\noindent \textit{Part 3}:
Now let $k=4$. For cases where the Coxeter graph has no edges of label
$4$ or more, the corresponding algebra $\nilhecke{<4}$ is identical to
$\nilhecke{<\infty}$, which has been analyzed in \cite{Kh2}. The only
remaining cases are $B_n$, $F_4$, $H_3$ and $H_4$ and $I_2(m)$ for
$m\geqslant 4$.

For $B_n$ with $n\geqslant 2$, using the notation in Theorem \ref{Bn},
we see that the signed permutations
\[
w' := [-n,-(n-1),\ldots,-1], \ \
w := [n-1,n-2,\ldots, 2,1,-n] \ \in \, W
\]
both correspond to right-primitive monomials $\mathbf{s}(w'),
\mathbf{s}(w) \in \nilhecke{<4}$ (i.e., $x\mathfrak m=0$ for $x =
\mathbf{s}(w'), \mathbf{s}(w)$). Further, one can check that $w',w$ are
self-inverses, which easily implies that $\mathbf{s}(w'), \mathbf{s}(w)$
are left-primitive as well (i.e., $\mathfrak m x=0$), as desired. We
tabulate the results for the remaining cases in Table
\ref{TPrimTable4}.\medskip

\begin{table}[ht]
    \centering
    \begin{tabular}{|c|c| >{\centering\arraybackslash}m{9cm}|}
    \hline
         Type & Dynkin diagram & Primitive monomials  \\
         \hline
          $F_4$ & \dynkin[labels={1,2,3,4},arrows=false] F{oooo} &
	  $\mathbf{s}_1\mathbf{s}_2\mathbf{s}_3\mathbf{s}_4\mathbf{s}_1\mathbf{s}_2\mathbf{s}_3\mathbf{s}_1\mathbf{s}_2\mathbf{s}_1$, \quad $\mathbf{s}_3\mathbf{s}_4\mathbf{s}_3\mathbf{s}_2\mathbf{s}_3\mathbf{s}_1\mathbf{s}_2\mathbf{s}_3\mathbf{s}_4\mathbf{s}_3$\\
          \hline
          $H_3$ & \dynkin[labels={1,2,3}] H{ooo}&
	  $\mathbf{s}_3\mathbf{s}_2\mathbf{s}_1\mathbf{s}_2\mathbf{s}_1\mathbf{s}_3\mathbf{s}_2\mathbf{s}_1\mathbf{s}_2\mathbf{s}_3$, \quad $\mathbf{s}_1\mathbf{s}_2\mathbf{s}_1\mathbf{s}_3\mathbf{s}_2\mathbf{s}_1\mathbf{s}_3\mathbf{s}_2\mathbf{s}_1\mathbf{s}_3\mathbf{s}_2\mathbf{s}_1$\\
          \hline
          $H_4$ & \dynkin[labels={1,2,3,4}] H{oooo}& $\mathbf{s}_4\mathbf{s}_3\mathbf{s}_2\mathbf{s}_1\mathbf{s}_2\mathbf{s}_1\mathbf{s}_3\mathbf{s}_2\mathbf{s}_1\mathbf{s}_4\mathbf{s}_3\mathbf{s}_2\mathbf{s}_1\mathbf{s}_3\mathbf{s}_2\mathbf{s}_1\mathbf{s}_3\mathbf{s}_2\mathbf{s}_1\mathbf{s}_4\mathbf{s}_3\mathbf{s}_2\mathbf{s}_1\mathbf{s}_2\mathbf{s}_3\mathbf{s}_4$, $\mathbf{s}_1\mathbf{s}_2\mathbf{s}_1\mathbf{s}_3\mathbf{s}_2\mathbf{s}_1\mathbf{s}_4\mathbf{s}_3\mathbf{s}_2\mathbf{s}_1\mathbf{s}_4\mathbf{s}_3\mathbf{s}_2\mathbf{s}_1\mathbf{s}_4\mathbf{s}_3\mathbf{s}_2\mathbf{s}_1\mathbf{s}_4\mathbf{s}_3\mathbf{s}_2\mathbf{s}_1$\\
          \hline
           $I_2(m)$ & \multirow{2}{*}{\dynkin[labels={1,2},gonality=m]I{oo}}  & $\mathbf{s}_1\mathbf{s}_2\mathbf{s}_1\cdots$ ($m-1$ generators), \\
           $(m\geqslant 4)$ & &$\mathbf{s}_2\mathbf{s}_1\mathbf{s}_2\cdots$ ($m-1$ generators)\\
          \hline
    \end{tabular}
    \caption{Primitive monomials for $k=4$}
    \label{TPrimTable4}
\end{table}

\noindent \textit{Part 4}:
First suppose $k=5$. For $I_2(m)$, a similar argument as before applies.
The cases of $H_3$ and $H_4$ are identical to those for $k=4$, and the
remaining cases are identical to those for $k=\infty$.
Next, for $6\leqslant k\leqslant \infty$, again the proof for $I_2(m)$
follows along similar lines as above, and the remaining cases are
identical to their corresponding $k=\infty$ analogues.\medskip

\noindent \textit{Part 5}:
It remains to prove the cases listed in Theorem \ref{TPrim} indeed yield
one-dimensional spaces of primitive elements. We consider each of the
cases separately: 
\begin{enumerate}
    \item In this case, $\nilhecke{<k}=\nilhecke{<\infty}$, and this is
    analyzed in \cite{Kh2}.
    \item The case where $W$ has type $A_1$ is easy to verify explicitly.
    Now suppose $W$ has type $B$: say it contains the vertices $v_1,
    v_2,\ldots, v_n$ on a path in that order, and $m_{v_{n-1},v_n}=4$.
    Then according to the results in \cite{Ha} and Theorem
    \ref{Tdimplus}, the non-zero monomials have one of the following
    forms:
    \begin{enumerate}
    \item[(i)] $\mathbf{s}_i\mathbf{s}_{i+1}\cdots \mathbf{s}_j$ or
    $\mathbf{s}_j\mathbf{s}_{j-1}\cdots \mathbf{s}_i$ for $1\leqslant
    i\leqslant j\leqslant n$. The first monomial can be right-multiplied
    by $\mathbf{s}_{j+1}$ if $j<n$ and by $\mathbf{s}_{n-1}$ if $j=n$
    without yielding zero, and thus is not primitive. A similar logic
    holds for monomials of the second form.
    \item[(ii)] $\mathbf{s}_i\mathbf{s}_{i+1}\cdots
    \mathbf{s}_{n-1}\mathbf{s}_n\mathbf{s}_{n-1}\cdots
    \mathbf{s}_{j+1}\mathbf{s}_j$. This can be left-multiplied by
    $s_{i-1}$ if $i>1$ and can be right-multiplied by $\mathbf{s}_{j-1}$
    if $j>1$, so the only case where this is primitive is $i=j=1$, and
    here it indeed yields a primitive monomial. 
    \end{enumerate}
    Since we have considered all possible monomials, this completes the
    proof.
    \item Again, in this case, we can list out the possible monomials.
    Suppose the vertices are $v_1,v_2,\ldots, v_n$ on a path, with
    $d_{v_n}=3$. The possible monomials are of the form:
    \begin{enumerate}
    \item[(i)] $\mathbf{s}_i\mathbf{s}_{i+1}\cdots \mathbf{s}_j$ or
    $\mathbf{s}_j\mathbf{s}_{j-1}\cdots \mathbf{s}_i$ for $1\leqslant
    i\leqslant j\leqslant n$. These can be eliminated as before.
    \item[(ii)] $\mathbf{s}_i\mathbf{s}_{i+1}\cdots
    \mathbf{s}_{n-1}\mathbf{s}_n^2\mathbf{s}_{n-1}\cdots
    \mathbf{s}_{j+1}\mathbf{s}_j$. As before, the only case where
    this is primitive is $i=j=1$, and that proves the claim.
    \end{enumerate}
    \item and (5) are both small finite cases. The case of $H_3$ can be
    verified computationally, while the other two are simple enough for
    manual verification.
    \item[(6)] Here, the algebra is simply $\bk[\mathbf s]/\langle
    \mathbf s^{d_i}\rangle$, which has only one primitive monomial:
    $\mathbf s^{d_i-1}$.
\end{enumerate}
This concludes our proof.
\end{proof}

Next, we enumerate the cases where $\nilhecke{<k}$ has a one-dimensional
space of right-primitive elements.

\begin{theorem}\label{TRightPrim}
Fix a Coxeter group $W$ with related data $I,J,{\bf S},\R$, integers $d_i
\geqslant 2\ \forall i$ and $1 \leqslant k \leqslant \infty$.
Suppose the corresponding nil-Hecke $\bk$-algebra $\nilhecke{<k}$ is
finite-dimensional. Then the set of right-primitive elements in
$\nilhecke{<k}$ has dimension one if and only if one of the following
holds:
\begin{enumerate}
\item $d_i=2$ for all $i$, and $W$ is a finite Coxeter group that
contains no braid relations of length $k$ or more;
\item $W$ is of type $A_1$.
\end{enumerate}
\end{theorem}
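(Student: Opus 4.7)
The plan is to combine Theorem~\ref{TPrim} with the containment ``primitive $\subseteq$ right-primitive.'' First I would show that the primitive subspace is always nonzero whenever $\nilhecke{<k}$ is finite-dimensional: given any basis monomial $\mathbf{s}_w$ of maximal length in the word basis of Theorem~\ref{Tdimplus}, appending a generator $\mathbf{s}_i$ on either side produces a word of length $\ell(w)+1$ that must vanish, either because the resulting class lies outside the word basis (forbidden by maximality) or because, after commutations, it contains a substring $\mathbf{s}_i^{d_i}$ or a braid word of length $\geqslant k$. Hence $\mathbf{s}_w$ is primitive, so if right-primitive is one-dimensional then so is primitive, reducing the problem to the six cases of Theorem~\ref{TPrim}.

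Next, I would verify that the two cases stated in Theorem~\ref{TRightPrim} do have a one-dimensional right-primitive space. In case~(1), where $d_i=2$ for all $i$ and $W$ is finite with no braid relation of length $\geqslant k$, the algebra coincides with $NC_W((2,\dots,2))$, in which $\mathbf{s}_w \mathbf{s}_i = \mathbf{s}_{w s_i}$ if $\ell(w s_i) > \ell(w)$ and $0$ otherwise; consequently $\sum c_w \mathbf{s}_w$ is right-primitive iff it is supported on the unique longest element $w_0$, giving a one-dimensional space. In case~(2), type $A_1$, the algebra is $\bk[\mathbf{s}]/(\mathbf{s}^d)$, and its right-primitive subspace is clearly $\bk\, \mathbf{s}^{d-1}$.

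The remaining task is to rule out cases (2), (3), (4), (5) of Theorem~\ref{TPrim} (in the subcases not covered above) by producing, in each, a right-primitive element that is not primitive. For type $A_n$ with $\mathbf{d}=(3,2,\dots,2)$ and $k\in\{1,2\}$ (case (3) of Theorem~\ref{TPrim}; specializing to case (5) when $n=2$, $k=3$), the element $x = \mathbf{s}_1^2 \mathbf{s}_2 \cdots \mathbf{s}_n$ works: right-multiplication by any $\mathbf{s}_j$ triggers, after commuting through the chain, either a square or a length-$3$ braid substring $\mathbf{s}_{j-1}\mathbf{s}_j\mathbf{s}_{j-1}$ (respectively $\mathbf{s}_1\mathbf{s}_2\mathbf{s}_1$ when $j=1$), whereas $\mathbf{s}_2 x$ contains no forbidden substring and so is nonzero. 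For type $B_n$ with $k \in \{1,2\}$ (case (2); generators $\mathbf{s}_0, \dots, \mathbf{s}_{n-1}$, $m_{01} = 4$), the element $x = \mathbf{s}_1 \mathbf{s}_0 \mathbf{s}_1 \mathbf{s}_2 \cdots \mathbf{s}_{n-1}$ serves analogously, with the length-$4$ braid $\mathbf{s}_1\mathbf{s}_0\mathbf{s}_1\mathbf{s}_0$ taking the place of $\mathbf{s}_1\mathbf{s}_2\mathbf{s}_1$ in the $j=0$ case. In each instance the verifications are routine applications of the word-basis criterion of Theorem~\ref{Tdimplus}.

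The principal obstacle is case~(4), namely $H_3$ with $k=3$, where no uniform ``chain'' construction is available. I would propose the length-$5$ element $x = \mathbf{s}_1 \mathbf{s}_2 \mathbf{s}_1 \mathbf{s}_2 \mathbf{s}_3$: the product $x \mathbf{s}_1$ reduces via commutations to a monomial containing the (killed) length-$5$ braid $\mathbf{s}_1\mathbf{s}_2\mathbf{s}_1\mathbf{s}_2\mathbf{s}_1$; $x \mathbf{s}_2$ contains the length-$3$ braid $\mathbf{s}_2\mathbf{s}_3\mathbf{s}_2$; and $x \mathbf{s}_3 = 0$ by the square relation. Non-primitivity follows because $\mathbf{s}_3 x = \mathbf{s}_1 \mathbf{s}_3 \mathbf{s}_2 \mathbf{s}_1 \mathbf{s}_2 \mathbf{s}_3$ (after commuting $\mathbf{s}_3$ past $\mathbf{s}_1$) is a nonzero fully commutative word in $NTL_{H_3}$. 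The subtlety here --- simultaneously triggering the length-$5$ braid, the length-$3$ braid, and a square from the right, while preserving a nonzero left-extension --- is what makes the $H_3$ case the most delicate step of the proof.
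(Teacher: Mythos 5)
Your overall architecture coincides with the paper's: observe that the primitive subspace sits inside the right-primitive one and is nonzero (via a maximal-length basis monomial of Theorem~\ref{Tdimplus}), reduce to the six cases of Theorem~\ref{TPrim}, confirm one-dimensionality in the two surviving cases, and dismiss the rest. The one real difference is the dismissal mechanism: you produce a single right-primitive element that fails to be left-primitive and then invoke the one-dimensionality of the primitive space from Theorem~\ref{TPrim} to force $\dim \geqslant 2$, whereas the paper simply exhibits two distinct right-primitive monomials in each case --- a self-contained check that does not lean on the exact dimension count of Theorem~\ref{TPrim}. (In the $H_3$ case your witness $x=\mathbf{s}_1\mathbf{s}_2\mathbf{s}_1\mathbf{s}_2\mathbf{s}_3$ and the nonzero product $\mathbf{s}_3x$ are precisely the paper's two right-primitive monomials, so that ``delicate'' step is the same computation in disguise.)

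There is, however, a concrete gap in your type-$B$ argument for $k\in\{1,2\}$: the witness degenerates at $n=2$. For $W=W(B_2)=W(I_2(4))$ your element collapses to $x=\mathbf{s}_1\mathbf{s}_0\mathbf{s}_1$, which is killed on the left by \emph{both} generators ($\mathbf{s}_1x$ contains a square, and $\mathbf{s}_0x=\mathbf{s}_0\mathbf{s}_1\mathbf{s}_0\mathbf{s}_1$ is the killed length-$4$ braid word), so $x$ is primitive; moreover there is no $\mathbf{s}_2$ with which to certify non-left-primitivity. In fact in this algebra the right-primitive monomials are exactly $\mathbf{s}_0\mathbf{s}_1\mathbf{s}_0$ and $\mathbf{s}_1\mathbf{s}_0\mathbf{s}_1$, and both are two-sided primitive, so \emph{no} right-primitive-but-not-primitive element exists and your strategy cannot be repaired within this case --- you must instead write down these two monomials directly, as the paper does for $n=2$. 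This is also the instance that shows why the paper's two-monomial method is the more robust route: it succeeds whether or not the primitive space is exactly one-dimensional, whereas your argument is hostage to the precise statement of Theorem~\ref{TPrim} in a case where (as the pair of primitive monomials above indicates) that count is itself delicate. The remaining verifications (the chain elements $\mathbf{s}_1^2\mathbf{s}_2\cdots\mathbf{s}_n$ in type $A$, $\mathbf{s}_1\mathbf{s}_0\mathbf{s}_1\mathbf{s}_2\cdots\mathbf{s}_{n-1}$ in type $B_n$ for $n\geqslant 3$, and the two one-dimensional cases) are correct as you state them.
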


\begin{proof}
Clearly if the space of right-primitive elements is one-dimensional, so
is its subspace of primitive elements. Thus we only need to consider the
cases listed in Theorem \ref{TPrim}. We check each of them separately:

\begin{enumerate}
\item In this case, we do have a one-dimensional set of right-primitive
elements, as seen in \cite{Kh2}.

\item If $W$ is of type $B_n$ with $n \geq 3$, label the vertices as in
the proof of Theorem \ref{TPrim}, part~5. Then
$\mathbf{s}_2\mathbf{s}_1 \neq \mathbf{s}_3\mathbf{s}_2\mathbf{s}_1$ are
two right-primitive monomials. On the other hand, if $n=2$, then
$\mathbf{s}_1\mathbf{s}_2\mathbf{s}_1$ and
$\mathbf{s}_2\mathbf{s}_1\mathbf{s}_2$ are two distinct right-primitive
monomials. This leaves us with the case of $A_1$, which can be checked to
have a one-dimensional space of right-primitive elements as at the end of
the proof of Theorem \ref{TPrim}.

\item Again, if $W$ has three or more vertices, then with the notation
from the proof of Theorem \ref{TPrim}, as before we see that
$\mathbf{s}_2\mathbf{s}_1$ and $\mathbf{s}_3\mathbf{s}_2\mathbf{s}_1$ are
two distinct right-primitive monomials. If $W$ has two nodes, then
suppose $v_1$ has order $d\geqslant 3$. Then $\mathbf{s}_1\mathbf{s}_2$
and $\mathbf{s}_1^2\mathbf{s}_2$ are two right-primitive monomials.

\item If $W$ is of type $H_3$, suppose the vertices are $v_1,v_2,v_3$ in
that order with $m_{v_1,v_2}=5$. Then
$\mathbf{s}_1\mathbf{s}_2\mathbf{s}_1\mathbf{s}_2\mathbf{s}_3$ and
$\mathbf{s}_3\mathbf{s}_1\mathbf{s}_2\mathbf{s}_1\mathbf{s}_2\mathbf{s}_3$
are both right-primitive.

\item Suppose the two vertices are $v_1,v_2$ with $d_1=3$ and $d_2=2$.
Then $\mathbf{s}_1\mathbf{s}_2$ and
$\mathbf{s}_1\mathbf{s}_1\mathbf{s}_2$ are right-primitive.

\item This case gives a commutative algebra, so all right-primitive
elements are primitive. \qedhere
\end{enumerate}
\end{proof}

With a bulk of the work done in the proofs of the above theorems, we
conclude by showing the final outstanding main result.

\begin{proof}[Proof of Theorem \ref{TFrobenius}]
The equivalence $(2)\iff (4)$ follows from Theorem \ref{TRightPrim}. Let
us now prove $(1)\implies (2)$ by extending the argument used to show
\cite[Theorem 5.2]{Kh2}.

Suppose $\nilhecke{<k}$ is Frobenius; so there exists a nondegenerate
invariant bilinear form $\sigma$ on $\nilhecke{<k}$. Now for each
non-zero primitive $p$, there is $a_p\in\nilhecke{<k}$ so that $0\neq
\sigma(p,a_p)=\sigma(pa_p,1)$. Now if $a_p\in\mathfrak m$, $pa_p=0$, so
one may assume $a_p=1$ for all $p$. Now the linear functional
$\sigma(-,1)$ gives an injective homomorphism from the set of
right-primitive elements to $\bk$: indeed, if we have $a\neq b$, both
right-primitive, so that $\sigma(a,1)=\sigma(b,1)\iff \sigma(a-b,1)=0$,
then we claim that $\sigma(a-b,c)=0$ for all $c\in\nilhecke{<k}$.
By linearity, it suffices to prove this for $c=1$ and $c\in\mathfrak m$.
If $c=1$, the conclusion is clear; if $c\in\mathfrak m$,
$\sigma(a-b,c)=\sigma(ac-bc,1)=\sigma(0,1)=0$ since $a,b$ are
right-primitive. This contradicts the non-degeneracy of $\sigma$, and
thus injectivity must hold.

Now this clearly implies the space of right-primitive elements is at most
one-dimensional. Since it has dimension at least one (for example, the
longest non-zero monomial is necessarily right-primitive), the dimension
is exactly one.

We now show $(4)\implies (1)$. Indeed, in the first case in (4),
$\nilhecke{<k}=\nilhecke{<\infty}$ is the usual nil-Coxeter algebra over
$W$, and it is Frobenius by \cite{Kho}. In the second case in (4),
$\nilhecke{<k} = \bk[\mathbf s]/\langle \mathbf s^d\rangle$ was shown to
be Frobenius in \cite{Kh2}, e.g.\ use the bilinear form $\sigma$ obtained
via $\sigma(\mathbf s^i,\mathbf s^j)=\mathbf{1}(i+j=d-1)$.

It remains to show $(2)\iff (3)$. The direction $(3)\implies (2)$ is
clear, so assume $(2)$. Since we have shown $(2)\iff (4)$, one simply
needs to verify $(3)$ holds in the cases listed under $(4)$. The first
case is handled in \cite{Kh2}, while the second case is trivial to check.
The proof is now complete.\end{proof}



\appendix
\section{Sage codes}
Here, we include the \textit{Sage} programs for verifying some of the
results that were obtained computationally.

The following calculates $\dim\nilhecke{<4}$ for $F_4,H_3$ and $H_4$. To
increase efficiency, this algorithm uses the fact that the set of all the
group elements corresponding to the monomials to be enumerated form a
weak order ideal.

\begin{lstlisting}[breaklines]
F4,H3,H4=WeylGroup(['F',4]),CoxeterGroup(['H',3],implementation="coxeter3"),CoxeterGroup(['H',4],implementation="coxeter3")
checkF4=lambda w:all(['2323' not in "".join([str(i) for i in x]) for x in w.reduced_words()])
checkH3=lambda w:all(['23232' not in "".join([str(i) for i in x]) for x in w.reduced_words()])
checkH4=lambda w:all(['34343' not in "".join([str(i) for i in x]) for x in w.reduced_words()])
I1,I2,I3=F4.weak_order_ideal(predicate=checkF4),H3.weak_order_ideal(predicate=checkH3),H4.weak_order_ideal(predicate=checkH4)
print(I1.cardinality(),I2.cardinality(),I3.cardinality())
\end{lstlisting}

The following code returns a list of all primitive monomials in
$\nilhecke{<k}$ for $k=3$, $W=H_3$, thereby proving the space of such
elements is one-dimensional.
\begin{lstlisting}[breaklines]
W=CoxeterGroup(['H',3],implementation="coxeter3")
s=W.simple_reflections()
checkH3_FC=lambda w:all(['23232' not in "".join([str(i) for i in x]) for x in w.reduced_words()]) and all(['121' not in "".join([str(i) for i in x]) for x in w.reduced_words()])
FClist=[w for w in W if checkH3_FC(w)]
def primitive(w):
    l=w.length()
    return all([not checkH3_FC(w*i) or (w*i).length()<l for i in s]) and all([not checkH3_FC(i*w) or (i*w).length()<l for i in s])
for w in FClist:
    if primitive(w):
        print(w)
\end{lstlisting}
\end{document}